\documentclass[final,12pt]{amsart}

\usepackage{amsmath,amssymb,esint,mathrsfs}
\usepackage{graphicx,xcolor,microtype,needspace,comment}
\usepackage[final]{hyperref}
\allowdisplaybreaks[4]
\hypersetup{hidelinks,
    pdftitle={Strong uniqueness for complete Ricci flows}
}
\theoremstyle{remark}
\numberwithin{equation}{section}
\newtheorem{theorem}{Theorem}[section]

\newtheorem{lemma}[theorem]{Lemma}
\newtheorem{proposition}[theorem]{Proposition}
\newtheorem{corollary}[theorem]{Corollary}

\DeclareMathOperator{\Ric}{Ric}
\DeclareMathOperator{\Rm}{Rm}
\DeclareMathOperator{\diver}{div}
\DeclareMathOperator{\grad}{grad}
\DeclareMathOperator{\supp}{supp}
\DeclareMathOperator{\tr}{tr}
\DeclareMathOperator{\inj}{inj}
\DeclareMathOperator{\Vol}{Vol}
\DeclareMathOperator{\VolB}{VolB}
\DeclareMathOperator{\Area}{Area}

\author[S. Huang]{Shaochuang Huang}
\address[Shaochuang Huang]{School of Science, Shenzhen Campus of Sun Yat-sen
        University, No. 66, Gongchang Road, Guangming District, Shenzhen,
        Guangdong 518107, P. R. China.}
\email{huangshch23@mail.sysu.edu.cn}
\thanks{S. Huang is partially supported by a regular fund from Shenzhen Science
        and Technology Program No. JCYJ20240813151005007 and a start-up
        fund from SYSU}

\author[Z. Peng]{Zhuo Peng}
\address[Zhuo Peng]{School of Science, Shenzhen Campus of Sun Yat-sen
        University, No. 66, Gongchang Road, Guangming District, Shenzhen,
        Guangdong 518107, P. R. China.}
\email{pengzh55@mail2.sysu.edu.cn}

\begin{document}
\title[Pseudolocality and strong uniqueness]{Pseudolocality and strong
        uniqueness for complete Ricci flows}
\subjclass[2020]{Primary 53E20; Secondary 58J35}
\keywords{Ricci flow, pseudolocality, strong uniqueness, entropy, heat kernel}

\begin{abstract}
    In this paper, we prove a uniform pseudolocality theorem for smooth
            complete Ricci flows whose initial metrics have Ricci curvature
            bounded below and uniformly positive injectivity radius.
    No a priori curvature bound is assumed on the flow.
    As a consequence, we obtain strong uniqueness for smooth complete Ricci
            flows from a complete metric with the above geometric bounds.
    We also prove that every smooth complete Ricci flow from a complete
            Ricci-flat metric with uniformly positive injectivity radius is
            stationary.
    The same conclusion holds for any complete flat initial metric.
    In particular, we give an affirmative answer to Chen's question on the
            strong uniqueness of Ricci flow from Euclidean space in every
            dimension \cite{Chen09JDG}.
\end{abstract}

\maketitle

\section{Introduction}

The Ricci flow, introduced by Hamilton \cite{Hamilton82JDG}, evolves
        a Riemannian metric by
\[
    \partial_tg=-2\Ric_g.
\]
A basic problem on a complete noncompact manifold is to control the curvature of
        the flow using geometric information about the initial metric.
Pseudolocality provides such control when the initial geometry is sufficiently
        close to Euclidean geometry at a suitable scale.
In this paper, we prove a uniform pseudolocality theorem for smooth complete
        Ricci flows whose initial metrics have Ricci curvature bounded from below and
        uniformly positive injectivity radius.
No a priori curvature bound is assumed on the evolving metrics, and no almost
        Euclidean isoperimetric or volume condition is assumed  on
        the initial metric.
Strong uniqueness follows as a consequence.
The main analytic ingredient is the monotonicity of Perelman's
        $\mathcal W$-entropy and $\mu$-entropy for smooth complete Ricci flows under
        an initial scalar curvature lower bound.

Perelman's pseudolocality theorem \cite[Section~10]{Perelman02arXiv} shows that on a compact manifold,
        an initial region with a scalar curvature lower bound and an almost
        Euclidean isoperimetric inequality controls curvature of the flow for a
        definite short time.
Chau, Tam and Yu \cite{CTY11CJM} established the complete noncompact version for
        flows with bounded curvature.
Tian and Wang \cite{TW15JAMS} proved a version based on almost nonnegative
        initial Ricci curvature and almost Euclidean volume, while Wang
        \cite{Wang18CambJ,Wang20arXiv} developed local entropy methods for
        noncollapsing and pseudolocality.
Related forms of pseudolocality arise from the 
        $\varepsilon$-regularity theorems of Hein and Naber \cite{HN14CPAM} and
        Bamler \cite{Bamler20arXiv}, which use pointed Nash entropy and pointed
        $\mathcal W$-entropy close to their Euclidean values to obtain local
        curvature control.
Chan, Ma and Zhang \cite{CMZ23JFA} make this connection explicit by combining
        local entropy estimates with Bamler's theorem to derive pseudolocality.

Pseudolocality also plays an important role in constructing Ricci flows on
        noncompact manifolds from initial metrics with possibly
        unbounded curvature.
One constructs suitable approximating flows using Shi's existence theorem
        \cite{Shi89JDG} and then combines uniform local curvature estimates with
        compactness to pass to a limit with the
        prescribed initial metric.
Topping \cite{Topping10JEMS} first used pseudolocality to prove a compactness
        theorem and applied it to existence on surfaces.
Further constructions under various lower curvature bounds and noncollapsing
        assumptions appear, for example, in
        \cite{CRW15JEMS,Hochard16,BCW19Inv,Lai19AdvMath,LT20JDG,ST21GT}.
For constructions using almost Euclidean geometry or small curvature
        concentration, see \cite{Huang19JGA,Wang20arXiv,He24CAG,CHL24AnnPDE,Cheng25Adv,LeeLee} for example. The foregoing list is illustrative rather than exhaustive.

Our concern is curvature control for every smooth complete solution from a
        given initial metric.
The standard complete noncompact pseudolocality theorem assumes bounded
        curvature along the flow.
Some later versions permit curvature to become unbounded as $t\to0^+$ but
        require global curvature bounds on compact positive-time intervals, see
       \cite[Section~2, (2.2)]{Wang20arXiv}  for example.
Results beyond these classes include Chen's local curvature estimates in
        dimensions two and three under suitable initial curvature assumptions
        \cite{Chen09JDG}, Simon's local smoothing theorem in
        dimension two \cite{Simon13GT} and the pseudolocality theorem of Cheng
        and Zhang \cite{CZ25AnnPDE} for $\mathcal L$-complete flows.
$\mathcal L$-completeness of the flow requires existence of minimizing $\mathcal L$-geodesics
        and uniform compact containment of all such minimizers for fixed
        endpoint times and endpoints in a compact set.
Our main theorem below applies in every dimension under the stated initial
        geometric bounds, with neither an a priori global curvature bound on the
        flow nor an additional $\mathcal L$-completeness assumption.
We only require smoothness and completeness of the flow.
Here and below, Ricci flows are smooth on their stated space-time domains,
        locally in space and up to every included time endpoint.
Completeness is assumed for every time slice, including the initial slice.

\begin{theorem}[Uniform pseudolocality]
    \label{thm:main-curvature}
    Given $n\ge2$, $\alpha>0$, $K\ge0$ and $i_0>0$, there is a constant
            $\hat{T}(n,\alpha,K,i_0)>0$ with the following property.
    Let $(M^n,g_0)$ be a smooth complete Riemannian manifold satisfying
    \begin{equation}
        \label{eq:initial-geometry}
        \Ric_{g_0}\ge-K,
        \qquad \inj_{g_0}(x)\ge i_0\quad\text{for every}\quad x\in M.
    \end{equation}
    Then any smooth complete Ricci flow $g(t)$ on $[0,T]$ with
            $g(0)=g_0$ satisfies
    \begin{equation}
        \label{eq:uniform-curvature}
        |\Rm|_{g(t)}\le\frac{\alpha}{t}
        \quad\text{on}\quad M\times
        (0,\min\{\hat{T},T\}].
    \end{equation}
\end{theorem}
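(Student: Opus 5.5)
We argue by contradiction, combining a point-picking/blow-up argument with Bamler's $\varepsilon$-regularity theorem, and feeding it the entropy monotonicity advertised in the introduction. Suppose the statement fails for some $(n,\alpha,K,i_0)$. Then there are manifolds $(M_k,g_{0,k})$ satisfying \eqref{eq:initial-geometry}, complete flows $g_k(t)$ on $[0,T_k]$, times $t_k\to0$ and points $x_k$ with $|\Rm|_{g_k(t_k)}(x_k)>\alpha/t_k$.

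\textbf{Step 1 (scaling).} Rescale parabolically by $t_k^{-1}$, so that the bad time becomes $t=1$. The rescaled initial metrics $\tilde g_{0,k}=t_k^{-1}g_{0,k}$ satisfy $\Ric\ge-Kt_k\to0$ and $\inj\ge i_0t_k^{-1/2}\to\infty$. By Cheeger--Gromov compactness in the $C^{1,\beta}$/harmonic-radius sense (Anderson's theorem, valid under $|\Ric|$ bounds), or more robustly by volume comparison plus Croke's estimate, the rescaled initial metrics are \emph{almost Euclidean at every scale up to $\sim i_0t_k^{-1/2}$}. Concretely, Bishop--Gromov gives $\Vol B(x,r)\le(1+o(1))\omega_nr^n$, and the injectivity radius bound gives a uniform lower bound. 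To obtain the near-Euclidean lower bound we need a stronger statement: we use that Perelman's $\mu$-functional $\mu(g_0,\tau)$ for $\tau\le$ scale is controlled from below by the log-Sobolev inequality. This is where the injectivity radius and Ricci lower bound enter, via the local log-Sobolev inequality on balls of size $\ll$ inj radius. Since $\Ric\ge -Kt_k$ and inj is huge, the rescaled metric on balls of radius $R$ is $C^{0}$-close to Euclidean in harmonic coordinates (Anderson--Cheeger), giving $\mu(\tilde g_{0,k},\tau)\ge -\delta_k$ for $\tau\in(0,2]$ with $\delta_k\to0$.

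\textbf{Step 2 (entropy monotonicity without curvature bounds).} This is the main obstacle. We need $\mu(g(t),\tau-t)\ge\mu(g_0,\tau)$ for complete flows with no curvature bound, only an initial scalar curvature lower bound; this is the paper's stated main analytic ingredient, and I would prove or assume it. The plan is to solve the conjugate heat equation backward from a compactly supported $u$ at time $t$, using cutoff functions built from a distance-like function whose evolution is controlled by the scalar curvature lower bound. This bound is preserved, giving $R\ge -C$ for all $t$ by the maximum principle on complete flows (Chen's argument). The cutoff errors are then shown to vanish by integrating Perelman's pointwise Harnack quantity against the cutoff. Granting this, the pointed $\mathcal W$-entropy (equivalently the Nash entropy) at $(x_k,1)$ satisfies $\mathcal N_{x_k,1}(\tau)\ge-\delta_k$ for $\tau\le 1$.

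\textbf{Step 3 (regularity).} Bamler's $\varepsilon$-regularity theorem, which is local and needs only completeness in the form used by Hein--Naber, then gives $|\Rm|(x_k,1)\le\varepsilon^{-2}r^{-2}$ whenever the Nash entropy at scale $r^2$ exceeds $-\varepsilon$. Taking $r$ fixed, we can only reach the bound $C$, not $\alpha$. To reach $\alpha$, we apply the theorem at scale $r^2=1/\,$(large constant) with a Nash entropy bound improved further toward zero. Alternatively, we pass to the limit and observe that a flow with entropy identically $0$ is Euclidean, a contradiction. Since Bamler's theorem itself may assume bounded curvature on compact time intervals, I would localize it by using the Hein--Naber version, or by running a Perelman-style point selection: choose $(\bar x_k,\bar t_k)$ with $|\Rm|\le 4|\Rm|(\bar x_k,\bar t_k)$ on a backward parabolic neighbourhood. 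This yields locally bounded-curvature pieces on which the local entropy argument applies, and the heat kernel estimates come from Step 2 together with the monotonicity.
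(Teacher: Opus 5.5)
Your architecture (an initial small-scale entropy bound from Anderson--Cheeger charts, propagation of $\mu$ along the flow, then a blow-up contradiction) matches the paper's. However, the two steps that carry the weight are not supplied.

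\textbf{Step~2.} This is the paper's main analytic contribution (Corollary~\ref{cor initial entropy propagation}). If you assume it, you have assumed the heart of the matter. If you try to prove it, the route you sketch fails:
\begin{itemize}
\item A lower bound on $\mathcal R$ controls neither $\partial_t d_{g(t)}$ (governed by $\Ric$ along minimizing geodesics, so it needs an upper Ricci bound) nor $\Delta_{g(t)}d$. Hence there is no cutoff $\psi$ with $\Box\psi$ bounded.
\item Integrating Perelman's Harnack quantity $v$ against $\psi$ leaves errors such as $\int v\,\Box\psi$. These require $L^1$ control of $u|\nabla\log u|^2$ and $u\,\Delta\log u$, yet even finiteness of $F(t)=[\mathcal R+|\nabla\log u|^2]_t$ is unknown a priori.
\item Perelman's $v\le0$ is a heat-kernel statement known only under compactness or bounded curvature.
\end{itemize}
The paper never differentiates under time-dependent cutoffs. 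It uses:
\begin{itemize}
\item the minimal conjugate solution built by Dirichlet exhaustion, with conservation of mass from a quadratic maximum principle;
\item Lemma~\ref{lem positive comparison}, where the boundary terms have a sign, together with Chen's bound $\mathcal R\ge-K_*$;
\item the representation $I(s)=\sup_Z[2\diver Z-|Z|^2]_s$, with the test field evolved by $\partial_tX=\Delta X+\Ric(X)$. Then $|X|$ stays bounded and $J_X=[\mathcal R+2\diver X-|X|^2]_t$ is nondecreasing.
\end{itemize}
Taking the supremum gives finiteness and monotonicity of $F$ and $dF\ge\frac2nF^2\,dt$. Bounded regularizations of $\log(U/u)$ give $H(t)-H(s)=-\int_s^tF$.

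Step~1 also skips the passage from chart-wise bounds to a global $\mu$ bound. Lemma~\ref{lem uniform small entropy} does this with $\sum_j\eta_j^2=1$, bounded overlap, and Jensen's inequality for $\sum_jq_j\log q_j$; that part is routine.

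\textbf{Step~3.} You do not commit to an argument that works.
\begin{itemize}
\item Bamler's and Hein--Naber's $\varepsilon$-regularity rest on heat-kernel theory for conjugate kernels from $\delta$-data, developed for compact or bounded-curvature flows. That is exactly what is not assumed here. Monotonicity for compactly supported terminal data also does not directly cover $\delta$-data.
\item At scale $r^2\le1$ the theorem only gives $|\Rm|(x_k,1)\le\varepsilon^{-2}r^{-2}$, a fixed large constant. Shrinking $r$ makes this worse, so an arbitrary $\alpha$ needs compactness plus rigidity.
\end{itemize}
Your fallback (pass to a limit) is the paper's route, but its ingredients are missing:
\begin{itemize}
\item \emph{Point selection.} It must produce curvature bounds on \emph{fixed-time} balls whose radius, in units of $\bar Q_i^{-1/2}$, tends to infinity. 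The paper maximizes $t|\Rm|_{\tilde g_i(t)}[1-d_{\tilde g_i(t)}(p_i,\cdot)]_+^2$; a maximum exists because the moving unit balls stay in a compact set. It then converts the moving-ball bound into one on $B_{\tilde g_i(\bar t_i)}(\bar p_i,A_i\bar Q_i^{-1/2})\times[\bar t_i-\frac\alpha4\bar Q_i^{-1},\bar t_i]$ via the Simon--Topping shrinking balls lemma.
\item \emph{Noncollapsing.} You never address it at the picked points, and without it Hamilton's compactness theorem cannot be applied. The paper obtains it from $\nu(g_i(0),1)\ge-1$ and the local curvature bound, via Wang's volume estimate and Cheeger--Gromov--Taylor.
\item \emph{Limit and rigidity.} The entropy bound must be passed to the limit, giving $\mu(g_\infty(0),\tau)\ge0$ for all $\tau$ via local entropy and domain monotonicity. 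A rigidity theorem is then needed to force flatness. The paper uses Cheng's theorem; your claim that ``a flow with entropy identically zero is Euclidean'' is asserted without proof.
\end{itemize}
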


The initial conditions in \eqref{eq:initial-geometry} are uniform over $M$, and
        Theorem~\ref{thm:main-curvature} gives a corresponding uniform
        form of pseudolocality.
The time $\hat T$ depends only on $n,\alpha,K,i_0$.
Since $\alpha>0$ is arbitrary, every such flow satisfies
\[
    \lim_{t\to0^+}t\sup_M|\Rm|_{g(t)}=0.
\]
In particular, the initial geometry forces every smooth complete solution into
        the curvature class needed for uniqueness.

Chen and Zhu \cite{CZ06JDG} proved uniqueness for complete Ricci flows with
        uniformly bounded curvature, and Kotschwar \cite{Kotschwar14CAG}
        developed an energy approach to uniqueness and its extensions.
Lee \cite{Lee25arXiv} proved uniqueness for the class of complete solutions with
        curvature bounded by $C/t$ and without smallness restriction on $C$.
Following Chen \cite{Chen09JDG}, we use \emph{strong uniqueness} to mean
        uniqueness among smooth complete solutions without an a priori curvature
        bound or any other assumptions on the evolving metrics.
Chen established strong uniqueness results in dimensions two and three, and
        Topping \cite{Topping15GT} proved strong uniqueness for instantaneously
        complete flows on arbitrary surfaces.
Cheng and Zhang \cite{CZ25AnnPDE} proved stationarity from the Euclidean
        metric under $\mathcal L$-completeness.
Related strong uniqueness results for a class of instantaneously complete
        Chern-Ricci flows were obtained by the authors in \cite{HP26arXiv}.
By applying Theorem~\ref{thm:main-curvature} together with Lee's uniqueness theorem
        \cite{Lee25arXiv}, we obtain the following strong uniqueness for smooth
        complete Ricci flows.

\begin{corollary}[Strong uniqueness]
    \label{cor:uniqueness}
    Let $(M^n,g_0)$ be complete and satisfy \eqref{eq:initial-geometry} for
            $n\ge2$, $K\ge0$ and $i_0>0$.
    If $g(t)$ and $\widetilde g(t)$ are smooth complete Ricci flows on $[0,T]$
            with $g(0)=\widetilde g(0)=g_0$, then
    \[
        g(t)=\widetilde g(t)
        \qquad\text{for }0\le t\le
            \min\{\hat{T}(n,1,K,i_0),T\}.
    \]
\end{corollary}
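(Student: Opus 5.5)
The corollary follows by combining Theorem~\ref{thm:main-curvature} with Lee's uniqueness theorem \cite{Lee25arXiv}. Set $T_1=\min\{\hat T(n,1,K,i_0),T\}$. Apply Theorem~\ref{thm:main-curvature} with $\alpha=1$ to each of the two flows separately. Both $g(t)$ and $\widetilde g(t)$ are smooth complete Ricci flows on $[0,T]$ starting from $g_0$, and $g_0$ satisfies \eqref{eq:initial-geometry}. Hence
\[
    |\Rm|_{g(t)}\le\frac1t,\qquad |\Rm|_{\widetilde g(t)}\le\frac1t
    \qquad\text{on } M\times(0,T_1].
\]
The key point is that $\hat T$ depends only on $(n,\alpha,K,i_0)$ and not on the flow. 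Therefore the same time interval $(0,T_1]$ works for both solutions, and no a priori curvature assumption on either flow is needed to obtain these bounds.

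Next I would check that both flows, restricted to $[0,T_1]$, lie in the class covered by Lee's theorem. That class consists of smooth complete solutions with $|\Rm|\le C/t$ for some constant $C$, with no smallness required on $C$, agreeing at $t=0$. Completeness of every time slice is part of our hypotheses. Smoothness up to $t=0$ gives the common initial metric $g(0)=\widetilde g(0)=g_0$ in the sense Lee requires. Lee's theorem then gives $g(t)=\widetilde g(t)$ for $t\in[0,T_1]$.

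The main (mild) obstacle is matching the precise hypotheses of \cite{Lee25arXiv}. If that theorem requires a curvature bound on all of $[0,T]$ rather than on a subinterval, I would apply it to the restricted flows on $[0,T_1]$, which are themselves complete Ricci flows on a closed interval. If it requires additional conditions, such as bounded curvature of $g_0$ or a specific form of convergence to the initial data, I would verify them from the available information. Smoothness of the flow on $M\times[0,T_1]$ gives local smooth convergence as $t\to0^+$. The curvature bound $1/t$ also gives uniform equivalence of metrics in the forms typically used, since integrating $|\Ric|\le C/t$ alone does not; these are the inputs from which any extra hypothesis would have to be checked.
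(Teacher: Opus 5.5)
This is essentially the paper's proof: Theorem~\ref{thm:main-curvature} with $\alpha=1$ puts both flows in the class $|\Rm|\le 1/t$ on the common interval $(0,\min\{\hat T(n,1,K,i_0),T\}]$, and Lee's theorem \cite[Theorem~1.1]{Lee25arXiv} then gives $g=\widetilde g$. The paper adds one detail you omit: it uses Lee's theorem only for noncompact $M$ and handles compact $M$ by Hamilton's classical uniqueness \cite{Hamilton82JDG}. Separately, your closing aside is muddled, since $|\Rm|\le 1/t$ alone does not give uniform equivalence of $g(t)$ and $g_0$ as $t\to0^+$, but nothing in the argument needs it.
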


As a consequence, we have the following stationarity result for smooth
        complete Ricci flows.

\begin{corollary}
    \label{cor:ricci-flat}
    Let $(M,g_0)$ be complete and Ricci-flat, and suppose that
            $\inf\limits_M\inj_{g_0}>0$.
    Every smooth complete Ricci flow $g(t)$ on $[0,T]$ with $g(0)=g_0$ satisfies
            $g(t)=g_0$ for every $t\in[0,T]$.
\end{corollary}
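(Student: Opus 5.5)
The plan is to reduce to Corollary~\ref{cor:uniqueness} by comparing $g(t)$ with the static flow $\widetilde g(t)\equiv g_0$. Since $\Ric_{g_0}=0$, the static family $\widetilde g(t)=g_0$ is a smooth complete Ricci flow on $[0,\infty)$ with initial metric $g_0$. Moreover $g_0$ satisfies \eqref{eq:initial-geometry} with $K=0$ and $i_0:=\inf_M\inj_{g_0}>0$. Put $\tau:=\hat T(n,1,0,i_0)>0$. Corollary~\ref{cor:uniqueness} then gives $g(t)=g_0$ for $0\le t\le\min\{\tau,T\}$.

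Next I iterate in time. The key point is that $\tau$ depends only on $n$ and $i_0$, and after one step the metric at the new initial time is again $g_0$. That metric is still Ricci-flat and has the same injectivity radius bound. So let
\[
t^*:=\sup\{s\in[0,T]: g(t)=g_0\ \text{for all } t\in[0,s]\}.
\]
By smoothness up to the endpoints, $g(t^*)=g_0$. Suppose $t^*<T$. The shifted family $h(t):=g(t^*+t)$ on $[0,T-t^*]$ is a smooth complete Ricci flow with $h(0)=g_0$. Corollary~\ref{cor:uniqueness} applied to $h$ and the static flow yields $g=g_0$ on $[t^*,\,t^*+\min\{\tau,T-t^*\}]$, which contradicts the maximality of $t^*$. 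Hence $t^*=T$. Equivalently, one can simply cover $[0,T]$ by finitely many intervals of length $\tau$.

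The only step needing care is checking the hypotheses of Corollary~\ref{cor:uniqueness} for the shifted flow. Completeness of every time slice and smoothness up to both endpoints of $[0,T-t^*]$ are inherited directly from the assumptions on $g(t)$. The uniform step size $\tau$ is what prevents the iteration from stalling, and this uniformity is exactly the content of Theorem~\ref{thm:main-curvature}, since $\hat T$ depends only on the data $(n,\alpha,K,i_0)$. Therefore I expect no real obstacle beyond this bookkeeping: all of the analytic difficulty is contained in Theorem~\ref{thm:main-curvature} and in Lee's uniqueness theorem.
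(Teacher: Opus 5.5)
Your proposal is correct and follows the paper's proof. You compare $g(t)$ with the static flow $g_0$, apply Corollary~\ref{cor:uniqueness} with $K=0$ to get $g(t)=g_0$ on $[0,\min\{\hat T(n,1,0,i_0),T\}]$, and then use that $\hat T$ depends only on $(n,i_0)$ to reach all of $[0,T]$; your supremum argument with the time-shifted flow just makes explicit the iteration that the paper compresses into ``since $\hat T$ is uniform.''
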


For a complete flat initial metric, lifting the flow to its Euclidean universal
        cover removes the requirement for an injectivity radius hypothesis on
        the original manifold.

\begin{corollary}
    \label{cor flat uniqueness}
    Every smooth complete Ricci flow from a complete flat metric is stationary
            throughout its existence time interval.
    In particular, every such flow from the Euclidean metric on
            $\mathbb R^n$ is stationary.
\end{corollary}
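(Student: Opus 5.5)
The statement to prove is Corollary~\ref{cor flat uniqueness}. The plan is to reduce to Corollary~\ref{cor:ricci-flat} on the universal cover. Let $(M,g_0)$ be complete and flat, and let $g(t)$, $t\in[0,T]$, be a smooth complete Ricci flow with $g(0)=g_0$. Let $\pi:\widetilde M\to M$ be the universal covering and set $\widetilde g(t)=\pi^*g(t)$. Since $\pi$ is a local isometry for each $t$, the family $\widetilde g(t)$ is a smooth Ricci flow on $\widetilde M\times[0,T]$, and $\widetilde g(0)=\pi^*g_0$. Completeness lifts because a Riemannian covering of a complete manifold is complete: geodesics of $\pi^*g(t)$ project to geodesics of $g(t)$, and lifting via path lifting shows they are defined for all time. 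So $\widetilde g(t)$ is a smooth complete Ricci flow in the sense of the paper, with every time slice complete.

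By Cartan--Hadamard, $(\widetilde M,\pi^*g_0)$ is a complete, flat, simply connected manifold, hence isometric to $(\mathbb R^n,g_{\mathrm{euc}})$. Its injectivity radius is $+\infty$, so in particular $\inf\inj>0$, and it is Ricci-flat. Corollary~\ref{cor:ricci-flat} then gives $\widetilde g(t)=\pi^*g_0$ for all $t\in[0,T]$. Since $\pi$ is a surjective local diffeomorphism and $\pi^*g(t)=\pi^*g_0$, we get $g(t)=g_0$ on $M$ for all $t\in[0,T]$.

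If the existence interval is $[0,T)$ or otherwise not closed, apply this argument on each $[0,T']$ with $T'<T$. The Euclidean case is the special case $M=\mathbb R^n$; there the injectivity radius is already infinite, so no covering is needed. In dimension $n=1$ every metric is flat and Ricci flow is trivially stationary, so we may assume $n\ge2$, which is the range in which Corollary~\ref{cor:ricci-flat} (via Theorem~\ref{thm:main-curvature}) applies.

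The only delicate step is checking that completeness of every time slice passes to the cover, which is the standard fact above. All the real content is carried by Corollary~\ref{cor:ricci-flat}. That corollary in turn follows from Theorem~\ref{thm:main-curvature}, which gives $|\Rm|\le 1/t$ on a short interval, together with Corollary~\ref{cor:uniqueness} comparing $g(t)$ with the static solution $g_0$ (with $K=0$). Stationarity on a short interval then propagates to all of $[0,T]$ by restarting the argument: the constant $\hat T$ depends only on $n$, $K=0$ and the initial injectivity radius, and all of these are preserved once the flow is known to be static.
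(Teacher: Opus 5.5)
Your proposal is correct and follows the paper's proof. You lift the flow to the universal cover, note that the lifted initial metric is complete, flat and simply connected (hence Euclidean, with infinite injectivity radius), apply Corollary~\ref{cor:ricci-flat}, and descend using that $\pi$ is a surjective local diffeomorphism; your extra remarks on lifting completeness, on half-open existence intervals and on the trivial case $n=1$ fill in details the paper leaves implicit.
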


In particular, we give an affirmative answer to Chen's question 
        on the strong uniqueness of Ricci flow from Euclidean space in every dimension \cite{Chen09JDG}. The case of dimensions two and three has been solved by Chen in the same work \cite{Chen09JDG}.

\medskip
\noindent\textbf{Strategy of the proof.}
The proof uses Perelman's entropy \cite{Perelman02arXiv} to obtain curvature
        control from the initial geometry.
This approach is closely related to Wang's entropy formulation of
        pseudolocality \cite{Wang20arXiv}, see also Lee, Naber and Neumayer \cite{LNN23GT} and Cheng \cite{Cheng25Adv}.
We first explain how Lemma~\ref{lem uniform small entropy} and
        Corollary~\ref{cor initial entropy propagation} imply
        Theorem~\ref{thm:main-curvature}.
In Theorem~\ref{thm:curvature}, we prove the curvature estimate assuming an
        initial scalar curvature lower bound and
\[
    \mu(g_0,s)\ge-\omega(s),\qquad
    \omega(s)\to0\quad\text{as}\quad s\to0^+.
\]
Under \eqref{eq:initial-geometry}, Lemma~\ref{lem uniform small entropy}
        gives this entropy bound, with $\omega$ depending only on $n,K,i_0$. We would like to point out that the geometric assumptions in \eqref{eq:initial-geometry} are used only in Lemma ~\ref{lem uniform small entropy}  apart from providing the initial scalar curvature lower bound.
To prove the lemma, we use the harmonic radius estimate of Anderson and
        Cheeger \cite{AC92JDG} to obtain uniform coordinate charts at
        sufficiently small scales.
Euclidean isoperimetry and Wang's comparison
        \cite[Lemma~3.5]{Wang18CambJ}
        then give local entropy lower bounds.
Using a partition of unity with $\sum\limits_j\eta_j^2=1$, we obtain the
        required global estimate.
Since $\mathcal R_{g_0}\ge-nK$, it remains to prove
        Theorem~\ref{thm:curvature}.

For this, we use the entropy monotonicity in
        Corollary~\ref{cor initial entropy propagation}:
\[
    \mu(g(t),\tau)\ge\mu(g_0,t+\tau),\qquad \tau>0.
\]
Suppose the curvature estimate in Theorem~\ref{thm:curvature} is false.
We choose points of large curvature at times tending to zero and rescale the
        corresponding flows.
A spatially weighted point-picking argument, as in
        \cite[Section~4]{Wang20arXiv}, together with the shrinking balls lemma
        \cite[Corollary~3.3]{ST22JDG}, gives uniform curvature bounds on
        parabolic neighborhoods whose spatial radii tend to infinity after
        rescaling.
Corollary~\ref{cor initial entropy propagation} propagates the initial entropy bound to the rescaled flows. Together with the local curvature bounds, this gives uniform volume and injectivity radius lower bounds at the basepoints. Hamilton’s compactness theorem then yields a complete pointed limit flow $(M_\infty,g_\infty(t),p_\infty)$ with
\[
|\mathrm{Rm}|_{g_\infty(0)}(p_\infty)=1.
\]
Moreover, after passing to the limit,  the initial entropy bound together with Corollary~\ref{cor initial entropy propagation} gives
\[
\mu(g_\infty(0),\tau)\ge0
\qquad\text{for every }\tau>0.
\]
Cheng's entropy rigidity theorem \cite{Cheng24arXiv}, in the form of
        Lemma~\ref{lem:entropy-rigidity}, implies that $g_\infty(0)$ is flat.
This contradicts \[
|\mathrm{Rm}|_{g_\infty(0)}(p_\infty)=1,
\]
which proves
        Theorem~\ref{thm:curvature}, and hence Theorem~\ref{thm:main-curvature}.

The main analytic step is to establish the monotonicity for $\mathcal W$-entropy needed
        for Corollary~\ref{cor initial entropy propagation} under only an
        initial scalar curvature lower bound.
On closed manifolds, Perelman's first variation formulas give the
        monotonicity for $\mathcal F$ and $\mathcal W$
        \cite[Sections~1 and~3]{Perelman02arXiv}.
For complete noncompact flows with bounded curvature, $\mathcal W$-entropy monotonicity under suitable decay assumptions is discussed in \cite[Section~3.3]{LNN23GT}. Related noncompact results appear in \cite[Section~7]{CTY11CJM} for conjugate heat kernels and in \cite[Section~4]{Zhang12JFA} for terminal data arising from $\mathcal W$-entropy extremals. Wang \cite{Wang18CambJ,Wang20arXiv} develops almost monotonicity estimates
        for local entropy by evolving local minimizers and controlling the
        localization errors, this also requires care with terminal regularity.
In this work, we will prove monotonicity for global  $\mathcal W$-entropy and $\mu$-entropy without an a priori
        global curvature bound on the flow.
Local smoothness of integrand alone does not justify differentiation under an integral
        over a noncompact manifold $M$, and the energy and entropy integrals are not known a priori
        to be finite or continuous in time. Our main contribution is to 
establish their finiteness and the time regularity needed for monotonicity
        through integral identities and inequalities under a smooth complete Ricci flow  from a complete initial metric with scalar curvature lower bound.

Assume that $\mathcal R_{g(0)}\ge-K_*$ for some $K_*\ge0$.
Let $u$ be the minimal nonnegative solution of the conjugate heat equation with
        terminal value $\phi^2$, where $\phi\in C_c^\infty(M)$ is nonnegative
        and $\int_M\phi^2\,d\mu_{g(T)}=1$.
Write $[v]_t=\int_Mv(x,t)u(x,t)\,d\mu_{g(t)}(x)$, and consider
\[
    I(t)=[|\nabla\log u|^2]_t,\qquad
    F(t)=I(t)+[\mathcal R]_t,\qquad
    H(t)=-[\log u]_t.
\]
The terminal values are defined using $\phi$.
For $\theta(t)=\tau+T-t$, $\tau>0$, the $\mathcal W$-entropy decomposes as
\[
    \mathcal W(t):=\mathcal W(g(t),\sqrt{u(t)},\theta(t))
    =\theta(t)F(t)+H(t)-\frac n2\log(4\pi\theta(t))-n.
\]
Thus our goal is to prove  monotonicity for $F$ and an integral identity for $H$ that together imply monotonicity for $\mathcal W$.

Dirichlet exhaustion, the maximum principle and local parabolic estimates
        give conservation of mass and local smooth convergence
        $u(t)\to\phi^2$ as $t\to T^-$.
Neither conclusion uses the initial scalar curvature lower bound.
Using the initial scalar lower bound and Chen's estimate \cite{Chen09JDG},
        we then control $[\mathcal R]_t$ and the spacetime integral of
        $|\Ric|^2$ against $u\,d\mu_{g(t)}\,dt$.
The scalar curvature average is nondecreasing and of bounded variation.

For the energy term $F(t)$, we use the variational representation of the Fisher
        information $I(t)$: for $t<T$,
\[
    I(t)=\sup_{X\in C_c^\infty(TM)}[2\diver X-|X|^2]_t.
\]
Starting from a compactly supported vector field, we solve
        $\partial_tX=\Delta_{g(t)}X+\Ric_{g(t)}(X)$,
        an equation used by Ilmanen and Knopf \cite{IK03MRL} and Brendle
        \cite{Brendle20Acta}.
The identity $\Box|X|_{g(t)}^2=-2|\nabla X|_{g(t)}^2$ gives a uniform bound
        for $X$.
Together with conservation of mass, this bound allows us to pass to the
        limit in the weighted norm identity, since the boundary terms tend
        to zero under exhaustion.
A divergence estimate and the scalar curvature integral inequality also give an
        exact integral identity for $[\diver X]_t$.
Combining these identities with the scalar curvature inequality yields
        monotonicity of $J_X(t):=[\mathcal R+2\diver X-|X|^2]_t$.
The compact support of the terminal data bounds $J_X$ at $T$.
Taking the supremum over the initial vector fields first proves finiteness
        of $I$ and $F$, and then shows that $F$ is bounded, nondecreasing,
        and satisfies
\[
    dF\ge\frac2nF^2\,dt
\]
as an inequality of measures.
In particular, both $F$ and $I=F-[\mathcal R]$ have bounded variation.
We also prove that $I(t)$ and $F(t)$ converge to their terminal values as
        $t\to T^-$.
We do not require absolute continuity of $F$, the measure $dF$ may have a
        nonnegative singular part.

To our knowledge, this use of the variational representation of Fisher information, together with evolving test vector fields, to establish the finiteness of $I$ and $F$ and the monotonicity of $F$ is new in the Ricci flow setting. By deriving integral estimates before taking the supremum over test vector fields, we obtain these conclusions under an initial scalar curvature lower bound, without assuming global curvature bounds or a priori finiteness and time regularity of these functionals.

For the entropy term $H(t)$, the identity $H'=-F$ is classical on closed flows,
        see \cite[equation~(6.5.1)]{Topping06Book} and
        \cite[Theorem~2.1]{GPT13PJM}.
To justify it here, we first prove an integration by parts formula for bounded
        smooth functions, using Dirichlet heat solutions as cutoffs.
By applying this  formula to bounded regularizations of $\log(U/u)$, where $U$
        is an upper bound for $u$, we prove that $H$ is finite and absolutely
        continuous, in fact Lipschitz, and
\[
    H(t)-H(s)=-\int_s^tF(r)\,dr.
\]
The measure inequality for $F$ and this identity give
\[
    d\mathcal W\ge\frac{2\theta}{n}
        \left(F-\frac{n}{2\theta}\right)^2dt\ge0.
\]
Together with the endpoint estimates, this proves the required
        $\mathcal W$-monotonicity.

To pass from $\mathcal W$ to $\mu$, we must account for the noncompact support
        of $\sqrt{u(0)}$.
The established integrability of $|\nabla\sqrt{u(0)}|^2$,
        $|\mathcal R_{g(0)}|u(0)$, and $u(0)|\log u(0)|$ permits approximation
        by normalized compactly supported smooth functions with convergence
        of every term in $\mathcal W$.
Taking the infimum over the terminal data then gives
        Corollary~\ref{cor initial entropy propagation}.

\medskip

Throughout the paper, manifolds and Riemannian metrics are smooth, and manifolds
        are connected and without boundary.
The symbols $\Vol_g$ and $\Area_g$ denote Riemannian volume and hypersurface
        area, and $g_{\mathrm E}$ denotes the Euclidean metric.
We set $\VolB_g(p,r):=\Vol_g(B_g(p,r))$.
We write $\nabla$, $\Ric$, and $\Rm$ for the Levi-Civita connection, Ricci
        tensor, and full curvature tensor, respectively.
The scalar curvature of a metric $g$ is denoted by $\mathcal R_g$. In
        particular, $\mathcal R_{g(t)}$ is the scalar curvature of $g(t)$, and
        $\mathcal R_{g(t)}(x)$ is its value at $x$.
The scalar and rough Laplacians use the convention $\Delta_g:=\tr_g\nabla^2$.
For a time-dependent metric $g(t)$, set
\[
    \Box:=\partial_t-\Delta_{g(t)},
\]
where $\Delta_{g(t)}$ is the Laplacian with respect to $g(t)$.
We denote the formal adjoint of $\Box$ with respect to
        $d\mu_{g(t)}\,dt$ by $\Box^*$.
Within computations, we may omit metric subscripts and metric or time arguments
        when they are unambiguous, writing, for example,
        $\mathcal R$ and $\Delta$.
Spatial operators, norms, inner products, contractions, and volume measures at
        time $t$ are taken with respect to $g(t)$ unless otherwise stated.
When a tensor or differential operator is defined using the same metric as its
        norm, we display the metric only on the norm.
In particular, we use the abbreviations
\[
    |\Rm|_g:=|\Rm_g|_g,
    \qquad |\Ric|_g:=|\Ric_g|_g.
\]
The same convention applies to covariant derivatives, inner products, and traces
        when the underlying metric is clear.
For weighted averages, the subscript specifies the time and the metric, thus we
        write $[\mathcal R]_t$ and $[|\nabla X|^2]_t$.
These conventions apply also in definitions and theorem statements.
When the defining metric and the norm metric differ, both are retained, we write $|\Rm_g|_{\widehat g}$ for
        example.
We use $\Ric_g$ also for the endomorphism of $TM$ obtained by
        raising one index, so that
\[
    (\Ric_g(X))^i:=g^{ij}R_{jk}X^k.
\]
Here $R_{jk}$ are the covariant components of the Ricci tensor.
Thus $g(\Ric(X),Y)=\Ric_g(X,Y)$ for vector fields $X,Y$.
When adding $\nabla X$ to a covariant two-tensor, we lower the vector index, so
        $(\nabla X)_{ij}:=g_{jk}\nabla_iX^k$.
This two-tensor need not be symmetric.
We write $\langle\cdot,\cdot\rangle_g$ for the metric inner product
        on vectors and tensors.
For covariant two-tensors $A,B$,
\[
    \langle A,B\rangle_g=g^{ik}g^{j\ell}A_{ij}B_{k\ell}.
\]

The paper is organized as follows. In Section~\ref{sec:preliminaries}, we study the heat and conjugate heat equations and establish the weighted integral identities and inequalities needed in the proof. In Section~\ref{sec:entropy}, we establish the finiteness and time regularity of the energy and entropy and prove the monotonicity of $\mathcal W$ and $\mu$. In Section~\ref{sec:curvature}, we establish a uniform entropy lower bound for the initial metric and prove the pseudolocality theorem. Finally, in Section~\ref{sec:consequences}, we prove the strong uniqueness and stationarity results.

\medskip

{\bf AI disclosure statement.} 
This work grew out of the authors' sustained study of complete noncompact
        Ricci flows.
The authors used ChatGPT-6 Astra in an extended series of mathematical
        discussions, posing research questions, supplying relevant literature,
        and suggesting directions of investigation.
In the course of these discussions, the model contributed the main ideas
        underlying the proofs.
The authors critically examined and verified the arguments,
        checked the use of results from the literature,
        and reorganized and refined the proofs.
They also developed the final formulation and organization of the results,
        and wrote the manuscript.
The authors take full responsibility for the mathematical content,
        the accuracy of the citations, and the final presentation.

\section{Heat equations, integral inequalities and identities}
\label{sec:preliminaries}

Throughout this section, let $(M^n,g(t))$ be a complete
        Ricci flow for $t\in[0,T]$.
We first recall some maximum principles and heat kernel facts, and
        then derive the weighted integral   inequalities and identities needed for the analysis of 
        Perelman's energy and entropy.

\subsection{Maximum principles and heat kernels}

We first recall Chen's scalar curvature lower bound, which requires
        no a priori bound on $|\Rm|_{g(t)}$.

\begin{proposition}[Chen {\cite[Corollary~2.3~(i)]{Chen09JDG}}]
    \label{prop scalar lower bound}
    If $\mathcal R_{g(0)}\ge-K_*$ for some $K_*\ge0$, then
    \begin{equation}
        \label{eq:chen-scalar}
        \mathcal R_{g(t)}\ge
            -\frac{K_*}{1+2K_*t/n}\ge-K_*
        \qquad\text{on }M\times[0,T].
    \end{equation}
\end{proposition}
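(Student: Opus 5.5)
The plan is to run a maximum principle argument for the scalar curvature along the flow, using the evolution equation
\[
    \Box\mathcal R=2|\Ric|^2\ge\frac2n\mathcal R^2,
\]
with a barrier that needs no control on $|\Rm|_{g(t)}$ at spatial infinity. Set $\psi(t):=-\frac{K_*}{1+2K_*t/n}$. This solves $\psi'=\frac2n\psi^2$ with $\psi(0)=-K_*$. The goal is to show $\mathcal R\ge\psi$ everywhere, and the second inequality in \eqref{eq:chen-scalar} is then immediate.

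Since no global curvature bound is available, the standard noncompact maximum principle cannot be applied directly. Instead I would localize in the style of Chen's argument. Fix $x_0\in M$ and $r_0>0$ large. Use a cutoff built from the distance function $d_t(x_0,\cdot)$, say $\varphi=\eta\big((d_t(x_0,x)+Ct)/r_0\big)$ with $\eta$ nonincreasing. The quantity $d_t+Ct$ is needed because of Perelman's distance distortion estimate: if $\Ric\le (n-1)r_1^{-2}$ on $B_t(x_0,r_1)$, then outside that ball $\Box d_t\ge -C(n)/r_1$ in the barrier sense. The catch is that this requires a local Ricci upper bound near $x_0$. On any compact spacetime set $\overline{B_{g(0)}(x_0,A)}\times[0,T]$ curvature is bounded by smoothness, so I would fix the local bound there (depending on $x_0$) and let $r_0\to\infty$. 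The $C/r_1$ error then enters only with a $1/r_0$ factor.

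The core computation is at a first negative minimum of $Q:=\varphi\,(\mathcal R-\psi)$ or of a suitable variant, for instance the minimum of $\varphi\mathcal R$ compared with $\psi$ plus a small correction $\epsilon$ and terms in $1/r_0$. At such a point:
\begin{itemize}
\item the gradient term is controlled by $|\nabla\varphi|^2/\varphi\le C/r_0^2$, using the choice $\eta'^2\le C\eta$;
\item the reaction term uses $\mathcal R^2\ge$ the relevant quadratic in $\psi$ whenever $\mathcal R<\psi<0$, giving $\frac2n\mathcal R^2\ge\frac2n\psi^2=\psi'$;
\item the support of $\varphi$ stays within a fixed distance at times $\le T$, because $d_t$ is finite and continuous on compact sets.
\end{itemize}
The result should be
\[
    \mathcal R\ge\psi-\frac{C(n,T,\dots)}{r_0}
\]
on the set where $\varphi=1$. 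Letting $r_0\to\infty$, which makes that set exhaust $M$, gives the claim.

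The main obstacle is making the barrier estimate for $\Box d_t$ legitimate. The Ricci upper bound needed near $x_0$ depends on $x_0$, so the constants must enter only through terms that vanish as $r_0\to\infty$, or be handled by taking the cutoff center-dependent constants and using $r_1$ fixed. I would also need to check that the reaction term absorbs the error where $\varphi<1$. A cleaner alternative is to work with $\varphi(\mathcal R-\psi)$ and use the $\varphi\mathcal R^2$ term: when the minimum is negative and large, it dominates $|\nabla\varphi|^2/\varphi$ and $|\Box\varphi|$. This is Chen's trick of exploiting $\frac2n\mathcal R^2$ to absorb cutoff errors. That is the step I would verify most carefully.
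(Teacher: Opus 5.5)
Your proposal is correct and follows the argument the paper relies on. The paper gives no proof of its own but cites Chen, whose proof is this localized maximum principle: Perelman's distance-distortion bound needs a Ricci upper bound only on a small ball around $x_0$ (automatic by local smoothness), the cutoff scale is sent to infinity, and the cutoff errors are absorbed using $\mathcal R^2-\psi^2\ge(\mathcal R-\psi)^2$ when $\mathcal R\le\psi\le0$, which is the precise form of the step you flag. Within the paper the statement also follows in one line from Proposition~\ref{prop quadratic maximum} with $w=-\mathcal R_{g(t)}$, $\beta=2/n$ and $K=K_*$, since $\Box(-\mathcal R)=-2|\Ric|^2\le-\frac2n\mathcal R^2$; that proposition is itself proved by adapting Chen's localization.
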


The next proposition is a special case of
        \cite[Proposition~2.1]{HP26arXiv} by the authors.
Their proof adapts the localization argument of Chen
        \cite[Proposition~2.1]{Chen09JDG}.
See also the related maximum principles in
        \cite[Lemma~5.1]{HLTT22CAG} by the first-named author, Lee, Tam
        and Tong, and in \cite{CL23MathAnn} by Cho and Li.

\begin{proposition}
    \label{prop quadratic maximum}
    Let $0\le s<T$, $\beta>0$, and $K\ge0$.
    Suppose that $w\in C(M\times[s,T])$ is smooth for $t>s$ and satisfies
    \[
        \Box w\le-\beta w^2\quad\text{wherever}\quad w>0,
        \qquad w(\cdot,s)\le K.
    \]
    Then
    \[
        w(x,t)\le\frac{K}{1+\beta K(t-s)}\le K
        \quad\text{for all}\quad(x,t)\in M\times[s,T].
    \]
\end{proposition}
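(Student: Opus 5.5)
The plan is to localize the comparison with the ODE solution using a cutoff function, in the spirit of Chen's argument, because no curvature bound is available along the flow. Set $\varphi(t)=K/(1+\beta K(t-s))$; it solves $\varphi'=-\beta\varphi^2$ with $\varphi(s)=K$. Fix $x_0\in M$ and $\varepsilon>0$. We want to show $w\le\varphi+\varepsilon$ on a compact neighborhood of $x_0$, with an error that disappears once a localization radius is sent to infinity.

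The localization will use the distance function of a single metric. Because $g(t)$ is smooth on $M\times[0,T]$, on each compact set $\overline{B_{g(T)}(x_0,2A)}\times[s,T]$ the curvature is bounded by some $C(A)$. I would therefore use a cutoff $\psi=\eta\bigl(d_{g(T)}(x_0,\cdot)/A\bigr)$, where $\eta$ is a standard nonincreasing cutoff with $\eta=1$ on $[0,1]$ and $\eta=0$ on $[2,\infty)$, regularized near the cut locus (Calabi's trick). On the support of $\psi$, $\Delta_{g(t)}\psi$ and $|\nabla\psi|_{g(t)}$ are controlled by constants depending on $A$, through the local bounds on $g(t)$ versus $g(T)$ and on the Christoffel symbols. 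This dependence on $A$ is a problem, so I would instead organize the argument as Chen does.

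Consider $Q=\psi\,(w-\varphi)$, or better $\psi\,w$ compared directly. At an interior positive maximum of $\psi w$ at a first time where $\psi w>\varphi$, we have $w>0$, so
\[
0\le\Box(\psi w)\le-\beta\psi w^2+w\,\Box\psi-2\langle\nabla\psi,\nabla w\rangle .
\]
The gradient term is removed using $\nabla(\psi w)=0$, which turns it into $2w|\nabla\psi|^2/\psi$. Multiplying by $\psi$ gives
\[
\beta(\psi w)^2\le \psi w\,\bigl(\psi\,\Box\psi\bigr)+2\psi w|\nabla\psi|^2 .
\]
Hence at such a point $\psi w\le C_A/\beta$, where $C_A$ is the local bound of $|\psi\Box\psi|+2|\nabla\psi|^2$. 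A cleaner route to the sharp bound $\varphi$ is to apply the same computation to $\psi\,(w-\varphi)_+$ together with the quadratic gain, and to run Chen's scaling. Choose $\psi$ with $|\nabla\psi|^2\le C\psi/A^2$ and $-\Box\psi\le C/A^2$ on the support during a short time interval where the metrics are comparable. This shows $\psi w\le\varphi+C'/A$ on that interval, and letting $A\to\infty$ gives $w\le\varphi$ near $x_0$. The remaining time is covered by iterating over subintervals, restarting at the new initial time with $K$ replaced by $\varphi$. The semigroup property $\varphi_{K}(t)=\varphi_{\varphi_K(t_1)}(t-t_1)$ keeps the bound exact.

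The main obstacle is the step just described: producing a cutoff whose $\Box\psi$ and $|\nabla\psi|$ are $O(1/A)$ without global curvature control on $g(t)$. I would first try Chen's device. Use the distance at the fixed time $s$ on balls of radius $A$, choose the time step small depending on the local curvature bound on the compact set (so that $g(t)$ stays uniformly equivalent to $g(s)$ there and the derivatives of the cutoff are controlled), and exploit the fact that the quadratic absorption term $-\beta w^2$ dominates the linear error terms. That dominance yields a bound independent of the local geometry in the limit. The final inequality $\varphi\le K$ is immediate.
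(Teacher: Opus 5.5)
\textbf{Verdict: genuine gap.} The gap is at the step you flag as the main obstacle, and the device you propose there does not work.

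\textbf{Why the fixed-time cutoff fails.} Since $w>0$ at the maximum, you need an \emph{upper} bound on $\Box\psi=-\Delta_{g(t)}\psi$, not a bound on $-\Box\psi$. For $\psi=\eta(d_{g(s)}(x_0,\cdot)/A)$ this means an upper bound on $\Delta_{g(t)}d_{g(s)}(x_0,\cdot)$ on the annulus $A\le d\le 2A$. Already at $t=s$, Laplacian comparison only gives $C\sqrt{k_A}$ there, where $-(n-1)k_A$ is a Ricci lower bound on $B_{g(s)}(x_0,2A)$. With no curvature assumption, $k_A$ may grow arbitrarily fast, so $|\eta'|\Delta d/A$ need not tend to $0$.

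The quadratic term cannot compensate. At an interior maximum it gives only $\beta\psi w\le\Box\psi+2|\nabla\psi|^2/\psi$ (your displayed inequality misplaces a factor of $\psi$), so the bound is only as good as the cutoff.

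The time-stepping cannot be closed either. The step length depends on the curvature on $\overline{B(x_0,2A)}\times[s,T]$, hence on $A$ and $x_0$, and it shrinks as $A\to\infty$. Moreover, at each restart time you know the bound only near $x_0$ and with an error, not on the support of the next cutoff.

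\textbf{Why no argument of this type can work.} Every ingredient of your plan is available for the heat operator of a \emph{static} complete metric. There the conclusion is false if the metric is stochastically incomplete: $w=e^{-(t-s)}\bigl(1-\int_Mp_{t-s}(\cdot,y)\,d\mu(y)\bigr)$ satisfies $\Box w=-w\le-w^2$ and $w(\cdot,s)=0$, yet $w\not\equiv0$. So the Ricci flow equation must enter through a genuinely global mechanism, not just local comparability of the metrics.

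\textbf{The missing idea.} It is the one in Chen's localization \cite{Chen09JDG}, which the paper uses through \cite[Proposition~2.1]{HP26arXiv}. Use the time-dependent distance $d(x,t)=d_{g(t)}(x_0,x)$ together with Perelman's Lemma~8.3(a) \cite{Perelman02arXiv}:
\begin{itemize}
\item If $\Ric_{g(t)}\le(n-1)K_0$ on $B_{g(t)}(x_0,r_0)$ for $t\in[s,T]$, then $\Box d\ge-(n-1)\bigl(\tfrac23K_0r_0+r_0^{-1}\bigr)$ in the barrier sense wherever $d\ge r_0$.
\item This holds because $\partial_td=-\int_\gamma\Ric(\dot\gamma,\dot\gamma)$ cancels the Ricci term of the second-variation bound for $\Delta d$ away from $x_0$.
\item The constants $K_0,r_0$ depend only on the flow near $x_0$, not on $A$.
\end{itemize}

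Take $\psi=\eta(d/A)$ with $\eta'\le0$, $|\eta'|^2\le C\eta$, $\eta''\ge-C$, and use $|\nabla d|_{g(t)}=1$. Then $\Box\psi\le C/A$ and $|\nabla\psi|^2\le C\psi/A^2$ for all $t\in[s,T]$ and $A\ge\max\{1,r_0\}$, with no comparison of metrics at all. Completeness of each slice makes $\bigcup_{t}B_{g(t)}(x_0,2A)$ relatively compact, so $\psi w$ attains its maximum.

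For $K=0$ this yields $w\le C/(\beta A)$ on $B_{g(t)}(x_0,A)$, hence $w\le0$ after letting $A\to\infty$. The sharp bound then needs no iteration or semigroup argument. Where $w>\varphi$ one has $\Box(w-\varphi)\le-\beta(w+\varphi)(w-\varphi)\le-\beta(w-\varphi)^2$, so the case $K=0$ applies directly to $w-\varphi$.
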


\begin{proof}
    By applying \cite[Proposition~2.1]{HP26arXiv} to $-w$,
            taking $E\equiv0$, $W\equiv0$, and $\delta=\beta$
            in notation there, the result follows.
\end{proof}

The formal adjoint of $\Box$ with respect to $d\mu_{g(t)}\,dt$ is
\[
    \Box^*:=-\partial_t-\Delta_{g(t)}+\mathcal R_{g(t)}.
\]
Let $K(x,t;y,s)$ be the minimal positive fundamental solution of
        $\Box$ with $s<t$.
Its existence and monotone Dirichlet exhaustion construction follow
        from \cite[Theorem~24.40 and
        equations~(24.106)--(24.107)]{CGIKLN10Book} by taking $Q=0$
        in their notation.
        If $M$ is compact, these are standard heat kernel facts.
The kernel satisfies
\[
    \partial_tK=\Delta_{g(t),x}K,
    \qquad
    \partial_sK=-\Delta_{g(s),y}K+\mathcal R_{g(s)}(y)K.
\]
A \emph{smooth exhaustion} $\{\Omega_j\}$ of $M$ consists of smooth
        domains $\Omega_j\subset\subset\Omega_{j+1}\subset\subset M$
        with $\bigcup\limits_j\Omega_j=M$.
By the Dirichlet maximum principle and monotone convergence, we have
\[
    0\le\int_MK(x,t;y,s)\,d\mu_{g(s)}(y)\le1
    \quad\text{for}\quad 0\le s<t\le T.
\]

The following conservation result is a consequence of
        Kuwada-Philipowski \cite[Theorem~1]{KP11MathZ}.
For the reader's convenience, we include a direct proof.
For related conservation results under additional global curvature and
        derivative bounds along the flow, see \cite{CTY11CJM, HT22JGA}.

\begin{lemma}
    For all $x\in M$ and $0\le s<t\le T$,
    \begin{equation}
        \label{eq heat conservation}
        \int_M K(x,t;y,s)\,d\mu_{g(s)}(y)=1.
    \end{equation}
\end{lemma}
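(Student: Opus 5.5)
The plan is to show that the defect $w:=1-\int_M K(\cdot,\cdot\,;y,s)\,d\mu_{g(s)}(y)$ vanishes by a barrier argument whose barrier only needs curvature control near a fixed base point. Fix $s\in[0,T)$ and set
\[
    v(x,t):=\int_M K(x,t;y,s)\,d\mu_{g(s)}(y),\qquad t\in(s,T].
\]
By the Dirichlet exhaustion construction, $v=\lim_j v_j$ is a monotone limit. Here $v_j$ solves $\Box v_j=0$ on $\Omega_j\times(s,T]$ with $v_j=0$ on $\partial\Omega_j$ and $v_j(\cdot,s)=1$. By the remark preceding the lemma, $0\le v\le1$, $\Box v=0$, and $v(\cdot,t)\to1$ locally uniformly as $t\to s^+$. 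Hence $w=1-v$ satisfies
\[
    0\le w\le1,\qquad \Box w=0,\qquad w(\cdot,s)=0 .
\]
We must show $w\equiv0$. This is a uniqueness statement for bounded solutions of the forward heat equation, and we have no global curvature bound. So the barrier must use curvature information only near one point.

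For the barrier, fix $p\in M$ and $r_0>0$, and let $K_0$ bound $\Ric_{g(t)}$ on the compact set $\overline{B_{g(0)}(p,\Lambda)}\times[s,T]$, with $\Lambda$ large enough to contain all $g(t)$-balls $B_{g(t)}(p,r_0)$, $t\in[s,T]$. Perelman's Lemma~8.3(b) applies: wherever $d_t(p,x)>r_0$,
\[
    (\partial_t-\Delta_{g(t)})\,d_t(p,x)\ge-(n-1)\Big(\tfrac23K_0r_0+r_0^{-1}\Big)=:-C_0
\]
in the barrier sense. Its proof uses only the second variation along minimizing geodesics from $p$, together with the Ricci bound on $B_{g(t)}(p,r_0)$. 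No curvature bound at $x$ is needed. Thus $\varphi(x,t):=d_t(p,x)+C_0(t-s)+r_0$ is a barrier supersolution, $\Box\varphi\ge0$, outside a compact set. Near $p$ we have $d_t\le r_0$, and we only need $\varphi\ge r_0>0$ there.

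Then, for large $R$, I compare $w$ with $(\varphi+A)/R$ on the space-time region $D_R:=\{(x,t):\varphi(x,t)<R,\ t\in[s,T]\}$. Here $A$ is chosen so that $(\varphi+A)/R$ is a supersolution after handling the compact region near $p$. One way is to add $A(t-s)$ using a local Laplacian bound for the smooth function $d_t$ there; another is to replace $d_t$ by a smoothing near $p$. On the parabolic boundary of $D_R$ we have $w(\cdot,s)=0$ at the bottom and $w\le1\le(\varphi+A)/R$ on the lateral part $\{\varphi=R\}$. Calabi's trick handles the barrier sense at cut points. The maximum principle on the compact set $\overline{D_R}$ gives $w\le(\varphi+A)/R$, and letting $R\to\infty$ yields $w\equiv0$, i.e.\ \eqref{eq heat conservation}. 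The case $s$ with $t=s$ excluded is then immediate, and the identity holds for all $x$ and all $0\le s<t\le T$.

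The main obstacle is compactness of $D_R$. We need $\{x: d_t(p,x)<R\}$ to lie in one fixed compact set for all $t\in[s,T]$, and without curvature bounds the distances $d_t$ at different times need not be comparable far out. I would try to get this from Lemma~8.3(a) applied along a minimizing $g(t)$-geodesic from $p$, which bounds $\partial_t d_t(p,x)$ from below using curvature only near $p$. Since Lemma~8.3(a) as stated needs control at both endpoints, I would instead use the one-sided version: the contribution of $\Ric$ along the geodesic away from the $r_0$-ball around $p$ has the favorable sign after the second-variation trick, modulo the endpoint $x$. If this fails, the fallback is to argue on each Dirichlet domain $\Omega_j$ directly. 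There I would apply the same barrier only on $\Omega_j\times[s,T]$, which is compact, and use $1-v_j\le1$ on $\partial\Omega_j$, choosing $\Omega_j\supset\{d_t(p,\cdot)<R\ \forall t\}$ for a monotone family of $R$. This reduces the issue to showing that $\bigcap_{t}\{d_t(p,\cdot)<R\}$ exhausts $M$, which follows from the local smoothness of $g(t)$ on compact sets together with completeness of each slice.
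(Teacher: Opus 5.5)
Your route is genuinely different from the paper's, and it can be completed. However, the step you yourself call the main obstacle is not resolved as written. (The paper builds no barrier. It sets $w=e^{-(t-s)}(1-v)$, notes $\Box w=-w\le-w^2$ since $0\le w\le1$, and applies Proposition~\ref{prop quadratic maximum} with $\beta=1$, $K=0$, delegating all localization to that cited maximum principle.)

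\textbf{Your first idea for compactness of $D_R$ cannot work.} Along a minimizing $g(t)$-geodesic $\gamma$ from $p$ to $x$, the time derivative of $d_t(p,x)$ is controlled by $-\int_\gamma\Ric(\gamma',\gamma')\,ds$. Second variation bounds this integral only with test fields cut off near both endpoints, which is exactly why Lemma~8.3(a) needs curvature control near $x$. Without a cutoff at $x$, the free-endpoint term is $\Delta d_t$. So the only one-sided estimate is for $\partial_td_t-\Delta d_t$, i.e.\ Lemma~8.3(b).

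\textbf{Your fallback targets the wrong set.} On $\partial\Omega_j\times[s,T]$ you have $1-v_j=1$, so you need $\varphi\ge R-A$ there. Hence $\Omega_j$ must contain $B_{g(t)}(p,R')$ for every $t\in[s,T]$, with $R'=R-A-C_0T-r_0$. That $\bigcap_t\{d_t(p,\cdot)<R\}$ exhausts $M$ is trivial (measure a fixed curve) and gives nothing here. What you need is that $\bigcup_{t\in[s,T]}B_{g(t)}(p,R)$ is relatively compact for every $R$. Once you have that, $\overline{D_R}$ is compact and the Dirichlet fallback is superfluous.

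This fact is true and follows from compactness in $t$ plus completeness of one slice. Suppose it fails. Then there are $\rho_m\to\infty$, times $t_m\to t_*$ in $[s,T]$, and curves from $p$ of $g(t_m)$-length $<R$ leaving $\overline{B_{g(0)}(p,\rho_m)}$. Fix $\rho$. By continuity of $g$ on $\overline{B_{g(0)}(p,\rho)}\times[s,T]$, we have $g(t_*)\le4g(t_m)$ there for large $m$. So the arc up to the first hit of $\{d_{g(0)}(p,\cdot)=\rho\}$ has $g(t_*)$-length $<2R$. Thus $B_{g(t_*)}(p,2R)$ meets every sphere $\{d_{g(0)}(p,\cdot)=\rho\}$. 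This contradicts compactness of $\overline{B_{g(t_*)}(p,2R)}$. The paper uses this same fact in the proof of Theorem~\ref{thm:curvature}.

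\textbf{A smaller slip occurs near $p$.} There $d_t(p,\cdot)$ is not smooth, and $-\Delta d_t\approx-(n-1)/d_t$ is unbounded below. So adding $A(t-s)$ cannot fix the supersolution property. Positivity of $\varphi$ there is irrelevant, because the first contact can occur inside $B_{g(t)}(p,r_0)$. Use your smoothing option instead, e.g.\ $\eta(d_t)+C(t-s)$ with $\eta$ constant near $0$, $\eta'\ge0$, and $\eta(r)=r$ for $r\ge r_0$, plus an extra $(t-s)$ for strictness.

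With these repairs you get a self-contained linear barrier proof. The paper's exponential weight is shorter because it reuses an existing maximum principle.
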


\begin{proof}
    Fix $s<T$ and set
    \[
        w(x,t):=e^{-(t-s)}\left(1-\int_MK(x,t;y,s)\,d\mu_{g(s)}(y)\right)
        \quad\text{for}\quad s<t\le T.
    \]
    By Dirichlet exhaustion and local parabolic regularity, the heat
            kernel integral is smooth and solves the heat equation for $t>s$.
    It tends to $1$ locally uniformly as $t\to s^+$, so $w$ extends
            continuously to $w(\cdot,s)=0$.
    Moreover,
    \[
        0\le w\le1,\qquad \Box w=-w\le-w^2.
    \]
    Proposition~\ref{prop quadratic maximum}, with $\beta=1$ and
            $K=0$, yields $w\le0$.
    Hence $w\equiv0$, which completes the proof.
\end{proof}

\subsection{The conjugate heat equation and integral inequalities}

Fix a nonnegative $\phi\in C_c^\infty(M)$ with
        $\int_M\phi^2\,d\mu_{g(T)}=1$, and define
\begin{equation}
    \label{eq conjugate definition}
    u(y,s):=\int_M K(x,T;y,s)\phi(x)^2\,d\mu_{g(T)}(x),
    \qquad 0\le s<T.
\end{equation}
Set $u(T):=\phi^2$. For $s<T$, the function $u$ is positive and smooth. 
It is the minimal nonnegative solution of the conjugate heat equation
        satisfying
\begin{equation}
    \label{eq conjugate equation}
    \partial_tu=-\Delta_{g(t)}u+\mathcal R_{g(t)}u,\qquad
    \int_Mu(t)\,d\mu_{g(t)}=1.
\end{equation}
Indeed, choose a smooth exhaustion $\{\Omega_j\}$ of $M$ with
        $\supp\phi\subset\subset\Omega_1$.
Let $K_j$ be the Dirichlet heat kernel of $\Box$ on $\Omega_j$,
        extended by zero in both spatial variables outside $\Omega_j$.
If $M$ is compact, take $\Omega_j=M$ and $K_j=K$.
Define
\[
    u_j(y,s):=\int_{\Omega_j}K_j(x,T;y,s)\phi(x)^2
            \,d\mu_{g(T)}(x).
\]
Then $u_j$ solves the conjugate heat equation with terminal value
        $\phi^2$ and zero boundary values, see
        \cite[Lemmas~26.4 and~26.9]{CGIKLN10Book}.
Since $0\le K_j\to K$, it follows by monotone convergence that
\[
    0\le u_j(y,s)\to u(y,s).
\]
If $\widetilde u$ is another nonnegative solution with terminal value $\phi^2$,
        then $u_j\le\widetilde u$ by the parabolic maximum
        principle on $\Omega_j$.
Letting $j\to\infty$, we conclude that $u\le\widetilde u$.
Finally, by Tonelli's theorem and \eqref{eq heat conservation}, we have
\begin{align*}
    \int_Mu(y,s)\,d\mu_{g(s)}(y)
    &=\int_M\phi(x)^2
        \left(\int_MK(x,T;y,s)\,d\mu_{g(s)}(y)\right)
        d\mu_{g(T)}(x)\\
    &=\int_M\phi(x)^2\,d\mu_{g(T)}(x)=1.
\end{align*}

We also establish local regularity up to the terminal time.
By \cite[Corollary~2.3~(i)]{Chen09JDG}, we have
        $\mathcal R_{g(t)}\ge-n/(2t)\ge-n/T$ on $M\times[T/2,T]$.
By parabolic maximum principle after time reversal, we have
\[
    0\le u_j(y,t)\le e^{n(T-t)/T}\sup_M\phi^2
        \le e^{n/2}\sup_M\phi^2
    \quad\text{for all}\quad T/2\le t\le T.
\]
Fix $k$.
On the fixed domain $\Omega_{k+1}\times[T/2,T]$, the equations for
        $u_j$, $j\ge k+1$, have smooth coefficients independent of $j$.
After time reversal, they are uniformly parabolic with common
        initial data $\phi^2$.
The uniform bound above and local parabolic Schauder estimates (see,
        for example, \cite[Theorem~4.28]{Lieberman96Book})
        up to the initial time yield uniform estimates of every order
        for $u_j$ on $\Omega_k\times[T/2,T]$, independent of $j$.
We pass to the limit and conclude that $u$ extends locally smoothly
        to $t=T$, with terminal value $\phi^2$.
In particular,
\[
    u(t)\to\phi^2=u(T)
    \quad\text{in }C^\infty_{\mathrm{loc}}(M)
    \quad\text{as}\quad t\to T^-.
\]

Moreover, if $\mathcal R_{g(t)}\ge-K_*$ on $M\times[0,T]$ for some
        $K_*\ge0$, by the same argument and letting $j\to\infty$,
        we obtain
\begin{equation}
    \label{eq conjugate upper}
    0<u(t)\le e^{K_*(T-t)}\sup\limits_M\phi^2
        \le e^{K_*T}\sup\limits_M\phi^2=:U
        \quad\text{for all}\quad 0\le t<T.
\end{equation}

A \emph{probability density} with respect to $d\mu_g$ is a measurable
        function $v\ge0$ satisfying $\int_Mv\,d\mu_g=1$.
By \eqref{eq conjugate equation}, $u(t)$ is a probability density
        with respect to $d\mu_{g(t)}$.
Thus
\[
    d\nu_t(x):=u(x,t)\,d\mu_{g(t)}(x)
\]
defines a probability measure on $M$.
For related uses of conjugate heat equations and their associated probability measures in Ricci
        flow, see \cite{Perelman02arXiv,HN14CPAM,Bamler20arXiv} for example.
For a measurable function $v(x,t)$ with $\int_M|v(x,t)|\,d\nu_t(x)<+\infty$,
        define its weighted average by
\[
    [v]_{u,t}
    :=\int_Mv(x,t)\,d\nu_t(x)
    =\int_Mv(x,t)u(x,t)\,d\mu_{g(t)}(x).
\]
For $v\ge0$, the same notation allows the value $+\infty$.
When $u$ is fixed, write $[v]_t:=[v]_{u,t}$.

In general, smoothness of $u$ and $v$ and finiteness of these integrals at each
        time do not, on a noncompact manifold, imply continuity or
        differentiability of $t\mapsto[v]_t$.
For a smooth $v$ with
        $\bigcup\limits_{t\in[0,T]}\supp v(t)\subset\subset M$ and $0<t<T$,
\begin{align*}
    \frac{d}{dt}[v]_t
    &=\int_M\bigl(u\partial_tv+v\partial_tu
                    -\mathcal Ruv\bigr)\,d\mu_{g(t)}\\
    &=\int_M\bigl(u\partial_tv-v\Delta u\bigr)\,d\mu_{g(t)}\\
    &=\int_Mu(\partial_tv-\Delta v)\,d\mu_{g(t)}
    =[\Box v]_t.
\end{align*}
For functions without compact support, the next lemma proves an
        integral inequality.
No scalar curvature lower bound is assumed,
        $u$ is defined by \eqref{eq conjugate definition}.

\begin{lemma}
    \label{lem positive comparison}
    Let $v\ge0$ be smooth on $M\times[0,T]$ and satisfy $\Box v=q\ge0$.
    For $0\le t_1<t_2\le T$, it holds that
    \[
        [v]_{t_2}\ge[v]_{t_1}
            +\int_{t_1}^{t_2}[q]_t\,dt.
    \]
\end{lemma}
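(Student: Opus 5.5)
The plan is to localize with the Dirichlet approximations $u_j$ of $u$ on the smooth exhaustion $\{\Omega_j\}$. Each $u_j$ vanishes on $\partial\Omega_j$, and the resulting boundary term has a favorable sign. Then we pass to the limit using monotone convergence, since $u_j\uparrow u$ pointwise, and use that all integrands are nonnegative.

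First fix $j$ and set $[v]^{(j)}_t:=\int_{\Omega_j}v\,u_j\,d\mu_{g(t)}$. Since $u_j$ is smooth on $\overline\Omega_j\times[0,T)$, solves the conjugate heat equation in $\Omega_j$ and vanishes on $\partial\Omega_j$, we differentiate under the integral (the domain is compact) and obtain for $t<T$
\[
\frac{d}{dt}[v]^{(j)}_t=\int_{\Omega_j}\bigl(u_j\partial_tv-v\Delta u_j\bigr)\,d\mu_{g(t)}.
\]
Green's formula converts $-\int v\Delta u_j$ into $-\int u_j\Delta v$ plus boundary terms. The term involving $\partial_\nu v$ drops because $u_j=0$ on $\partial\Omega_j$. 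What remains is $-\int_{\partial\Omega_j}v\,\partial_\nu u_j\,d\sigma$, with $\nu$ the outward normal. Because $u_j\ge0$ in $\Omega_j$ and $u_j=0$ on the boundary, we have $\partial_\nu u_j\le0$, so this term is $\ge0$ since $v\ge0$. Hence
\[
\frac{d}{dt}[v]^{(j)}_t\ge\int_{\Omega_j}u_j\,q\,d\mu_{g(t)}.
\]
Integrating from $t_1$ to $t_2<T$ gives $[v]^{(j)}_{t_2}\ge[v]^{(j)}_{t_1}+\int_{t_1}^{t_2}\int_{\Omega_j}u_jq$.

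The case $t_2=T$ needs care. The functions $u_j$ converge locally smoothly to $\phi^2$ as $t\to T^-$, using the uniform estimates on compact subsets of $\Omega_j$ established above. Near $\partial\Omega_j$ the support of $\phi$ is absent, but $u_j$ is bounded there. I would therefore rather use continuity of $u_j$ up to $t=T$ on $\overline\Omega_j$, as the Dirichlet problem with compatible data $\phi^2$ (compactly supported in $\Omega_1$) provides; this allows letting $t_2\to T$ in the finite-domain inequality. Alternatively, apply the inequality for $t_2<T$ only and take $t_2\to T$ at the end, which requires $[v]_{t_2}\to[v]_T$. That convergence is not available in general, so the finite-domain route is preferable.

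Finally let $j\to\infty$. The right side involves $\int_{\Omega_j}vu_j$ at time $t_1$ and $\int\int u_jq$. Both have nonnegative integrands increasing in $j$, because $u_j\uparrow u$ by the Dirichlet maximum principle and $\Omega_j$ increases. Monotone convergence therefore gives $[v]_{t_1}+\int_{t_1}^{t_2}[q]_t\,dt$ on the right. On the left, $\int_{\Omega_j}vu_j\to[v]_{t_2}$ likewise, and at $t_2=T$ this quantity equals $\int v\phi^2$ for all large $j$. All values may equal $+\infty$, and the inequality remains meaningful.

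The main obstacle I expect is the boundary regularity of $u_j$: the sign $\partial_\nu u_j\le0$ needs $u_j\in C^1$ up to $\partial\Omega_j$, and the endpoint $t=T$ needs continuity there. Both follow from standard Dirichlet theory for smooth domains, as in \cite{CGIKLN10Book}.
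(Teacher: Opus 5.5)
Your proposal is correct and follows essentially the same route as the paper. It uses the Dirichlet approximations $u_j$ on an exhaustion, Green's formula with the favorable sign $\partial_V u_j\le 0$ of the boundary term, integration in time, and monotone convergence as $j\to\infty$, and it handles $t_2=T$ as the paper does, through smoothness of each $u_j$ on $\overline{\Omega}_j\times[0,T]$, which holds because $\phi^2$ is compactly supported in $\Omega_1$.
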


\begin{proof}
    Consider the exhaustion $\{\Omega_j\}$ and the Dirichlet solutions
            $u_j$ constructed above.
    Note that $\supp\phi\subset\subset\Omega_j$ for each $j$.
    Each $u_j$ is smooth on $\overline{\Omega}_j\times[0,T]$.
    Let $V$ be the outward unit normal with respect to $g(t)$ and let
            $d\sigma_{g(t)}$ be the induced boundary measure.
    Since $u_j\ge0$ and $u_j=0$ on $\partial\Omega_j$, we have
            $\partial_Vu_j\le0$.
    By the conjugate heat equation and Green's formula, we obtain
    \begin{align*}
        \frac{d}{dt}\int_{\Omega_j}vu_j\,d\mu_{g(t)}
        &=\int_{\Omega_j}(u_j\partial_tv-v\Delta u_j)\,d\mu_{g(t)}\\
        &=\int_{\Omega_j}qu_j\,d\mu_{g(t)}
            -\int_{\partial\Omega_j}v\partial_Vu_j\,d\sigma_{g(t)}\\
        &\ge\int_{\Omega_j}qu_j\,d\mu_{g(t)},
    \end{align*}
    where the last inequality follows from $v\ge0$.
    Hence, for $0\le t_1<t_2\le T$,
    \[
        \int_{\Omega_j}v(t_2)u_j(t_2)\,d\mu_{g(t_2)}
        \ge\int_{\Omega_j}v(t_1)u_j(t_1)\,d\mu_{g(t_1)}
            +\int_{t_1}^{t_2}\int_{\Omega_j}qu_j\,d\mu_{g(t)}\,dt.
    \]
    The zero extensions of $u_j$ increase to $u$ on $M\times[0,T]$.
    Since $v,q\ge0$, the result follows by monotone convergence as
            $j\to\infty$.
\end{proof}

\begin{proposition}
    \label{prop scalar comparison}
    Assume that $\mathcal R_{g(t)}\ge-K_*$ on $M\times[0,T]$ for
            some $K_*\ge0$.
    Then $[|\mathcal R|]_t<+\infty$ for every $t\in[0,T]$,
            and, for $0\le t_1<t_2\le T$,
    \begin{equation}
        \label{eq scalar energy}
        [\mathcal R]_{t_2}-[\mathcal R]_{t_1}
        \ge2\int_{t_1}^{t_2}[|\Ric|^2]_t\,dt.
    \end{equation}
    In particular,
    \begin{equation}
        \label{eq ricci integrable}
        \int_0^T[|\Ric|^2]_t\,dt
        \le\frac12\bigl([\mathcal R]_T+K_*\bigr)<+\infty.
    \end{equation}
    The function $t\mapsto[\mathcal R]_t$ is
            nondecreasing on $[0,T]$ and left-continuous on $(0,T]$, with
    \begin{equation}
        \label{bdd ricci integrable}
        -K_*\le[\mathcal R]_t
            \le[\mathcal R]_T.
    \end{equation}
\end{proposition}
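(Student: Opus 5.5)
Apply Lemma~\ref{lem positive comparison} to the shifted scalar curvature $v:=\mathcal R_{g(t)}+K_*$. This function is smooth on $M\times[0,T]$ and nonnegative by hypothesis. Along the Ricci flow, $\partial_t\mathcal R=\Delta\mathcal R+2|\Ric|^2$, so
\[
    \Box v=2|\Ric|^2=:q\ge0 .
\]
The lemma then gives, for $0\le t_1<t_2\le T$,
\[
    [\mathcal R+K_*]_{t_2}\ge[\mathcal R+K_*]_{t_1}+2\int_{t_1}^{t_2}[|\Ric|^2]_t\,dt,
\]
with all quantities in $[0,+\infty]$.

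\emph{Finiteness.} At $t=T$ we have $u(T)=\phi^2$, which has compact support, so $[\mathcal R+K_*]_T$ is finite. Taking $t_2=T$, every $[\mathcal R+K_*]_{t_1}$ is finite, and so is $\int_0^T[|\Ric|^2]\,dt$. Since $|\mathcal R|\le\mathcal R+2K_*$ and $[1]_t=1$ by conservation of mass \eqref{eq conjugate equation}, we get $[|\mathcal R|]_t<\infty$. We may then subtract $K_*=[K_*]_t$ from both sides, which yields \eqref{eq scalar energy}.

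\emph{Consequences of \eqref{eq scalar energy}.} Choosing $t_1=0$ and $t_2=T$, and using $[\mathcal R]_0\ge-K_*$, gives \eqref{eq ricci integrable}. Monotonicity follows because the right side of \eqref{eq scalar energy} is nonnegative. The bound \eqref{bdd ricci integrable} follows from monotonicity together with $[\mathcal R]_t\ge-K_*[1]_t=-K_*$.

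\emph{Left-continuity.} This is the main point needing care. Fix $t_0\in(0,T]$ and $t\uparrow t_0$. Monotonicity gives
\[
    \limsup_{t\to t_0^-}[\mathcal R]_t\le[\mathcal R]_{t_0}.
\]
For the reverse inequality, apply Fatou's lemma to the nonnegative functions $(\mathcal R+K_*)u$, written with respect to a fixed background measure. Near $t_0$ the integrands converge pointwise: for $t_0<T$ this is by smoothness of $u$, $g$ and $\mathcal R$, and for $t_0=T$ by the local smooth convergence $u(t)\to\phi^2$ established above. The volume densities $d\mu_{g(t)}/d\mu_{g(t_0)}$ also converge locally uniformly. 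Fatou's lemma then gives
\[
    \liminf_{t\to t_0^-}[\mathcal R+K_*]_t\ge[\mathcal R+K_*]_{t_0}.
\]
Finally, subtract $K_*$ using mass conservation.

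The same Fatou argument in fact works from either side. Only left-continuity is claimed, however, and monotonicity gives the matching upper bound only from the left, which is why the statement restricts to left limits.
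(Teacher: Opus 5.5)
Your proposal is correct and follows essentially the same route as the paper. You apply Lemma~\ref{lem positive comparison} to $\mathcal R+K_*$, get finiteness from the compact support of $\phi$ at $t=T$, cancel $K_*$ using $[1]_t=1$, and obtain left continuity by combining the Fatou lower semicontinuity with monotonicity.
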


\begin{proof}
    Since
    \[
        \mathcal R+K_*\ge0
        \quad\text{and}\quad\Box(\mathcal R+K_*)=2|\Ric|^2\ge0,
    \]
    by applying Lemma~\ref{lem positive comparison}  to $\mathcal R+K_*$,  we have for every $0\le t<T$,
    \begin{align*}
        0\le{}&[\mathcal R+K_*]_t
            +2\int_t^T[|\Ric|^2]_\tau\,d\tau\\
        \le{}&\int_M(\mathcal R_{g(T)}+K_*)\phi^2
                    \,d\mu_{g(T)}<+\infty.
    \end{align*}
    The last integral is finite because $\phi$ has compact support.
    Since $|\mathcal R|\le\mathcal R+2K_*$ and $[1]_t=1$,
    \[
        [|\mathcal R|]_t
        \le[\mathcal R+K_*]_t+K_*<+\infty.
    \]
    Thus $\mathcal R$ is integrable with respect to $d\nu_t$ and
            $[\mathcal R+K_*]_t=[\mathcal R]_t+K_*$.
    We apply Lemma~\ref{lem positive comparison} again on $[t_1,t_2]$
            and cancel $K_*$ to obtain \eqref{eq scalar energy}.
    Monotonicity and \eqref{bdd ricci integrable} follow.
    By taking $t_1=0$, $t_2=T$, we obtain \eqref{eq ricci integrable}.

    The local smoothness of $u$ up to $T$ allows us to apply Fatou's lemma
            for any $t_k\to t\in[0,T]$:
    \begin{align*}
        [\mathcal R]_t+K_*
        &=\int_M\lim_{k\to\infty}
            \left((\mathcal R(x,t_k)+K_*)u(x,t_k)
                \frac{d\mu_{g(t_k)}}{d\mu_{g(t)}}(x)\right)
                d\mu_{g(t)}(x)\\
        &\le\liminf_{k\to\infty}
            \bigl([\mathcal R]_{t_k}+K_*\bigr).
    \end{align*}
    By this lower semicontinuity and monotonicity,  we have for $0<t\le T$,
    \[
        [\mathcal R]_t
        \le\liminf_{s\to t^-}[\mathcal R]_s
        \le\limsup_{s\to t^-}[\mathcal R]_s
        \le[\mathcal R]_t.
    \]
    This proves left continuity.
\end{proof}

By Proposition~\ref{prop scalar lower bound}, the hypothesis of
        Proposition~\ref{prop scalar comparison} follows from
        $\mathcal R_{g(0)}\ge-K_*$.
The scalar curvature average has bounded variation and is
        differentiable almost everywhere, absolute continuity is not asserted.

On a closed Ricci flow, the corresponding classical identity is
\[
    \frac{d}{dt}\int_M\mathcal R u\,d\mu_{g(t)}
    =2\int_M|\Ric|^2u\,d\mu_{g(t)},
\]
see \cite[equations~(3-10) and~(4-19)]{GPT13PJM}.
For the corresponding scalar heat kernel representation under complete
        Ricci flow with bounded curvature, see \cite[Chapter~26.1]{CGIKLN10Book}
        and \cite[Section~3.2, equation~(3-14)]{LNN23GT} for example.

\subsection{An auxiliary equation for vector fields}

In this subsection, we will investigate weighted integral identities about an auxiliary vector field, which will be used in
        Section~\ref{sec:entropy} to establish energy monotonicity.

For vector fields $Y$ and scalar functions $f$, write
\[
    \|Y\|_{\infty,s}:=\sup_M|Y|_{g(s)},
    \qquad \|f\|_\infty:=\sup_M|f|.
\]

The evolution equation \eqref{eq vector heat} below is the vector-field form of
        the Hodge heat equation used by Ilmanen-Knopf
        \cite[Section~2]{IK03MRL}, it also appears in Brendle's study of
        rotational symmetry \cite[Section~5]{Brendle20Acta}.

\begin{lemma}
    \label{lem vector existence}
    Let $0\le s<T$ and $X_s\in C_c^\infty(TM)$.
    There is a smooth solution of
    \begin{equation}
        \label{eq vector heat}
        \partial_tX=\Delta_{g(t)}X+\Ric_{g(t)}(X),\qquad X(s)=X_s,
    \end{equation}
    on $M\times[s,T]$ satisfying
    \begin{equation}
        \label{eq vector bound}
        |X(t)|_{g(t)}\le\|X_s\|_{\infty,s}.
    \end{equation}
    Moreover,
    \begin{align}
        \label{eq vector norm}
        \Box|X|_{g(t)}^2&=-2|\nabla X|_{g(t)}^2,\\
        \label{eq vector divergence}
        \Box\diver_{g(t)}X&=2\langle\Ric,\nabla X\rangle_{g(t)}.
    \end{align}
\end{lemma}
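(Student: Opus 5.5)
The plan is to construct $X$ by Dirichlet exhaustion, prove the two evolution identities by direct computation, and get the bound \eqref{eq vector bound} from the maximum principle in Proposition~\ref{prop quadratic maximum}.

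\textbf{Identities.} I would do these first, since they are pointwise and hold for any smooth solution. Under Ricci flow, $\partial_t g=-2\Ric$, so
\[
    \partial_t|X|^2=2\Ric(X,X)+2\langle\partial_tX,X\rangle .
\]
Substituting \eqref{eq vector heat} gives
\[
    \partial_t|X|^2=2\langle\Delta X,X\rangle+4\Ric(X,X).
\]
This does not yet match \eqref{eq vector norm}. So the convention must be checked carefully: with $(\Ric(X))^i=g^{ij}R_{jk}X^k$ and $\partial_t X=\Delta X+\Ric(X)$ for the vector field, we have $\partial_t X^i=\Delta X^i+R^i_kX^k$. Meanwhile $\partial_t(g_{ij}X^iX^j)=-2R_{ij}X^iX^j+2g_{ij}X^i\partial_tX^j$. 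Hence
\[
    \partial_t|X|^2=-2\Ric(X,X)+2\langle\Delta X,X\rangle+2\Ric(X,X)=2\langle \Delta X,X\rangle .
\]
Together with $\Delta|X|^2=2\langle\Delta X,X\rangle+2|\nabla X|^2$, this yields \eqref{eq vector norm}.

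For \eqref{eq vector divergence}, I would use the variation of the Christoffel symbols,
\[
    \partial_t\Gamma^k_{ij}=-(\nabla_iR^k_j+\nabla_jR^k_i-\nabla^kR_{ij}).
\]
This gives $\partial_t\diver X=\diver(\partial_tX)-X^i\nabla_i\mathcal R$, using the contracted Bianchi identity. Next I commute derivatives:
\[
    \diver\Delta X=\Delta\diver X+\nabla_i(R^i_kX^k)+\langle\Ric,\nabla X\rangle .
\]
This is the standard Bochner-type commutator, obtained via $\nabla_i\nabla_j\nabla_i X^i$ manipulations. Then $\diver\Ric(X)=\tfrac12 X(\mathcal R)+\langle\Ric,\nabla X\rangle$. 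Collecting terms, the $X(\mathcal R)$ contributions cancel ($\tfrac12+\tfrac12-1$), leaving $2\langle\Ric,\nabla X\rangle$. I would verify the constants once carefully, but the structure is exactly that of the commutation $\diver\circ(\Delta+\Ric)=\Delta\circ\diver$ for the Hodge Laplacian on $1$-forms, with the extra term coming from the time derivative of the metric.

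\textbf{Existence.} Choose a smooth exhaustion $\Omega_j\supset\supp X_s$. On each $\overline\Omega_j\times[s,T]$, solve the linear parabolic system \eqref{eq vector heat} with Dirichlet data $X=0$ on $\partial\Omega_j$ and initial data $X_s$; this is standard linear parabolic theory for strongly parabolic systems with diagonal principal part. Identity \eqref{eq vector norm} holds for $X_j$ on $\Omega_j$, so $\Box|X_j|^2\le0$. Since $|X_j|^2=0$ on the lateral boundary, the scalar maximum principle gives
\[
    |X_j|^2\le\|X_s\|_{\infty,s}^2 .
\]
This uniform bound is independent of $j$. Local interior parabolic estimates for the system on $\Omega_k\times[s,T]$, which is smooth up to $t=s$ because $X_s$ is smooth with compact support and the initial data are fixed, then give $C^\infty$ bounds on compact sets independent of $j$. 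A diagonal subsequence converges in $C^\infty_{\mathrm{loc}}$ to a smooth solution $X$ on $M\times[s,T]$. The bound \eqref{eq vector bound} passes to the limit, and the identities hold for $X$ by the pointwise computation above.

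The main obstacle is the sign and constant bookkeeping in \eqref{eq vector divergence}, since errors in the commutator of $\diver$ and $\Delta$ are easy to make. I would double-check it by working in normal coordinates at a point, or by comparing with the known fact that $\omega=X^\flat$ satisfies the Hodge heat equation. As an alternative to the exhaustion argument for the bound, Proposition~\ref{prop quadratic maximum} could also be applied to $w=|X|^2-\|X_s\|^2_{\infty,s}$, though that route requires continuity of $w$ globally; the exhaustion argument avoids this.
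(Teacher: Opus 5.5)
\textbf{The divergence identity fails as written.} Your overall route is the paper's: Dirichlet exhaustion, the uniform bound from \eqref{eq vector norm} and the scalar maximum principle on each $\Omega_j$, local Schauder estimates up to $t=s$, a diagonal limit, and then pointwise identities via $\partial_t\Gamma^i_{ij}=-\nabla_j\mathcal R$, the contracted Bianchi identity, and a commutation of $\diver$ with $\Delta$. The existence part and the norm identity (after your self-correction) are fine. The problem is the commutator you state,
\[
    \diver\Delta X=\Delta\diver X+\nabla_i(R^i{}_kX^k)+\langle\Ric,\nabla X\rangle,
\]
which is false. Taken literally, and combined with $\diver\Ric(X)=\tfrac12X(\mathcal R)+\langle\Ric,\nabla X\rangle$ and $\partial_t\diver X=\diver(\partial_tX)-X(\mathcal R)$, it gives
\[
    \Box\diver X
    =\bigl(\tfrac12X(\mathcal R)+2\langle\Ric,\nabla X\rangle\bigr)
    +\bigl(\tfrac12X(\mathcal R)+\langle\Ric,\nabla X\rangle\bigr)-X(\mathcal R)
    =3\langle\Ric,\nabla X\rangle .
\]
So your tally ``leaving $2\langle\Ric,\nabla X\rangle$'' does not follow from your own formulas. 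Your structural check also has the wrong sign: the correct commutation is $\diver\circ(\Delta-\Ric)=\Delta\circ\diver$, not $\diver\circ(\Delta+\Ric)=\Delta\circ\diver$.

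\textbf{The fix.} The correct identity is $\diver\Delta X=\Delta\diver X+\nabla_i(R^i{}_kX^k)$, with no extra term. In indices, commute $\nabla_i$ and $\nabla^j$ on the $(1,1)$-tensor $\nabla_jX^i$. The curvature term from the upper index is $+\langle\Ric,\nabla X\rangle$ and the one from the lower index is $-\langle\Ric,\nabla X\rangle$. They cancel, so
\[
    \nabla_i\nabla^j\nabla_jX^i=\nabla^j\nabla_i\nabla_jX^i
    =\nabla^j\bigl(\nabla_j\diver X+R_{jk}X^k\bigr).
\]
Keeping only one of the two curvature terms is the likely source of your extra term. The paper obtains the same identity invariantly: the Weitzenb\"ock formula $(\Delta X-\Ric(X))^\flat=-(d\delta+\delta d)X^\flat$ together with $\delta^2=0$ gives $\diver(\Delta X-\Ric(X))=\Delta\diver X$. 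This is consistent with your correct remark that $X^\flat$ solves the Hodge heat equation $\partial_tX^\flat=(\Delta X-\Ric(X))^\flat$. With the corrected commutator,
\[
    \Box\diver X=\diver\bigl(\Delta X+\Ric(X)\bigr)-\Delta\diver X-X(\mathcal R)
    =2\diver\bigl(\Ric(X)\bigr)-X(\mathcal R)=2\langle\Ric,\nabla X\rangle,
\]
which is \eqref{eq vector divergence}.

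A minor point: your aside about applying Proposition~\ref{prop quadratic maximum} to $w=|X|^2-\|X_s\|_{\infty,s}^2$ does not work as stated. The identity $\Box w=-2|\nabla X|^2$ supplies no $-\beta w^2$ term. The exhaustion argument you actually use is the right one.
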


\begin{proof}
    For any smooth solution of \eqref{eq vector heat},
    \begin{align*}
        \partial_t|X|^2
        &=-2\Ric(X,X)+2\langle\partial_tX,X\rangle
            =2\langle\Delta X,X\rangle,\\
        \Delta|X|^2
        &=2\langle\Delta X,X\rangle+2|\nabla X|^2.
    \end{align*}
    Subtracting these identities, we obtain \eqref{eq vector norm}.

    If $M$ is compact, the solution exists by standard linear parabolic theory.
    Otherwise, solve \eqref{eq vector heat} with zero Dirichlet boundary
            values on a smooth exhaustion $\{\Omega_j\}$ of $M$ with
            $\supp X_s\subset\subset\Omega_1$.
    By \eqref{eq vector norm} and the scalar maximum principle, these
            solutions satisfy \eqref{eq vector bound} uniformly in $j$.
    For each fixed $k$ and $j\ge k+1$, the system has smooth coefficients
            and is uniformly parabolic on $\Omega_{k+1}\times[s,T]$.
    By local parabolic Schauder estimates for this system, including
            estimates at $t=s$ with the common smooth initial data $X_s$,
            we obtain bounds of every order on
            $\overline{\Omega}_k\times[s,T]$ independent of $j$.
    Therefore, we can use the Arzel\`a-Ascoli theorem and a standard diagonal
            argument to extract a subsequence converging locally smoothly
            to a solution of \eqref{eq vector heat} on $M\times[s,T]$.

    To prove \eqref{eq vector divergence}, write $X^\flat=g(t)(X,\cdot)$
            and let $\delta$ be the formal adjoint of $d$
            with respect to $g(t)$.
    By the Bochner-Weitzenb\"ock formula for one-forms, we have
    \[
        (\Delta X-\Ric(X))^\flat=-(d\delta+\delta d)X^\flat,
    \]
    see \cite[p.~17 and Exercise~1.43, p.~20]{CLN06Book}.
    Since $\delta X^\flat=-\diver X$, $\delta^2=0$, and
            $\delta df=-\Delta f$, we have
    \[
        \diver(\Delta X-\Ric(X))
        =\delta(d\delta+\delta d)X^\flat
        =\delta d(\delta X^\flat)=\Delta\diver X.
    \]
    In local coordinates, we have
    \[
        \partial_t\Gamma^i_{ij}
        =-g^{i\ell}\bigl(\nabla_iR_{j\ell}
                +\nabla_jR_{i\ell}-\nabla_\ell R_{ij}\bigr)
        =-\nabla_j\mathcal R.
    \]
    Consequently,
    \begin{align*}
        \partial_t\diver X
        &=\partial_t\bigl(\partial_iX^i+\Gamma^i_{ij}X^j\bigr)\\
        &=\diver(\partial_tX)+(\partial_t\Gamma^i_{ij})X^j\\
        &=\diver(\partial_tX)-\langle\grad\mathcal R,X\rangle.
    \end{align*}
    By the contracted Bianchi identity, we have
    \begin{align*}
        \diver(\Ric(X))
        &=\nabla_i\bigl(R^i{}_jX^j\bigr)\\
        &=(\nabla^iR_{ij})X^j+R_{ij}\nabla^iX^j\\
        &=\frac12\langle\grad\mathcal R,X\rangle
            +\langle\Ric,\nabla X\rangle.
    \end{align*}
    Combining these identities with \eqref{eq vector heat}, we obtain
    \begin{align*}
        \Box\diver X
        &=\diver(\Delta X+\Ric(X))-\Delta\diver X
            -\langle\grad\mathcal R,X\rangle\\
        &=2\diver(\Ric(X))-\langle\grad\mathcal R,X\rangle\\
        &=2\langle\Ric,\nabla X\rangle.
    \end{align*}
    This completes the proof.
\end{proof}

\begin{lemma}
    \label{lem divergence bound}
    Let $X$ be a solution given by Lemma~\ref{lem vector existence}.
    Assume that $\mathcal R_{g(t)}\ge-K_*$ on $M\times[s,T]$ for some
            $K_*\ge0$.
    For every $\lambda>0$, on $M\times[s,T]$ we have
    \begin{equation}
        \label{eq divergence strong}
        \begin{aligned}
            |\diver_{g(t)}X|
            &\le\|\diver_{g(s)}X_s\|_\infty
                +\lambda(\mathcal R_{g(t)}+K_*)\\
            &\qquad+\frac{\|X_s\|_{\infty,s}^2-|X|_{g(t)}^2}{\lambda}.
        \end{aligned}
    \end{equation}
    Consequently,
    \begin{equation}
        \label{eq divergence bound}
        |\diver_{g(t)}X|\le\|\diver_{g(s)}X_s\|_\infty
            +2\|X_s\|_{\infty,s}\sqrt{\mathcal R_{g(t)}+K_*}.
    \end{equation}
\end{lemma}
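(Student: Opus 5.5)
The plan is to apply Proposition~\ref{prop quadratic maximum} to the auxiliary function
\[
    w:=\pm\diver X-\lambda(\mathcal R+K_*)+\frac{|X|^2}{\lambda}-c,
    \qquad c:=\|\diver_{g(s)}X_s\|_\infty+\frac{\|X_s\|_{\infty,s}^2}{\lambda}.
\]
Showing $w\le0$ for both signs is exactly \eqref{eq divergence strong}.

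First, compute $\Box w$ from the evolution identities already available:
\begin{align*}
    \Box\diver X&=2\langle\Ric,\nabla X\rangle\quad\text{(Lemma~\ref{lem vector existence})},\\
    \Box\mathcal R&=2|\Ric|^2,\\
    \Box|X|^2&=-2|\nabla X|^2 .
\end{align*}
This gives
\[
    \Box w=\pm2\langle\Ric,\nabla X\rangle-2\lambda|\Ric|^2-\frac{2}{\lambda}|\nabla X|^2 .
\]
By Cauchy--Schwarz and Young's inequality,
\[
    2|\langle\Ric,\nabla X\rangle|\le2|\Ric||\nabla X|\le\lambda|\Ric|^2+\lambda^{-1}|\nabla X|^2 .
\]
Hence $\Box w\le0$. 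Here the mixed, nonsymmetric pairing is harmless, since $|\langle A,B\rangle|\le|A||B|$ holds for any two-tensors.

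Second, check the data at $t=s$. There $|X|^2\le\|X_s\|^2_{\infty,s}$, $\mathcal R+K_*\ge0$, and $\pm\diver X_s\le\|\diver X_s\|_\infty$. So $w(\cdot,s)\le0$. Note that $w$ is smooth up to $t=s$, since $X$ is smooth on $M\times[s,T]$.

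The main obstacle is that Proposition~\ref{prop quadratic maximum} requires $\Box w\le-\beta w^2$ where $w>0$, whereas we only have $\Box w\le0$. The maximum principle without curvature bounds is stated only in the quadratic form. My first attempt is to use the same $e^{-t}$ trick as in the proof of \eqref{eq heat conservation}. Set $\tilde w:=e^{-(t-s)}\min\{w,1\}$, or better, compose with a bounded concave increasing function such as $\psi(w)=w/(1+w)$ on $w>0$. For $\psi(w)=w/(1+w)$ we get
\[
    \Box\psi=\psi'\Box w+\psi''|\nabla w|^2\le0,
\]
with $0\le\psi<1$. Then $e^{-(t-s)}\psi$ satisfies
\[
    \Box\bigl(e^{-(t-s)}\psi\bigr)\le-e^{-(t-s)}\psi\le-\bigl(e^{-(t-s)}\psi\bigr)^2
\]
wherever it is positive. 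It is continuous and vanishes at $t=s$, so Proposition~\ref{prop quadratic maximum} with $K=0$, $\beta=1$ gives $\psi\le0$, i.e.\ $w\le0$. Smoothness of $\psi(w)$ only matters where $w>0$, which is all the proposition requires. One may instead use the globally smooth $\psi(w)=\log(1+e^{w})$-type substitute if needed.

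Finally, \eqref{eq divergence bound} follows from \eqref{eq divergence strong}. Drop the nonpositive term $-|X|^2/\lambda$ and optimize the right-hand side over $\lambda$. If $\mathcal R+K_*>0$, take $\lambda=\|X_s\|_{\infty,s}/\sqrt{\mathcal R+K_*}$, which gives $2\|X_s\|_{\infty,s}\sqrt{\mathcal R+K_*}$. If $\mathcal R+K_*=0$ at a point, let $\lambda\to\infty$. If $X_s\equiv0$, then $X\equiv0$ by \eqref{eq vector bound} and the bound is trivial.
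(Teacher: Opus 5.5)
Your computation of $\Box w$ and the check $w(\cdot,s)\le0$ are correct, but the maximum-principle step fails.

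\textbf{Where the step fails.} The chain rule has the opposite sign:
\[
    \Box\psi(w)=\psi'(w)\Box w-\psi''(w)|\nabla w|^2 .
\]
For a concave $\psi$ such as $w/(1+w)$, the gradient term is therefore nonnegative. So $\psi(w)$ is in general not a subsolution; concave functions of solutions are supersolutions, e.g.\ $\Box\log v=|\nabla\log v|^2$ for positive solutions $v$. The truncation $\min\{w,1\}$ fails for the same reason. Convex increasing choices such as $\log(1+e^w)$ do preserve subsolutions, but they are unbounded. The $e^{-(t-s)}$ trick needs $e^{-(t-s)}\psi\le1$ to turn $-\tilde w$ into $-\tilde w^2$, so it is unavailable.

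In fact no such device can work. Your argument uses only $\Box w\le0$ and $w(\cdot,s)\le0$. Apply it to Tychonoff's nonzero solution $v$ of the heat equation on the static flat flow $(\mathbb R^n,g_{\mathrm E})$ with $v(\cdot,0)=0$, and to $-v$: it would give $v\equiv0$, which is false. This is exactly why only the quadratic form of the maximum principle is available without growth or curvature assumptions.

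\textbf{The missing idea.} You discarded precisely the term that supplies the quadratic absorption. Young's inequality, combined with the trace inequality $(\diver X)^2=(\tr\nabla X)^2\le n|\nabla X|^2$, actually gives
\[
    \Box w\le-\lambda|\Ric|^2-\lambda^{-1}|\nabla X|^2
    \le-\frac{(\diver X)^2}{n\lambda}.
\]
The quantities $\lambda(\mathcal R+K_*)$, $\lambda^{-1}(\|X_s\|_{\infty,s}^2-|X|^2)$ and $\|\diver_{g(s)}X_s\|_\infty$ are nonnegative on $M\times[s,T]$; the middle one by \eqref{eq vector bound}. Hence $\pm\diver X\ge w$ there. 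Wherever $w>0$ you then get $(\diver X)^2\ge w^2$, and so $\Box w\le-w^2/(n\lambda)$. Proposition~\ref{prop quadratic maximum} with $\beta=1/(n\lambda)$ and $K=0$ gives $w\le0$.

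This is the paper's proof. The paper leaves the constant $\|\diver_{g(s)}X_s\|_\infty$ out of $w$ and uses it as $K$ instead. Note that this step is where $\mathcal R\ge-K_*$ is needed for $t>s$, not only at $t=s$. Your final optimization over $\lambda$ is fine.
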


\begin{proof}
    Fix $\lambda>0$ and set
    \[
        w:=\diver X-\lambda(\mathcal R+K_*)
            -\frac{\|X_s\|_{\infty,s}^2-|X|^2}{\lambda}\le\diver X.
    \]
    By \eqref{eq vector norm}, \eqref{eq vector divergence}, and
            $\Box\mathcal R=2|\Ric|^2$, we obtain
    \begin{align*}
        \Box w
        &=2\langle\Ric,\nabla X\rangle
            -2\lambda|\Ric|^2-2\lambda^{-1}|\nabla X|^2\\
        &\le-\lambda|\Ric|^2-\lambda^{-1}|\nabla X|^2
        \le-\frac{(\diver X)^2}{n\lambda},
    \end{align*}
    where we used $|\diver X|^2=|\tr\nabla X|^2\le n|\nabla X|^2$
            in the last step.
    Hence $\Box w\le-w^2/(n\lambda)$ wherever $w>0$.
    Since $w(s)\le\|\diver_{g(s)}X_s\|_\infty$,
            by Proposition~\ref{prop quadratic maximum}, we obtain
            $w\le\|\diver_{g(s)}X_s\|_\infty$.
    By the same argument for $-X$, we obtain \eqref{eq divergence strong}.
    We omit the term $-|X|^2/\lambda$ and take the infimum over
            $\lambda>0$ to obtain \eqref{eq divergence bound}.
    This completes the proof.
\end{proof}

\begin{proposition}
    \label{prop exact pairings}
    Let $X$ be a solution given by Lemma~\ref{lem vector existence}.
    The function $t\mapsto[|X|^2]_t$ is absolutely continuous on $[s,T]$,
            and, for every $s\le t_1<t_2\le T$,
    \begin{equation}
        \label{eq norm pairing}
        [|X|^2]_{t_2}-[|X|^2]_{t_1}
        =-2\int_{t_1}^{t_2}[|\nabla X|^2]_t\,dt.
    \end{equation}
    If we further assume that $\mathcal R_{g(t)}\ge-K_*$ on $M\times[s,T]$ for
            some $K_*\ge0$, then $\diver_{g(t)}X(t)\in L^1(M,d\nu_t)$ for
            every $t\in[s,T]$, the function $t\mapsto[\diver X]_t$ is
            absolutely continuous on $[s,T]$, and
    \begin{equation}
        \label{eq divergence pairing}
        [\diver X]_{t_2}-[\diver X]_{t_1}
        =2\int_{t_1}^{t_2}
            \bigl[\langle\Ric,\nabla X\rangle\bigr]_t\,dt.
    \end{equation}
\end{proposition}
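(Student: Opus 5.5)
We prove both identities by Dirichlet exhaustion, following the proof of Lemma~\ref{lem positive comparison}, but now track the boundary terms exactly instead of discarding them by sign. Take the exhaustion $\{\Omega_j\}$ with $\supp\phi\subset\subset\Omega_1$ and the Dirichlet solutions $u_j\uparrow u$. For a smooth function $v$ on $M\times[s,T]$, Green's formula gives
\[
    \int_{\Omega_j}v(t_2)u_j(t_2)\,d\mu_{g(t_2)}-\int_{\Omega_j}v(t_1)u_j(t_1)\,d\mu_{g(t_1)}
    =\int_{t_1}^{t_2}\!\!\int_{\Omega_j}(\Box v)u_j\,d\mu\,dt
        -\int_{t_1}^{t_2}\!\!\int_{\partial\Omega_j}v\,\partial_Vu_j\,d\sigma\,dt.
\]
The boundary term is controlled by taking $v\equiv1$. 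Since $\Box1=0$, this yields
\[
    \int_{t_1}^{t_2}\int_{\partial\Omega_j}(-\partial_Vu_j)\,d\sigma\,dt
    =\int_{\Omega_j}u_j(t_2)-\int_{\Omega_j}u_j(t_1).
\]
Here $\int u_j(t_2)=1$ when $t_2=T$. By monotone convergence and conservation of mass \eqref{eq conjugate equation}, $\int_{\Omega_j}u_j(t)\,d\mu\to1$ for every $t$. Hence the total boundary flux $\int_{t_1}^{t_2}\int_{\partial\Omega_j}|\partial_Vu_j|\to0$, since $-\partial_Vu_j\ge0$. Consequently, the boundary term tends to $0$ for any \emph{bounded} $v$.

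\textbf{Norm identity.} Apply this with $v=|X|^2$, which is bounded by \eqref{eq vector bound}, and $\Box v=-2|\nabla X|^2$ by \eqref{eq vector norm}. The left side converges to $[|X|^2]_{t_2}-[|X|^2]_{t_1}$ by dominated convergence, since $u_j\le u$ and $v$ is bounded. The spacetime integral of $|\nabla X|^2u_j$ increases to the one with $u$ by monotone convergence. We therefore obtain \eqref{eq norm pairing}, with finiteness of $\int_{t_1}^{t_2}[|\nabla X|^2]$ coming for free. Absolute continuity follows from the integral representation.

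\textbf{Divergence identity.} First, \eqref{eq divergence bound} together with Proposition~\ref{prop scalar comparison} gives $|\diver X|\le C+2\|X_s\|\sqrt{\mathcal R+K_*}\le C'+(\mathcal R+K_*)$, which is $\nu_t$-integrable. Next, $|\langle\Ric,\nabla X\rangle|\le\frac12(|\Ric|^2+|\nabla X|^2)$ is spacetime integrable against $u$, by \eqref{eq ricci integrable} and \eqref{eq norm pairing}. The remaining difficulty is that $\diver X$ is unbounded, so the boundary term does not vanish automatically.

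To handle this, use the sharper bound \eqref{eq divergence strong}. Write $\diver X=v_+-v_-$ with
\[
    v_\pm:=A+\lambda(\mathcal R+K_*)+\lambda^{-1}(\|X_s\|^2-|X|^2)\pm\diver X\ge0,
\]
and split the identity accordingly. The terms involving the constant and $|X|^2$ are handled exactly as above. For $\mathcal R+K_*\ge0$, the exhaustion identity in Lemma~\ref{lem positive comparison} shows that the boundary term $-\int\int(\mathcal R+K_*)\partial_Vu_j\ge0$ converges to the defect
\[
    D:=[\mathcal R]_{t_2}-[\mathcal R]_{t_1}-2\int_{t_1}^{t_2}[|\Ric|^2]\ge0.
\]
This defect is finite but not known to be zero. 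Passing to the limit along a subsequence, the boundary term for $\diver X$ is therefore bounded in absolute value by $\lambda D+o(1)$. The convergence of the interior terms uses dominated convergence, with the domination supplied by the integrable bounds above. Letting $\lambda\to0^+$ after $j\to\infty$ kills the error and gives \eqref{eq divergence pairing}. Absolute continuity again follows from the integral representation.

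\textbf{Main obstacle.} The main obstacle is this last step: making sure the boundary flux of $\diver X$ vanishes even though $\diver X$ is unbounded. We resolve it with the $\lambda$-dependent bound \eqref{eq divergence strong}, whose only unbounded part is $\lambda(\mathcal R+K_*)$. This term's boundary contribution is of order $\lambda D$, with $D$ independent of $\lambda$, so it disappears as $\lambda\to0^+$.
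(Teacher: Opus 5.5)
Your proposal is correct and is essentially the paper's argument. The norm identity is proved by exhaustion, with the boundary flux killed by mass conservation. The divergence identity uses the $\lambda$-dependent bound \eqref{eq divergence strong}, whose only unbounded part produces an error $\lambda D$ with $D=[\mathcal R]_{t_2}-[\mathcal R]_{t_1}-2\int_{t_1}^{t_2}[|\Ric|^2]_t\,dt$ independent of $\lambda$, and this error vanishes as $\lambda\to0^+$.

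The only difference is bookkeeping. The paper applies Lemma~\ref{lem positive comparison} (monotone convergence only) to the nonnegative functions $v_\lambda^\pm$ and cancels the $\lambda^{-1}$ terms via \eqref{eq norm pairing}. You instead bound the boundary flux of $\diver X$ directly, getting $\limsup_j|\cdot|\le\lambda D$ for every $\lambda$, and then use dominated convergence. Note the small slip that $v_+-v_-=2\diver X$, not $\diver X$; it is harmless, since your boundary estimate only uses the pointwise bound $|\diver X|\le A+\lambda(\mathcal R+K_*)+\lambda^{-1}\|X_s\|_{\infty,s}^2$.
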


\begin{proof}
    Fix $s\le t_1<t_2\le T$.
    Consider the exhaustion $\{\Omega_j\}$ and Dirichlet solutions $u_j$
            constructed above with terminal value $u_j(T)=\phi^2$.
    Each $u_j$ is smooth on $\overline{\Omega}_j\times[0,T]$.
    The zero extensions of $u_j$ increase to $u$.
    By monotone convergence and \eqref{eq conjugate equation},
    \[
        \int_{\Omega_j}u_j(t)\,d\mu_{g(t)}\to1
        \quad\text{for every}\quad t\in[s,T].
    \]
    Let $V$ be the outward unit normal with respect to $g(t)$ and let
            $d\sigma_{g(t)}$ be the induced boundary measure.
    Since $u_j\ge0$ and $u_j=0$ on $\partial\Omega_j$, we have
            $\partial_Vu_j\le0$.
    By the conjugate heat equation, \eqref{eq vector norm}, and
            Green's formula, we obtain
    \begin{equation}
        \label{eqn ddt X L2}
        \begin{aligned}
            \frac{d}{dt}\int_{\Omega_j}u_j|X|^2\,d\mu_{g(t)}
            &=-2\int_{\Omega_j}u_j|\nabla X|^2\,d\mu_{g(t)}\\
            &\qquad-\int_{\partial\Omega_j}|X|^2\partial_Vu_j
                    \,d\sigma_{g(t)}.
        \end{aligned}
    \end{equation}
    Similarly,
    \[
        \frac{d}{dt}\int_{\Omega_j}u_j\,d\mu_{g(t)}
        =-\int_{\partial\Omega_j}\partial_Vu_j\,d\sigma_{g(t)}.
    \]
    By this identity and \eqref{eq vector bound},
    \begin{align*}
        0&\le-\int_{t_1}^{t_2}\int_{\partial\Omega_j}
                |X|^2\partial_Vu_j\,d\sigma_{g(t)}\,dt\\
        &\le\|X_s\|_{\infty,s}^2
                \left(\int_{\Omega_j}u_j(t_2)\,d\mu_{g(t_2)}
                -\int_{\Omega_j}u_j(t_1)\,d\mu_{g(t_1)}\right)
        \to0.
    \end{align*}
    Integrate \eqref{eqn ddt X L2} over $[t_1,t_2]$.
    By monotone convergence, we obtain
    \[
        [|X|^2]_{t_2}+2\int_{t_1}^{t_2}[|\nabla X|^2]_t\,dt
        =[|X|^2]_{t_1}\le\|X_s\|_{\infty,s}^2<+\infty.
    \]
    This proves \eqref{eq norm pairing} and the integrability of
            $t\mapsto[|\nabla X|^2]_t$ on $[s,T]$.
    Taking $t_1=s$, we conclude that
            $t\mapsto[|X|^2]_t$ is absolutely continuous.
    In particular,
    \begin{equation}
        \label{eq vector energy}
        \int_s^t[|\nabla X|^2]_\tau\,d\tau
        \le\frac12[|X_s|^2]_s
        \quad\text{for all}\quad t\in[s,T].
    \end{equation}

    Now assume that $\mathcal R_{g(t)}\ge-K_*$ on $M\times[s,T]$.
    For every $t\in[s,T]$, by Lemma~\ref{lem divergence bound}
            and Cauchy-Schwarz, we have
    \begin{align*}
        [|\diver X|]_t
        &\le\|\diver_{g(s)}X_s\|_\infty
            +2\|X_s\|_{\infty,s}[(\mathcal R+K_*)^{1/2}]_t\\
        &\le\|\diver_{g(s)}X_s\|_\infty
            +2\|X_s\|_{\infty,s}
                \bigl([\mathcal R]_t+K_*\bigr)^{1/2}
        <+\infty,
    \end{align*}
    where finiteness follows from Proposition~\ref{prop scalar comparison}.
    Moreover, we also have
    \begin{align*}
        \int_s^T[|\langle\Ric,\nabla X\rangle|]_t\,dt
        &\le\int_s^T[|\Ric|^2]_t^{1/2}
                [|\nabla X|^2]_t^{1/2}\,dt\\
        &\le\left(\int_s^T[|\Ric|^2]_t\,dt\right)^{1/2}
            \left(\int_s^T[|\nabla X|^2]_t\,dt\right)^{1/2}
        <+\infty,
    \end{align*}
    where finiteness follows from \eqref{eq ricci integrable}
            and \eqref{eq vector energy}.
    Hence $[\langle\Ric,\nabla X\rangle]_t$ is absolutely convergent
            for almost every $t$, and
    \[
        \int_s^T\bigl|[\langle\Ric,\nabla X\rangle]_t\bigr|\,dt
        \le\int_s^T[|\langle\Ric,\nabla X\rangle|]_t\,dt
        <+\infty.
    \]

    Fix $s\le t_1<t_2\le T$.
    To prove \eqref{eq divergence pairing}, we apply
            Lemma~\ref{lem positive comparison} to nonnegative functions
            involving both signs of $\diver X$.
    For $\lambda>0$, set
    \[
        v_\lambda^\pm
        :=\|\diver_{g(s)}X_s\|_\infty+\lambda(\mathcal R+K_*)
            +\frac{\|X_s\|_{\infty,s}^2-|X|^2}{\lambda}
            \pm\diver X.
    \]
    By \eqref{eq divergence strong}, we have $v_\lambda^\pm\ge0$.
    By \eqref{eq vector norm}, \eqref{eq vector divergence},
            and $\Box\mathcal R=2|\Ric|^2$, we also have
    \begin{align*}
        \Box v_\lambda^\pm
        &=2\lambda|\Ric|^2+2\lambda^{-1}|\nabla X|^2
            \pm2\langle\Ric,\nabla X\rangle\\
        &\ge\lambda|\Ric|^2+\lambda^{-1}|\nabla X|^2\ge0.
    \end{align*}
    By the above estimates,
    \[
        [v_\lambda^\pm]_{t_i}<+\infty\quad(i=1,2),
        \qquad
        \int_{t_1}^{t_2}[\Box v_\lambda^\pm]_t\,dt<+\infty.
    \]
    By Lemma~\ref{lem positive comparison}, we obtain
    \begin{align*}
        0&\le[v_\lambda^\pm]_{t_2}-[v_\lambda^\pm]_{t_1}
            -\int_{t_1}^{t_2}[\Box v_\lambda^\pm]_t\,dt\\
        &=\lambda\left(
            [\mathcal R]_{t_2}-[\mathcal R]_{t_1}
            -2\int_{t_1}^{t_2}[|\Ric|^2]_t\,dt\right)\\
        &\qquad-\lambda^{-1}\left(
            [|X|^2]_{t_2}-[|X|^2]_{t_1}
            +2\int_{t_1}^{t_2}[|\nabla X|^2]_t\,dt\right)\\
        &\qquad\pm\left(
            [\diver X]_{t_2}-[\diver X]_{t_1}
            -2\int_{t_1}^{t_2}
                [\langle\Ric,\nabla X\rangle]_t\,dt\right).
    \end{align*}
    By \eqref{eq norm pairing}, we therefore obtain
    \[
        \begin{aligned}
            &\left|[\diver X]_{t_2}-[\diver X]_{t_1}
                -2\int_{t_1}^{t_2}
                    [\langle\Ric,\nabla X\rangle]_t\,dt\right|\\
            &\qquad\le\lambda\left(
                [\mathcal R]_{t_2}-[\mathcal R]_{t_1}
                -2\int_{t_1}^{t_2}[|\Ric|^2]_t\,dt\right).
        \end{aligned}
    \]
    By Proposition~\ref{prop scalar comparison}, the right hand side
            is nonnegative and finite.
    Letting $\lambda\to0^+$, we obtain \eqref{eq divergence pairing}.
    In particular, for every $t\in[s,T]$,
    \[
        [\diver X]_t=[\diver X]_s
            +2\int_s^t[\langle\Ric,\nabla X\rangle]_r\,dr.
    \]
    Since the integrand belongs to $L^1([s,T])$, the function
            $t\mapsto[\diver X]_t$ is absolutely continuous.
    This completes the proof.
\end{proof}

Under the scalar curvature lower bound in Proposition~\ref{prop exact pairings},
        define
\[
    J_X(t):=[\mathcal R+2\diver X-|X|^2]_t.
\]
Combining \eqref{eq scalar energy}, \eqref{eq norm pairing}, and
        \eqref{eq divergence pairing}, we obtain, for $s\le t_1<t_2\le T$,
\begin{equation}
    \label{eq vector fisher}
    \begin{split}
        J_X(t_2)-J_X(t_1)
        &\ge2\int_{t_1}^{t_2}
            \bigl[|\Ric|^2+2\langle\Ric,\nabla X\rangle
                +|\nabla X|^2\bigr]_t\,dt\\
        &=2\int_{t_1}^{t_2}[|\Ric+\nabla X|^2]_t\,dt.
    \end{split}
\end{equation}
This is the vector-field counterpart of Perelman's $\mathcal F$-monotonicity
        formula \cite[equation~(1.4)]{Perelman02arXiv}.
$J_X$ will be used in Section~\ref{sec:entropy} to establish the monotonicity for
        Perelman's energy and entropy functionals.

\section{Perelman's energy and entropy}
\label{sec:entropy}

Let $(M^n,g(t))$ be a complete Ricci flow for $t\in[0,T]$,
        and let $u$ be given by \eqref{eq conjugate definition}, with
        $\phi\in C_c^\infty(M)$, $\phi\ge0$, and $\int_M\phi^2\,d\mu_{g(T)}=1$.
We use the weighted averages defined in Section~\ref{sec:preliminaries}.

\subsection{Perelman's energy}

Suppose that $\mathcal R_{g(t)}\ge-K_*$ on $M\times[0,T]$ for some $K_*\ge0$.
For $0\le t<T$, define
\[
    I(t):=[|\nabla\log u|^2]_t
        =\int_M\frac{|\nabla u(t)|_{g(t)}^2}{u(t)}\,d\mu_{g(t)},
    \qquad F(t):=[\mathcal R]_t+I(t).
\]
We have $[\mathcal R]_t<+\infty$ by Proposition~\ref{prop scalar comparison},
        while $I(t)$ is allowed to equal $+\infty$ before we do the analysis.
The quantity $I(t)$ is the Fisher information of $u(t)$ relative to
        $d\mu_{g(t)}$, see \cite[p.~19233]{CM13PNAS}.
We also have
\[
    F(t)=\mathcal F(g(t),-\log u(t)),\qquad
    \mathcal F(g,f):=\int_M(\mathcal R_g+|\nabla f|_g^2)e^{-f}\,d\mu_g,
\]
where $\mathcal F$ is Perelman's energy functional
        \cite[Section~1.1]{Perelman02arXiv}.
At the terminal time, set
\[
    I(T):=4\int_M|\nabla\phi|_{g(T)}^2\,d\mu_{g(T)}
\]
and
\begin{equation}
    \label{eq terminal fisher}
    F(T):=[\mathcal R]_T+I(T)
        =\int_M\left(\mathcal R_{g(T)}\phi^2+4|\nabla\phi|_{g(T)}^2\right)
            \,d\mu_{g(T)}<+\infty.
\end{equation}

\begin{proposition}
    \label{prop fisher finite}
    Assume that $\mathcal R_{g(t)}\ge-K_*$ on $M\times[0,T]$ for
            some $K_*\ge0$.
    Then, for every $t\in[0,T]$,
    \[
        -K_*\le F(t)\le F(T),\qquad 0\le I(t)\le F(T)+K_*.
    \]
    If $X$ is a solution on $M\times[s,T]$ given by
            Lemma~\ref{lem vector existence}, then $J_X(t)\le F(t)$
            for every $t\in[s,T]$, and
    \begin{equation}
        \label{eq fisher representation}
        J_X(t)=F(t)-[|X+\grad\log u|^2]_t
        \quad\text{for}\quad s\le t<T.
    \end{equation}
    Moreover,
    \begin{equation}
        \label{eq terminal energy continuity}
        \lim\limits_{t\to T^-}I(t)=I(T),
        \qquad\lim\limits_{t\to T^-}F(t)=F(T).
    \end{equation}
\end{proposition}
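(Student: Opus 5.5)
The argument rests on the pointwise identity, valid for $t<T$ wherever $u>0$:
\[
    \bigl(2\diver X-|X|^2\bigr)u
    =2\diver(uX)-2\langle X,\grad u\rangle-|X|^2u
    =2\diver(uX)+|\grad\log u|^2u-|X+\grad\log u|^2u .
\]
Everything else is justifying the integration of $\diver(uX)$ against $d\mu_{g(t)}$ on the noncompact manifold.

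\textbf{Step 1: the terminal time.} At $t=T$ I would first show $J_X(T)\le F(T)$ directly. Here $u(T)=\phi^2$ has compact support, so $\int_M\diver(\phi^2X)\,d\mu_{g(T)}=0$. Writing $2\langle X,\grad\phi^2\rangle=4\phi\langle X,\grad\phi\rangle$, we get
\[
    [2\diver X-|X|^2]_T=\int_M\bigl(-4\phi\langle X,\grad\phi\rangle-\phi^2|X|^2\bigr)\,d\mu_{g(T)}\le 4\int_M|\grad\phi|^2\,d\mu_{g(T)}=I(T),
\]
by completing the square.

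\textbf{Step 2: the bounds on $F$ and $I$.} Combining Step 1 with the monotonicity \eqref{eq vector fisher} gives $J_X(t)\le J_X(T)\le F(T)$ for all $t\in[s,T]$. Next I use the variational representation
\[
    I(t)=\sup_{X_s\in C_c^\infty(TM)}[2\diver X_s-|X_s|^2]_t\quad\text{at }t=s<T .
\]
The inequality $\ge$ follows from the identity above once the integral of $\diver(uX_s)$ vanishes, which it does since $X_s$ has compact support; note $I(s)$ may be $+\infty$. For $\le$, approximate $-\grad\log u$ by $-\chi_k\grad\log u$ with cutoffs $\chi_k\uparrow1$, $0\le\chi_k\le1$; the quantity $2\chi_k|\grad\log u|^2-\chi_k^2|\grad\log u|^2-2\langle\grad\chi_k,\grad\log u\rangle$ integrates against $u$ to $I(s)$ in the limit. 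The cross term with $\grad\chi_k$ is handled by choosing $|\grad\chi_k|\le 1/k$ on a complete manifold and using $\int|\grad u|\le \bigl(\int u\bigr)^{1/2}\bigl(\int|\grad u|^2/u\bigr)^{1/2}$, or by a truncation argument if $I(s)=\infty$. Hence
\[
    [\mathcal R]_s+I(s)=\sup_{X_s} J_X(s)\le F(T),
\]
so $F(s)\le F(T)$, and in particular $I(s)<\infty$. Together with $[\mathcal R]_t\ge -K_*$ and $I\ge0$ from Proposition~\ref{prop scalar comparison}, this yields $-K_*\le F\le F(T)$ and $I\le F(T)+K_*$.

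\textbf{Step 3: the identity \eqref{eq fisher representation} for $s\le t<T$.} Now $X(t)$ is not compactly supported, so I must show $\int_M\diver(uX)\,d\mu_{g(t)}=0$. I would integrate against a cutoff $\chi_k$ with $|\grad\chi_k|\le 1/k$. The term $\int\chi_k\diver(uX)=-\int u\langle\grad\chi_k,X\rangle$ tends to $0$ because $|X|\le\|X_s\|_{\infty,s}$ and $\int u=1$. Meanwhile $\int\chi_k(u\diver X+\langle X,\grad u\rangle)$ converges by dominated convergence, using $\diver X\in L^1(d\nu_t)$ from Proposition~\ref{prop exact pairings} and the bound $|X||\grad u|\le C u^{1/2}\,|\grad u|/u^{1/2}\in L^1$, which holds since $I(t)<\infty$ by Step 2. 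Since $[|X+\grad\log u|^2]_t\ge0$, the identity gives $J_X\le F$ for $t<T$; the case $t=T$ is Step 1.

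\textbf{Step 4: continuity at $T$.} This is the main obstacle. For the lower bound, local smooth convergence $u(t)\to\phi^2$ and Fatou's lemma applied to $|\grad u|^2/u$ give $\liminf_{t\to T^-}I(t)\ge I(T)$. Similarly, $[\mathcal R]_t\to[\mathcal R]_T$ by left continuity from Proposition~\ref{prop scalar comparison}. For the upper bound, $\limsup_{t\to T^-}F(t)\le F(T)$ follows from Step 2. The two bounds together give $F(t)\to F(T)$, and then $I(t)=F(t)-[\mathcal R]_t\to I(T)$.
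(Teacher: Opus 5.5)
Your proposal is correct and follows essentially the same route as the paper: the completing-the-square bound $J_X(T)\le F(T)$, the variational representation of $I(s)$ combined with \eqref{eq vector fisher} to get $F(s)\le F(T)$, cutoff integration by parts for \eqref{eq fisher representation}, and Fatou's lemma plus left continuity of $[\mathcal R]_t$ for the terminal limits. One small correction: for the compactly supported field $X_s=-\chi_k\grad\log u(s)$, integration by parts gives exactly $2[\diver X_s]_s-[|X_s|^2]_s=[(2\chi_k-\chi_k^2)|\grad\log u|^2]_s$. There is therefore no cross term involving $\grad\chi_k$, and monotone convergence yields $I(s)$ even when $I(s)=+\infty$, so the unspecified truncation argument is not needed.
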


\begin{proof}
    Fix $0\le s<T$.
    We first prove the variational formula
    \begin{equation}
        \label{eq fisher variational}
        F(s)=[\mathcal R]_s+
            \sup\limits_{Z\in C_c^\infty(TM)}
            \left(2[\diver Z]_s-[|Z|^2]_s\right).
    \end{equation}
    For every $Z\in C_c^\infty(TM)$, by integration by parts, it holds that
    \[
        [\diver Z]_s
        =-\int_M\langle Z,\grad u(s)\rangle\,d\mu_{g(s)}
        =-[\langle Z,\grad\log u(s)\rangle]_s.
    \]
    It follows that
    \[
        2[\diver Z]_s-[|Z|^2]_s
        =[-2\langle Z,\grad\log u(s)\rangle-|Z|^2]_s
        \le [|\nabla\log u|^2]_s=I(s).
    \]
    Taking the supremum over $Z$, we obtain
    \[
        \sup\limits_{Z\in C_c^\infty(TM)}
            \left(2[\diver Z]_s-[|Z|^2]_s\right)\le I(s).
    \]
    Conversely, choose $\zeta_j\in C_c^\infty(M)$ with
            $0\le\zeta_j\le\zeta_{j+1}\le1$ and $\zeta_j\to1$ on $M$,
            and set $Z_j:=-\zeta_j\grad\log u(s)$.
    By monotone convergence,
    \[
        2[\diver Z_j]_s-[|Z_j|^2]_s
        =[(2\zeta_j-\zeta_j^2)|\nabla\log u(s)|^2]_s
        \to I(s).
    \]
    This proves \eqref{eq fisher variational}.

    We then prove $F(s)\le F(T)$.
    Fix any $Z\in C_c^\infty(TM)$, and let $Y$ be a solution
            given by Lemma~\ref{lem vector existence} with $Y(s)=Z$.
    Since $\phi$ has compact support, we have
    \begin{align*}
        J_Y(T)
        &=\int_M\left(\mathcal R\phi^2
                -4\phi\langle Y,\grad\phi\rangle-|Y|^2\phi^2\right)
                \,d\mu_{g(T)}\\
        &=F(T)-\int_M|\phi Y(T)+2\grad\phi|^2\,d\mu_{g(T)}
        \le F(T).
    \end{align*}
    By \eqref{eq vector fisher} and $Y(s)=Z$, we therefore have
    \[
        [\mathcal R]_s+2[\diver Z]_s-[|Z|^2]_s
        =J_Y(s)\le J_Y(T)\le F(T).
    \]
    Since $Z$ is arbitrary, taking the supremum and using
            \eqref{eq fisher variational}, we obtain $F(s)\le F(T)$.
    By $I(s)\ge0$, $F(s)=[\mathcal R]_s+I(s)$,
            and $[\mathcal R]_s\ge-K_*$, it follows that
    \[
        -K_*\le F(s)\le F(T),
        \qquad
        0\le I(s)=F(s)-[\mathcal R]_s\le F(T)+K_*.
    \]
    These bounds also hold at $s=T$, since $I(T)\ge0$,
            $F(T)=[\mathcal R]_T+I(T)$, and $[\mathcal R]_T\ge-K_*$.

    We now prove \eqref{eq fisher representation}.
    By Proposition~\ref{prop exact pairings}, $[|\diver X|]_t<+\infty$.
    By \eqref{eq vector bound}, the above bound for $I(t)$ and Cauchy-Schwarz,
            we have
    \[
        [|\langle X,\grad\log u\rangle|]_t
        \le\|X_s\|_{\infty,s}[|\nabla\log u|]_t
        \le\|X_s\|_{\infty,s}\sqrt{I(t)}<+\infty.
    \]
    Since $X(t)$ need not have compact support, we justify integration
            by parts using cutoffs.
    For each $L>0$, choose a compactly supported Lipschitz cutoff
            $\varphi_L$ with $0\le\varphi_L\le1$,
            $|\nabla\varphi_L|\le2/L$ almost everywhere,
            and $\varphi_L\to1$ pointwise as $L\to+\infty$.
    Then we have
    \[
        [\varphi_L\diver X]_t=-[\varphi_L\langle X,\grad\log u\rangle]_t
                -[\langle X,\grad\varphi_L\rangle]_t,
    \]
    with
    \[
        \bigl|[\langle X,\grad\varphi_L\rangle]_t\bigr|
                \le\frac{2\|X_s\|_{\infty,s}}{L}.
    \]
    Letting $L\to+\infty$, by dominated convergence, we obtain
    \begin{equation}
        \label{eq weighted divergence}
        [\diver X]_t=-[\langle X,\grad\log u\rangle]_t.
    \end{equation}
    Consequently, by the definition of $J_X$,
    \begin{align*}
        J_X(t)&=[\mathcal R]_t
                -2[\langle X,\grad\log u\rangle]_t-[|X|^2]_t\\
            &=F(t)-[|X+\grad\log u|^2]_t\le F(t).
    \end{align*}

    It remains to prove \eqref{eq terminal energy continuity}.
    By local smooth convergence, on $\{\phi>0\}$ we have
    \[
        \lim_{t\to T^-}
            \left(\frac{|\nabla u(t)|_{g(t)}^2}{u(t)}
                \frac{d\mu_{g(t)}}{d\mu_{g(T)}}\right)
        =4|\nabla\phi|_{g(T)}^2.
    \]
    On $\{\phi=0\}$, smoothness and nonnegativity of $\phi$ imply
            $\nabla\phi=0$.
    Since the integrands are nonnegative, we have
    \[
        4|\nabla\phi|_{g(T)}^2=0
        \le\liminf_{t\to T^-}
            \left(\frac{|\nabla u(t)|_{g(t)}^2}{u(t)}
                \frac{d\mu_{g(t)}}{d\mu_{g(T)}}\right).
    \]
    By Fatou's lemma with respect to $d\mu_{g(T)}$, we obtain
    \[
        I(T)=4\int_M|\nabla\phi|_{g(T)}^2\,d\mu_{g(T)}
        \le\liminf_{t\to T^-}I(t).
    \]
    On the other hand, since $I(t)\le F(T)-[\mathcal R]_t$ and $[\mathcal R]_t$
            is left continuous by Proposition~\ref{prop scalar comparison},
            it follows that
    \[
        \limsup_{t\to T^-}I(t)
        \le F(T)-[\mathcal R]_T=I(T).
    \]
    Hence $I(t)\to I(T)$.
    Since $F(t)=[\mathcal R]_t+I(t)$, the same left continuity
            also implies $F(t)\to F(T)$.
\end{proof}

On a closed Ricci flow with a positive conjugate probability density
        $u$, Perelman's formula is
\[
    F'(t)=2\int_M|\Ric-\nabla^2\log u|^2u\,d\mu_{g(t)}
        \ge\frac2nF(t)^2,
\]
see \cite[Section~1.2, equation~(1.4) and the following
        inequality]{Perelman02arXiv}.
The following proposition establishes the differential inequality
        needed here for complete flows under a scalar curvature lower bound.

\begin{proposition}
    \label{prop fisher differential}
    Assume that $\mathcal R_{g(t)}\ge-K_*$ on $M\times[0,T]$ for
            some $K_*\ge0$.
    Then $F$ is nondecreasing on $[0,T]$ and satisfies
    \begin{equation}
        \label{eq fisher differential}
        dF\ge\frac2nF^2\,dt
    \end{equation}
    as an inequality of measures on $(0,T)$.
\end{proposition}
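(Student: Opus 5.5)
Throughout, the tools are the vector-field monotonicity \eqref{eq vector fisher} and the representation \eqref{eq fisher representation}. The first claim is that $F$ is nondecreasing. Fix $0\le s<t\le T$ and any $Z\in C_c^\infty(TM)$, and let $Y$ solve \eqref{eq vector heat} on $[s,T]$ with $Y(s)=Z$. By \eqref{eq vector fisher}, then \eqref{eq fisher representation} (or $J_Y(T)\le F(T)$ when $t=T$), we get
\[
    [\mathcal R]_s+2[\diver Z]_s-[|Z|^2]_s=J_Y(s)\le J_Y(t)\le F(t).
\]
Taking the supremum over $Z$ and using \eqref{eq fisher variational} gives $F(s)\le F(t)$. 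So $F$ is bounded (by Proposition~\ref{prop fisher finite}) and monotone. Hence $dF$ is a nonnegative finite Borel measure on $(0,T)$, and it suffices to show
\[
    F(t)-F(s)\ge\frac2n\int_s^tF(r)^2\,dr
\]
for all $0<s<t<T$.

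For the quantitative inequality, fix $s<t$ and $\varepsilon>0$. Choose $Z=-\zeta\grad\log u(s)$ with a cutoff $\zeta$ as in the proof of \eqref{eq fisher variational}, so that $J_Y(s)\ge F(s)-\varepsilon$. Equivalently, by \eqref{eq fisher representation}, $[|Y+\grad\log u|^2]_s\le\varepsilon$.

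Next apply \eqref{eq vector fisher} together with the trace inequality $|\Ric+\nabla Y|^2\ge\frac1n(\mathcal R+\diver Y)^2$ and Jensen's inequality for the probability measure $\nu_r$. This gives
\[
    J_Y(t)-J_Y(s)\ge\frac2n\int_s^t\bigl([\mathcal R]_r+[\diver Y]_r\bigr)^2\,dr .
\]
The key point is to compare $[\mathcal R]_r+[\diver Y]_r$ with $F(r)$. By \eqref{eq weighted divergence}, $[\diver Y]_r=-[\langle Y,\grad\log u\rangle]_r$. Writing $Y=-\grad\log u+E$ with $E:=Y+\grad\log u$, we get
\[
    [\mathcal R]_r+[\diver Y]_r=F(r)-[\langle E,\grad\log u\rangle]_r .
\]
Cauchy–Schwarz bounds the error by $\sqrt{I(r)}\,[|E|^2]_r^{1/2}$, and $I(r)\le F(T)+K_*$. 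Also, by \eqref{eq fisher representation} and the monotonicity of $J_Y$,
\[
    [|E|^2]_r=F(r)-J_Y(r)\le F(r)-J_Y(s)\le F(r)-F(s)+\varepsilon .
\]

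Combining these, and using $J_Y(t)\le F(t)$ and $J_Y(s)\ge F(s)-\varepsilon$, we obtain
\[
    F(t)-F(s)+\varepsilon\ge\frac2n\int_s^t\Bigl(\bigl(|F(r)|-C\sqrt{F(r)-F(s)+\varepsilon}\bigr)_+\Bigr)^2\,dr,
\]
where $C$ depends only on $F(T)+K_*$. Let $\varepsilon\to0$. Since $F$ is monotone and bounded, $F(r)-F(s)\to F(s^+)-F(s)$ uniformly as $r$ approaches $s$ from above.

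To conclude, partition $[s,t]$ into short subintervals $s=r_0<r_1<\dots<r_N=t$ and apply the estimate on each $[r_{i-1},r_i]$. The error term on each piece is controlled by the oscillation $F(r_i^-)-F(r_{i-1})$. Summing and refining the partition, the total squared error tends to zero: $F$ has bounded variation, so on all but finitely many subintervals the oscillation is small, and the few bad pieces have small length. This yields $F(t)-F(s)\ge\frac2n\int_s^tF^2\,dr$, which is \eqref{eq fisher differential} as an inequality of measures.

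The main obstacle is this last step. The jump part of $dF$ prevents uniform smallness of $F(r)-F(s)$ on a fixed interval. The refinement argument must use the fact that jumps of a bounded monotone function are summable, so that the bad subintervals contribute negligibly to $\int F^2\,dr$. I would handle this by first removing the finitely many jumps exceeding a threshold $\delta$, and then letting $\delta\to0$.
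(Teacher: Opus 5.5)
Your proposal is correct. Its core estimate matches the paper's: approximate $-\grad\log u(s)$ at the left endpoint, bound the defect $[|Y+\grad\log u|^2]_r$ by $F(r)-F(s)+\varepsilon$ using \eqref{eq fisher representation} and the monotonicity of $J_Y$, and feed the trace inequality and Jensen's inequality into \eqref{eq vector fisher}.

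The difference is in how the measure inequality is extracted. The paper works pointwise. It fixes a point $s$ where $F$ is differentiable, hence continuous there, and uses an interval of length $h$ with defect $h^2$. Dividing by $h$ gives $F'(s)\ge\frac2nF(s)^2$ almost everywhere. It then invokes the Lebesgue decomposition of the Stieltjes measure of a nondecreasing function, $dF\ge F'\,dt$, to conclude.

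You instead prove the integrated inequality $F(t)-F(s)\ge\frac2n\int_s^tF^2\,dr$ directly, by summing the one-interval estimate over a partition. This avoids Lebesgue's differentiation theorem and the decomposition of $dF$, at the cost of partition bookkeeping.

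That bookkeeping is easier than your final paragraph suggests. Since $F^2-\bigl((|F|-e)_+\bigr)^2\le2|F|e$ and $F(r)-F(r_{i-1})\le F(r_i)-F(r_{i-1})$ on $[r_{i-1},r_i]$, the total error is at most a constant times
$\sum_i(r_i-r_{i-1})\bigl(\sqrt{F(r_i)-F(r_{i-1})}+\sqrt{\varepsilon_i}\bigr)$.
By Cauchy--Schwarz, $\sum_i(r_i-r_{i-1})\sqrt{F(r_i)-F(r_{i-1})}\le\bigl((t-s)\,h\,(F(t)-F(s))\bigr)^{1/2}$ for a partition of mesh $h$. So no separate removal of large jumps is needed. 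Your counting argument, with at most $(F(t)-F(s))/\delta$ bad pieces, also works.

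To finish, you should say explicitly how the interval inequality gives $dF\ge\frac2nF^2\,dt$ on $(0,T)$. Use $dF((a,b))=F(b^-)-F(a^+)$, let $s\downarrow a$ and $t\uparrow b$, and then extend from open intervals to Borel sets by outer regularity.
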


Here $dF$ denotes the nonnegative Radon measure representing the
        distributional derivative of $F$, and $dt$ is Lebesgue measure.

\begin{proof}
    Fix $0\le s<T$ and choose $\zeta_k\in C_c^\infty(M)$ with
            $0\le\zeta_k\le\zeta_{k+1}\le1$ and $\zeta_k\to1$ on $M$.
    Let $X_k$ be a solution from Lemma~\ref{lem vector existence} with
            $X_k(s)=-\zeta_k\grad\log u(s)$.
    By the same calculation as in the proof of
            \eqref{eq fisher variational}, we obtain
    \begin{equation}
        \label{eq score approximation}
            [\diver X_k]_s\to I(s),\qquad
            [|X_k|^2]_s\le I(s),\qquad
            J_{X_k}(s)\to F(s).
    \end{equation}
    For $s<t\le T$, Proposition~\ref{prop fisher finite} and
            \eqref{eq vector fisher} imply
    \[
        F(t)\ge J_{X_k}(t)\ge J_{X_k}(s).
    \]
    Letting $k\to\infty$, we obtain $F(t)\ge F(s)$.

    To prove \eqref{eq fisher differential}, fix $s\in(0,T)$ where
            $F$ is differentiable.
    For $0<h<T-s$, by \eqref{eq score approximation}, choose one of
            the above solutions, denoted by $X_h$, such that
    \[
        0\le F(s)-J_{X_h}(s)\le h^2.
    \]
    For any $s\le\tau\le s+h$, by \eqref{eq fisher representation}
            and monotonicity of $F$ and $J_{X_h}$, we have
    \[
        [|X_h+\grad\log u|^2]_\tau
        =F(\tau)-J_{X_h}(\tau)
        \le F(s+h)-F(s)+h^2.
    \]
    By \eqref{eq weighted divergence} and Cauchy-Schwarz,
    \begin{align*}
        \bigl|[\mathcal R+\diver X_h]_\tau-F(\tau)\bigr|
        &=\bigl|[\langle X_h+\grad\log u,\grad\log u\rangle]_\tau\bigr|\\
        &\le\sqrt{I(\tau)[|X_h+\grad\log u|^2]_\tau}\\
        &\le\sqrt{(F(T)+K_*)\bigl(F(s+h)-F(s)+h^2\bigr)}.
    \end{align*}
    Since $F$ is continuous at $s$, we obtain
    \[
        \sup_{s\le\tau\le s+h}
        \bigl|[\mathcal R+\diver X_h]_\tau-F(s)\bigr|\to0
        \quad\text{as}\quad h\to0^+.
    \]
    By \eqref{eq vector fisher}, the trace inequality, and Cauchy-Schwarz,
            we have
    \begin{align*}
        F(s+h)-F(s)+h^2
        &\ge J_{X_h}(s+h)-J_{X_h}(s)\\
        &\ge2\int_s^{s+h}[|\Ric+\nabla X_h|^2]_\tau\,d\tau\\
        &\ge\frac2n\int_s^{s+h}[\mathcal R+\diver X_h]_\tau^2\,d\tau.
    \end{align*}
    The integral is finite by \eqref{eq ricci integrable} and
            \eqref{eq vector energy}.
    Dividing by $h$ and letting $h\to0^+$, we obtain
    \[
        F'(s)\ge\frac2nF(s)^2.
    \]
    Since $F$ is nondecreasing, the singular part of $dF$ is nonnegative.
    Therefore,
    \[
        dF\ge F'\,dt\ge\frac2nF^2\,dt
        \quad\text{on}\quad(0,T),
    \]
    which proves \eqref{eq fisher differential}.
\end{proof}

\subsection{Boltzmann-Shannon entropy}

We consider the Boltzmann-Shannon entropy $H=-\int_Mu\log u\,d\mu_{g(t)}$
        relative to the evolving Riemannian volume.
For closed Ricci flows, the identity $H'=-F$ is classical, see
        \cite[equation~(6.5.1)]{Topping06Book} and
        \cite[Theorem~2.1, equation~(2-7)]{GPT13PJM}. Since the Boltzmann-Shannon entropy $H$ is closely related to the Nash entropy, one may also compare  related results for pointed Nash entropy in \cite{HN14CPAM,Bamler20arXiv}.
 We will prove finiteness and the integral identity for $H$ under only a scalar
        curvature lower bound.
Since $F$ may be negative, $H$ need not be monotone.

We first justify integration by parts for bounded functions using
        solutions of the Dirichlet heat equation as cutoffs.
The boundary terms will be controlled by conservation of
        mass.

\begin{lemma}
    \label{lem bounded chain pairing}
    Let $0\le s<t\le T$, and let $v$ be a bounded smooth function
            on $M\times[s,t]$.
    For $s\le\tau<t$, define
    \[
        q:=\partial_\tau v+\Delta_{g(\tau)}v
            +2\langle\nabla\log u,\nabla v\rangle_{g(\tau)},
    \]
    where the spatial operators are taken with respect to $g(\tau)$.
    If $\int_s^t[|q|]_\tau\,d\tau<+\infty$, then
    \begin{equation}
        \label{eq bounded chain pairing}
        [v]_t-[v]_s=\int_s^t[q]_\tau\,d\tau.
    \end{equation}
\end{lemma}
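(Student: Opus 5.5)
The plan is to localize with a \emph{forward} Dirichlet heat solution rather than with the approximations $u_j$. The integrand $q$ involves $\nabla\log u$ and not $\nabla u_j$, and $\nabla v$ is not globally controlled, so approximating $u$ by $u_j$ would make the gradient pairing hard to pass to the limit. Fix a smooth exhaustion $\{\Omega_j\}$. Let $\psi_j$ solve $\Box\psi_j=0$ on $\Omega_j\times(s,t]$ with $\psi_j(s)=1$ and $\psi_j=0$ on $\partial\Omega_j$, extended by zero outside $\Omega_j$. To avoid the corner incompatibility at $\partial\Omega_j\times\{s\}$, take initial data $\zeta_j\in C_c^\infty(\Omega_j)$ with $0\le\zeta_j\le1$ increasing to $1$.

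By the Dirichlet maximum principle, $0\le\psi_j\le\psi_{j+1}\le1$. Moreover $\psi_j$ increases to the minimal solution with initial value $1$, namely $\int_MK(x,\tau;y,s)\,d\mu_{g(s)}(y)$, which equals $1$ by \eqref{eq heat conservation}. Hence $\psi_j\uparrow1$ on $M\times(s,t]$. Since $\psi_j\ge0$ and $\psi_j$ vanishes on the boundary, also $\partial_V\psi_j\le0$ on $\partial\Omega_j$.

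Next, compute $\frac{d}{d\tau}\int_{\Omega_j}\psi_jvu\,d\mu_{g(\tau)}$ for $s<\tau<t$, using $\partial_\tau u=-\Delta u+\mathcal Ru$, $\partial_\tau d\mu=-\mathcal R\,d\mu$ and $\partial_\tau\psi_j=\Delta\psi_j$. This gives
\[
\int_{\Omega_j}\bigl(-\psi_jv\Delta u+uv\Delta\psi_j+u\psi_j\partial_\tau v\bigr)\,d\mu.
\]
I integrate the first term by parts; there is no boundary term because $\psi_j=0$ on $\partial\Omega_j$. The second term I integrate by parts keeping the boundary term $\int_{\partial\Omega_j}uv\,\partial_V\psi_j\,d\sigma$. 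Separately, expanding $\int\psi_juq$ and integrating $\int\psi_ju\Delta v$ by parts (again with no boundary term) gives
\[
\int\psi_juq=\int\bigl(\psi_ju\partial_\tau v+\psi_j\langle\nabla u,\nabla v\rangle-u\langle\nabla\psi_j,\nabla v\rangle\bigr).
\]
Comparing the two expressions, the cross terms $v\langle\nabla\psi_j,\nabla u\rangle$ cancel, and we obtain
\[
\frac{d}{d\tau}\int_{\Omega_j}\psi_jvu\,d\mu_{g(\tau)}
=\int_{\Omega_j}\psi_jqu\,d\mu_{g(\tau)}
+\int_{\partial\Omega_j}uv\,\partial_V\psi_j\,d\sigma_{g(\tau)}.
\]
Everything here is smooth on $\overline\Omega_j\times[s,t]$; at $t=T$ this uses the local smoothness of $u$ up to $T$. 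Taking $v\equiv1$ (so $q=0$) gives $\frac{d}{d\tau}\int\psi_ju=\int_{\partial\Omega_j}u\,\partial_V\psi_j$.

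The main obstacle is the boundary term, and I handle it by conservation of mass. Since $u\ge0$ and $-\partial_V\psi_j\ge0$, integrating the $v\equiv1$ identity in time gives
\[
\Bigl|\int_s^t\!\!\int_{\partial\Omega_j}uv\,\partial_V\psi_j\Bigr|
\le\|v\|_\infty\Bigl(\int\zeta_ju(s)-\int\psi_ju(t)\Bigr).
\]
By monotone convergence, $\int\zeta_ju(s)\to1$ and $\int\psi_ju(t)\to1$ by \eqref{eq conjugate equation}, so this bound tends to $0$.

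Finally, integrate the identity over $[s,t]$ and let $j\to\infty$. The terms $\int\psi_jvu$ at the two endpoints converge to $[v]_t$ and $[v]_s$ by dominated convergence, since $|\psi_jvu|\le\|v\|_\infty u$. The term $\int_s^t[\psi_jq]_\tau\,d\tau$ converges to $\int_s^t[q]_\tau\,d\tau$ by dominated convergence on $M\times[s,t]$ with dominating function $|q|u$, which is integrable by hypothesis. This yields \eqref{eq bounded chain pairing}.
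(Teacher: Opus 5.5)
Your proposal is correct and follows essentially the same route as the paper. Forward Dirichlet heat solutions with compactly supported initial data $\zeta_j\uparrow1$ serve as cutoffs, and Green's formula gives $\frac{d}{d\tau}\int_{\Omega_j}\psi_j uv\,d\mu = \int_{\Omega_j}\psi_j uq\,d\mu + \int_{\partial\Omega_j} uv\,\partial_V\psi_j\,d\sigma$. The boundary flux is then controlled by the $v\equiv1$ case together with conservation of mass, and dominated convergence finishes the argument; the only cosmetic difference is that the paper packages the computation as $\Box^*(uv)=-uq$ before applying Green's formula, whereas you integrate by parts term by term.
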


\begin{proof}
    By the definition of $q$ and the identity $\Box^*u=0$, we have
    \[
        \Box^*(uv)=-u(\partial_\tau v+\Delta v)
            -2\langle\nabla u,\nabla v\rangle=-uq.
    \]
    When $t=T$, the identity extends locally smoothly to $T$.

    Choose a smooth exhaustion $\{\Omega_j\}$ of $M$ and
            $\eta_j\in C_c^\infty(\Omega_j)$ such that
            $0\le\eta_j\le\eta_{j+1}\le1$ and $\eta_j\to1$ on $M$.
    Let $h_j$ solve the heat equation on $\Omega_j\times[s,t]$
            with initial value $\eta_j$ and zero boundary values,
            and extend $h_j$ by zero outside $\Omega_j$.
    If $M$ is compact, take $\Omega_j=M$ and $\eta_j=h_j\equiv1$.
    By parabolic maximum principle, $0\le h_j\le1$.
    It follows from Dirichlet kernel exhaustion and
            \eqref{eq heat conservation} that $h_j\to1$ pointwise
            on $M\times[s,t]$.

    Let $V$ denote the outward unit normal to $\partial\Omega_j$ and
            $d\sigma$ its induced boundary measure.
    Since $h_j\ge0$ in $\Omega_j$ and $h_j=0$ on $\partial\Omega_j$, $\partial_Vh_j\le0$.
    Using $\partial_\tau h_j=\Delta h_j$,
            $\partial_\tau d\mu=-\mathcal R\,d\mu$ and
            $\Box^*(uv)=-uq$, we obtain
    \begin{equation}
        \label{eq chain local green}
        \begin{aligned}
            \frac{d}{d\tau}\int_{\Omega_j}h_juv\,d\mu
            &=\int_{\Omega_j}\bigl(uv\Delta h_j+h_j\partial_\tau(uv)
                    -\mathcal Rh_juv\bigr)\,d\mu\\
            &=\int_{\Omega_j}\bigl(uv\Delta h_j-h_j\Delta(uv)
                    +h_juq\bigr)\,d\mu\\
            &=\int_{\Omega_j}h_juq\,d\mu
                +\int_{\partial\Omega_j}uv\partial_Vh_j\,d\sigma.
        \end{aligned}
    \end{equation}
    Taking $v=1$ and integrating over $(s,t)$, we obtain
    \begin{align*}
        0\le{}&\int_s^t\int_{\partial\Omega_j}
                u(-\partial_Vh_j)\,d\sigma_{g(\tau)}\,d\tau\\
        ={}&\int_{\Omega_j}\eta_ju(s)\,d\mu_{g(s)}
                -\int_{\Omega_j}h_j(t)u(t)\,d\mu_{g(t)}\to0.
    \end{align*}
    Here both integrals tend to $1$ by dominated convergence
            and \eqref{eq conjugate equation}.
    For general $v$, it follows that
    \[
        \begin{aligned}
            &\left|\int_s^t\int_{\partial\Omega_j}
                uv\partial_Vh_j\,d\sigma_{g(\tau)}\,d\tau\right|\\
            &\qquad\le\sup_{M\times[s,t]}|v|
                \int_s^t\int_{\partial\Omega_j}
                    u(-\partial_Vh_j)\,d\sigma_{g(\tau)}\,d\tau\to0.
        \end{aligned}
    \]

    Finally, since $0\le h_j\le1$, we have
    \[
        |h_juv|\le\sup_{M\times[s,t]}|v|\cdot u
        \quad\text{and}\quad
        |h_juq|\le u|q|.
    \]
    Integrating \eqref{eq chain local green} over $(s,t)$ and letting
            $j\to\infty$, we obtain \eqref{eq bounded chain pairing}
            by dominated convergence.
    This completes the proof.
\end{proof}

\begin{proposition}
    \label{prop entropy dissipation}
    Assume that $\mathcal R_{g(t)}\ge-K_*$ on $M\times[0,T]$ for
            some $K_*\ge0$.
    Then $u(t)\log u(t)\in L^1(M,d\mu_{g(t)})$ for every $t\in[0,T]$.
    Moreover, the function
    \[
        H(t):=-\int_Mu(t)\log u(t)\,d\mu_{g(t)}
    \]
    is absolutely continuous on $[0,T]$ and satisfies
    \begin{equation}
        \label{eq entropy dissipation}
        H(t)-H(s)=-\int_s^tF(\tau)\,d\tau
        \quad\text{for all}\quad0\le s<t\le T.
    \end{equation}
\end{proposition}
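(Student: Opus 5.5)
The plan is to apply Lemma~\ref{lem bounded chain pairing} to bounded regularizations of $\log(U/u)$, where $U$ is the upper bound from \eqref{eq conjugate upper}; note $\log(U/u)\ge0$. For $\varepsilon>0$ I will take a smooth, bounded, nondecreasing, concave function $\psi_\varepsilon$ on $[0,\infty)$ with $\psi_\varepsilon(r)\uparrow r$ and $0\le\psi_\varepsilon'\le1$ as $\varepsilon\to0$, for instance $\psi_\varepsilon(r)=r/(1+\varepsilon r)$. I will then set $v_\varepsilon:=\psi_\varepsilon(\log(U/u))$, which is bounded and smooth on $M\times[s,t]$ for $t<T$.

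\textbf{The operator $q_\varepsilon$.} With $\ell:=\log(U/u)$, the conjugate heat equation gives
\[
\partial_\tau\ell+\Delta\ell+2\langle\nabla\log u,\nabla\ell\rangle=\mathcal R+|\nabla\log u|^2 .
\]
Indeed, $-\partial_\tau\log u=\Delta\log u+|\nabla\log u|^2-\mathcal R$, so the left side equals $\mathcal R-|\nabla\log u|^2+2|\nabla\log u|^2$. The chain rule then gives
\[
q_\varepsilon=\psi_\varepsilon'(\ell)\bigl(\mathcal R+|\nabla\log u|^2\bigr)+\psi_\varepsilon''(\ell)|\nabla\log u|^2 .
\]
Since $|\psi_\varepsilon'|\le1$ and $|\psi_\varepsilon''|\le2\varepsilon/(1+\varepsilon r)^3\le2\varepsilon$, we get $|q_\varepsilon|\le(1+2\varepsilon)|\nabla\log u|^2+|\mathcal R|$. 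Its integral $\int_s^t[|q_\varepsilon|]_\tau\,d\tau$ is therefore finite by Proposition~\ref{prop fisher finite}, which gives $I\le F(T)+K_*$, and by Proposition~\ref{prop scalar comparison}, which gives $[|\mathcal R|]_\tau\le[\mathcal R]_T+2K_*$. Lemma~\ref{lem bounded chain pairing} then yields
\[
[v_\varepsilon]_t-[v_\varepsilon]_s=\int_s^t[q_\varepsilon]_\tau\,d\tau .
\]

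\textbf{Passing $\varepsilon\to0$.} On the right, dominated convergence in space-time with dominating function $u(3|\nabla\log u|^2+|\mathcal R|)$ gives $\int_s^t F$. The $\psi''$ term is bounded by $2\varepsilon\int I\to0$, and $\psi'_\varepsilon\to1$ pointwise. On the left, $v_\varepsilon\uparrow\ell$ by monotone convergence, so $[v_\varepsilon]_\tau\to[\ell]_\tau=\log U+H(\tau)\in[0,\infty]$. This is the main obstacle: a priori both $[\ell]_t$ and $[\ell]_s$ could be infinite, and the identity only controls their difference.

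To handle this I will anchor at a time where finiteness is known. At $t=T$ I take the limit $\tau\to T^-$. For fixed $\varepsilon$, $v_\varepsilon$ is bounded, so Lemma~\ref{lem bounded chain pairing} applies on $[s,T]$, using the local smooth extension of $u$ to $T$ noted in its proof. The difficulty is that $\ell=+\infty$ where $\phi=0$, whereas $\psi_\varepsilon(\ell)u\to\psi_\varepsilon(\infty)\cdot0=0$ there, since $\psi_\varepsilon$ is bounded. To keep $v_\varepsilon$ smooth up to $T$, I would instead first apply the argument on $[s,t]$ with $t<T$. The identity with $\varepsilon\to0$ then shows $[\ell]_s<\infty$ if and only if $[\ell]_t<\infty$, with the difference $\int_s^tF$. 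Finiteness is then obtained from the $\varepsilon$-level identity as $t\to T$: $[v_\varepsilon]_t\to\int\psi_\varepsilon(\log(U/\phi^2))\phi^2$, which is bounded uniformly in $\varepsilon$ by $\int\phi^2\log(U/\phi^2)<\infty$ (since $r\log(1/r)$ is bounded). This limit follows from local smooth convergence on $\operatorname{supp}\phi$ together with mass conservation and the boundedness of $\psi_\varepsilon$ off compact sets; I expect this tail control to need the most care, via $[\,1_{M\setminus K}]_t\to0$. Hence $[v_\varepsilon]_s\le C+\int_s^T|F|$ uniformly in $\varepsilon$, so $[\ell]_s<\infty$ for all $s$.

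\textbf{Conclusion.} With $[\ell]_s<\infty$, we have $u\log u\in L^1$, since $|u\log u|\le u\ell+u|\log U|$. The identity \eqref{eq entropy dissipation} follows on $[s,t]$ for $t<T$ after subtracting $\log U$. The case $t=T$ follows by letting $t\to T^-$, using the same uniform $\varepsilon$-bounds and monotone convergence, or equivalently the lower semicontinuity argument above combined with the bound $|H(t)-H(s)|\le\int|F|$. Since $F$ is bounded by Proposition~\ref{prop fisher finite}, $H$ is Lipschitz and in particular absolutely continuous.
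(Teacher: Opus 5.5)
Your strategy is essentially the paper's. You feed bounded regularizations of $\ell=\log(U/u)$ into Lemma~\ref{lem bounded chain pairing}, bound $|q_\varepsilon|$ by a multiple of $|\nabla\log u|^2+|\mathcal R|$, pass $\varepsilon\to0$ by monotone and dominated convergence, and anchor finiteness at the terminal time.

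\textbf{Sign error.} Your key computation has the wrong sign. You have $\partial_\tau\ell=-\partial_\tau\log u=\Delta\log u+|\nabla\log u|^2-\mathcal R$, $\Delta\ell=-\Delta\log u$ and $2\langle\nabla\log u,\nabla\ell\rangle=-2|\nabla\log u|^2$. Adding these gives
\[
\partial_\tau\ell+\Delta\ell+2\langle\nabla\log u,\nabla\ell\rangle=-\mathcal R-|\nabla\log u|^2,
\]
not $+\mathcal R+|\nabla\log u|^2$; what you computed is the operator applied to $\log u$. Hence
\[
q_\varepsilon=-\psi_\varepsilon'(\ell)\bigl(\mathcal R+|\nabla\log u|^2\bigr)+\psi_\varepsilon''(\ell)|\nabla\log u|^2 .
\]
As written, your argument yields $H(t)-H(s)=+\int_s^tF\,d\tau$, the opposite of \eqref{eq entropy dissipation} and of the closed-manifold identity $H'=-F$. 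With the corrected sign, the limit gives $[\ell]_t-[\ell]_s=-\int_s^tF\,d\tau$, which is the claim. The bound $|q_\varepsilon|\le(1+2\varepsilon)|\nabla\log u|^2+|\mathcal R|$, and hence your finiteness argument, are unaffected.

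\textbf{The case $t=T$.} Of your two routes, only the first closes it: let $\varepsilon\to0$ in $[v_\varepsilon]_s=A_\varepsilon-\int_s^T[q_\varepsilon]_\tau\,d\tau$, where $A_\varepsilon\uparrow[\ell]_T$. Lower semicontinuity together with the Lipschitz bound gives only $H(T)\le\lim_{t\to T^-}H(t)$. The reverse inequality still needs the $\varepsilon$-level upper bound anchored at $T$.

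\textbf{Choice of regularization.} This is the only genuine difference from the paper. The paper uses $v_\varepsilon=\log\frac{U+\varepsilon}{u+\varepsilon}$, which is bounded and also increases to $\ell$. It is smooth up to $t=T$, because $u+\varepsilon\ge\varepsilon$ even on $\{\phi=0\}$. So the lemma applies directly on $[s,T]$ with $[v_\varepsilon]_T=\int_M\phi^2\log\frac{U+\varepsilon}{\phi^2+\varepsilon}\,d\mu_{g(T)}$, and finiteness and the endpoint case follow at once. Your saturation $\psi_\varepsilon(\ell)$ is not smooth at $T$ on $\{\phi=0\}$. That forces the extra limit $t\to T^-$ with tail control from mass conservation, $\int_{M\setminus K}u(t)\,d\mu_{g(t)}\to0$. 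You handle this correctly, so it costs extra work but nothing more.
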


\begin{proof}
    Let $U>0$ be the upper bound in \eqref{eq conjugate upper}.
    For any $\varepsilon>0$, set
    \[
        v_\varepsilon:=\log\frac{U+\varepsilon}{u+\varepsilon}.
    \]
    The function $v_\varepsilon$ is locally smooth up to $T$, nonnegative
            and bounded.
    Wherever $u>0$, it holds that $v_\varepsilon\to\log(U/u)$ as
            $\varepsilon\to0^+$.

    For $\tau<T$, by the conjugate heat equation \eqref{eq conjugate equation},
    \[
        \partial_\tau v_\varepsilon
            =\frac{\Delta u-\mathcal Ru}{u+\varepsilon},
        \qquad
        \Delta v_\varepsilon
            =-\frac{\Delta u}{u+\varepsilon}
                +\frac{|\nabla u|^2}{(u+\varepsilon)^2}.
    \]
    Then
    \[
        \begin{aligned}
            q_\varepsilon:={}&\partial_\tau v_\varepsilon+\Delta v_\varepsilon
                +2\langle\nabla\log u,\nabla v_\varepsilon\rangle\\
            &\qquad=-\frac{u}{u+\varepsilon}\mathcal R
                +\frac{|\nabla u|^2}{(u+\varepsilon)^2}
                -\frac{2|\nabla u|^2}{u(u+\varepsilon)}\\
            &\qquad=-\frac{u(u+2\varepsilon)}{(u+\varepsilon)^2}
                    |\nabla\log u|^2
                -\frac{u}{u+\varepsilon}\mathcal R.
        \end{aligned}
    \]
    Noting that
    \[
        0\le\frac{u}{u+\varepsilon}\le1
        \quad\text{and}\quad
        0\le\frac{u(u+2\varepsilon)}{(u+\varepsilon)^2}
            =1-\frac{\varepsilon^2}{(u+\varepsilon)^2}\le1,
    \]
    we have $|q_\varepsilon|\le|\nabla\log u|^2+|\mathcal R|$.
    By Propositions~\ref{prop fisher finite} and \ref{prop scalar comparison},
    \[
        [|\nabla\log u|^2]_\tau=I(\tau)\le F(T)+K_*,
        \qquad
        [|\mathcal R|]_\tau\le[\mathcal R]_T+2K_*.
    \]
    In particular, $\int_0^T\bigl(I(\tau)+[|\mathcal R|]_\tau\bigr)\,d\tau
            <+\infty$.
    Lemma~\ref{lem bounded chain pairing} therefore applies to
            $v_\varepsilon$ and hence, for $0\le s<t\le T$, we have
    \begin{equation}
        \label{eq truncated entropy dissipation}
        [v_\varepsilon]_s=[v_\varepsilon]_t
            +\int_s^t\left[
                \frac{u(u+2\varepsilon)}{(u+\varepsilon)^2}
                    |\nabla\log u|^2
                +\frac{u}{u+\varepsilon}\mathcal R
                \right]_\tau\,d\tau.
    \end{equation}

    We now set $t=T$ and let $\varepsilon\to0^+$.
    By monotone convergence,
    \[
        [v_\varepsilon]_T
        =\int_M\phi^2\log\frac{U+\varepsilon}{\phi^2+\varepsilon}
                \,d\mu_{g(T)}
        \to
            \int_M\phi^2\log\frac{U}{\phi^2}\,d\mu_{g(T)}<+\infty.
    \]
    By dominated convergence,
    \[
        \int_s^T\left[
            \frac{u(u+2\varepsilon)}{(u+\varepsilon)^2}
                |\nabla\log u|^2
            +\frac{u}{u+\varepsilon}\mathcal R
            \right]_\tau\,d\tau
        \to\int_s^TF(\tau)\,d\tau.
    \]
    Consequently, by monotone convergence,
            \eqref{eq truncated entropy dissipation} becomes
    \[
        \int_Mu(s)\log\frac{U}{u(s)}\,d\mu_{g(s)}
        =\int_M\phi^2\log\frac{U}{\phi^2}\,d\mu_{g(T)}
            +\int_s^TF(\tau)\,d\tau<+\infty.
    \]
    This holds for every $s\in[0,T]$.
    Then
    \[
        \int_Mu(s)|\log u(s)|\,d\mu_{g(s)}
        \le\int_Mu(s)\log\frac{U}{u(s)}\,d\mu_{g(s)}
            +|\log U|<+\infty.
    \]
    Thus $H(s)$ is well defined, and the above identity becomes
    \[
        H(s)=H(T)+\int_s^TF(\tau)\,d\tau,
    \]
    and \eqref{eq entropy dissipation} follows.
    Since $F$ is bounded by Proposition~\ref{prop fisher finite},
            $H$ is absolutely continuous on $[0,T]$.
\end{proof}

\subsection{\texorpdfstring{$\mathcal W$}{W}-entropy and entropy monotonicity}

Following Wang \cite{Wang18CambJ,Wang20arXiv}, for an open set
        $\Omega\subset M$, let
\[
    \mathscr S(\Omega,g):=\left\{\varphi\in C_c^\infty(\Omega)\ \middle|\
            \varphi\ge0,\ \int_\Omega\varphi^2\,d\mu_g=1\right\}.
\]
For $\varphi\in\mathscr S(\Omega,g)$ and $\tau>0$, set
\begin{equation}
    \label{eq:W-definition}
    \begin{split}
        \mathcal W(\Omega,g,\varphi,\tau)
                &:=\int_\Omega\left(4\tau|\nabla\varphi|_g^2
                +\tau\mathcal R_g\varphi^2
                -\varphi^2\log\varphi^2\right)d\mu_g\\
        &\qquad-n-\frac n2\log(4\pi\tau),
    \end{split}
\end{equation}
and define
\[
    \begin{aligned}
        \mu(\Omega,g,\tau)&:=\inf\limits_{\varphi\in\mathscr S(\Omega,g)}
                \mathcal W(\Omega,g,\varphi,\tau),\\
        \nu(\Omega,g,\tau)&:=\inf\limits_{0<s\le\tau}\mu(\Omega,g,s).
    \end{aligned}
\]
We omit $\Omega$ when $\Omega=M$.
Note that Wang defines $\mu$ using normalized nonnegative
        $W^{1,2}_0(\Omega)$ functions, whereas we use
        $C_c^\infty(\Omega)$ functions.
For $\Omega\subset\subset M$, the two definitions of $\mu$ and $\nu$
        agree, see also the approximation argument in the proof of
        Lemma~\ref{lem:entropy-rigidity}.
For a probability density $u$ satisfying
\[
    \sqrt u\in W^{1,2}(M,g)
    \quad\text{and}\quad
    \int_M\bigl(|\mathcal R_g|u+u|\log u|\bigr)\,d\mu_g<+\infty,
\]
we use the same integral formula to define
        $\mathcal W(g,\sqrt u,\tau)$, without requiring compact support.
For a smooth positive density $u$, let $f:=-\log u-\frac n2\log(4\pi\tau)$.
Then $u=(4\pi\tau)^{-n/2}e^{-f}$, and the above definition agrees
        with Perelman's original functional \cite[Section~3]{Perelman02arXiv}.

If $\mathcal R_{g(t)}\ge-K_*$ on $M\times[0,T]$ for some $K_*\ge0$,
        then, by Propositions~\ref{prop scalar comparison},
        \ref{prop fisher finite}, and \ref{prop entropy dissipation},
        \eqref{eq:W-definition} is finite for the density
        $u(t)$ defined by \eqref{eq conjugate definition}, and we have
\begin{equation}
    \label{eq entropy functional}
    \mathcal W(g(t),\sqrt{u(t)},\tau)=\tau F(t)+H(t)-\frac n2\log(4\pi\tau)-n.
\end{equation}
At the terminal time, the energy is interpreted as in
        \eqref{eq terminal fisher}.

For a conjugate heat kernel $u=K(x_0,T;\cdot,t)$, the quantity
        $H(t)-\frac n2\log(4\pi(T-t))-\frac n2$ is the pointed
        Nash entropy based at $(x_0,T)$, see \cite[Section~1]{HN14CPAM} and
        \cite[Definition~5.1]{Bamler20arXiv}.

On a closed Ricci flow, Perelman's monotonicity formula implies
        that $t\mapsto\mathcal W(g(t),\sqrt{u(t)},\tau+T-t)$ is
        nondecreasing for every $\tau>0$ and every normalized
        positive solution $u$ of the conjugate heat equation, see
        \cite[Section~3, equation~(3.4)]{Perelman02arXiv} and
        \cite[equation~(17.12)]{CGIKLN10Book}.
For noncompact results under global geometric bounds, see
        \cite[Theorem~7.1]{CTY11CJM}, \cite{Zhang12JFA}, and
        \cite[Section~3.3]{LNN23GT}.
The following theorem establishes the corresponding entropy monotonicity
        for complete Ricci flows under an initial scalar curvature lower bound.
Its proof uses the $F$-energy monotonicity and $H$-entropy identity established in this section.

\begin{theorem}
    \label{thm entropy monotonicity}
    Let $(M,g(t))$, $t\in[0,T]$, be a complete Ricci flow
            satisfying $\mathcal R_{g(0)}\ge-K_*$ for some $K_*\ge0$.
    Let $\phi\in\mathscr S(M,g(T))$, and let $u$ be the minimal
            nonnegative solution of the conjugate heat equation
            with $u(T)=\phi^2$, given by \eqref{eq conjugate definition}.
    Then, for every $\tau>0$,
    \begin{equation}
        \label{eq entropy monotonicity}
        \mathcal W(g(T),\phi,\tau)
        \ge\mathcal W(g(0),\sqrt{u(0)},\tau+T).
    \end{equation}
\end{theorem}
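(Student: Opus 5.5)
First, Chen's bound (Proposition~\ref{prop scalar lower bound}) gives $\mathcal R_{g(t)}\ge-K_*$ on $M\times[0,T]$, so every result of Section~\ref{sec:entropy} applies. For $\tau>0$ set $\theta(t):=\tau+T-t$ and
\[
    \mathcal W(t):=\theta(t)F(t)+H(t)-\frac n2\log(4\pi\theta(t))-n,
    \qquad t\in[0,T].
\]
By \eqref{eq entropy functional}, $\mathcal W(t)=\mathcal W(g(t),\sqrt{u(t)},\theta(t))$ for $t<T$. At $t=T$ we use \eqref{eq terminal fisher} and $H(T)=-\int_M\phi^2\log\phi^2\,d\mu_{g(T)}$. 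Together these show that $\mathcal W(T)$ is exactly $\mathcal W(g(T),\phi,\tau)$ as defined in \eqref{eq:W-definition}. It is also clear that $\mathcal W(0)=\mathcal W(g(0),\sqrt{u(0)},\tau+T)$. The claim is therefore $\mathcal W(T)\ge\mathcal W(0)$.

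The plan is to show that $\mathcal W$ is of bounded variation on $[0,T]$ with nonnegative distributional derivative, and to check that no mass is lost at the endpoints. The ingredients are as follows.

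\begin{itemize}
    \item By Proposition~\ref{prop fisher finite} and Proposition~\ref{prop fisher differential}, $F$ is bounded and nondecreasing, and $dF\ge\frac2nF^2\,dt$ as measures on $(0,T)$.
    \item By Proposition~\ref{prop entropy dissipation}, $H$ is Lipschitz with $dH=-F\,dt$.
    \item $\theta$ is smooth with $\theta'=-1$.
\end{itemize}

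The product rule for BV functions (one factor smooth) gives, on $(0,T)$,
\[
    d\mathcal W=\theta\,dF-F\,dt-F\,dt+\frac{n}{2\theta}\,dt
    \ge\Bigl(\frac{2\theta}{n}F^2-2F+\frac{n}{2\theta}\Bigr)dt
    =\frac{2\theta}{n}\Bigl(F-\frac{n}{2\theta}\Bigr)^2dt\ge0.
\]
Here we used $\theta>0$ and discarded the nonnegative singular part of $\theta\,dF$.

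Concretely, fix $0<a<b<T$. Write $\theta(b)F(b)-\theta(a)F(a)=\int_{(a,b]}\theta\,dF-\int_a^bF\,dt$, which holds for right-continuous representatives; at atoms one uses one-sided values. Combining this with the identity for $H$ and the log term gives $\mathcal W(b)\ge\mathcal W(a)$ for all $0<a<b<T$ at which $F$ is continuous, hence for all but countably many points.

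To reach the endpoints, we pass to limits. As $b\to T^-$, $F(b)\to F(T)$ by \eqref{eq terminal energy continuity}, and $H$ is continuous, so $\mathcal W(b)\to\mathcal W(T)$. Near $t=0$ the direction is favorable. Since $F$ is nondecreasing, $F(a)\ge F(0)$. Since $H$ and $\log\theta$ are continuous, this gives
\[
    \liminf_{a\to0^+}\mathcal W(a)\ge\theta(0)F(0)+H(0)-\frac n2\log(4\pi\theta(0))-n=\mathcal W(0),
\]
because $\theta(a)\to\theta(0)>0$ and $F(a)\ge F(0)$, with $F$ bounded. Chaining these gives $\mathcal W(T)\ge\mathcal W(0)$, which is \eqref{eq entropy monotonicity}.

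The step needing most care is the BV calculus. $F$ may jump, so I would avoid the chain rule and argue with Lebesgue–Stieltjes integration. Since $F$ is monotone, $\int_{(a,b]}\theta\,dF\ge\int_a^b\theta F'\,dt$, and Proposition~\ref{prop fisher differential} gives $F'\ge\frac2nF^2$ a.e. Completing the square then needs only integrability of $F$ and $F^2$, which holds because $F$ is bounded. Jumps of $F$ only increase $\theta F$, since $\theta>0$, so they help rather than hurt.
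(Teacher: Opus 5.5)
Your proposal is correct and follows the paper's proof essentially step for step. You use the same decomposition $\mathcal W(t)=\theta F+H-\frac n2\log(4\pi\theta)-n$, the same measure inequality $d\mathcal W\ge\frac{2\theta}{n}\bigl(F-\frac{n}{2\theta}\bigr)^2\,dt$ obtained from $dF\ge\frac2nF^2\,dt$ and $dH=-F\,dt$, and the same endpoint chain $\mathcal W(0)\le\mathcal W(0^+)\le\mathcal W(T^-)\le\mathcal W(T)$, justified by monotonicity of $F$ at $0$ and by \eqref{eq terminal energy continuity} at $T$. Your Lebesgue--Stieltjes bookkeeping (working at continuity points of $F$ and noting that jumps of $\theta F$ are nonnegative) only makes explicit what the paper leaves implicit.
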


\begin{proof}
    By Chen's estimate \eqref{eq:chen-scalar}, we have
            $\mathcal R\ge-K_*$ on $M\times[0,T]$.
    Fix $\tau>0$ and set
    \begin{align*}
        \mathcal W(t):={}&\mathcal W(g(t),\sqrt{u(t)},\tau+T-t)\\
        ={}&(\tau+T-t)F(t)+H(t)-\frac n2\log\bigl(4\pi(\tau+T-t)\bigr)-n,
    \end{align*}
    where the second equality follows from \eqref{eq entropy functional}.
    By Propositions~\ref{prop fisher finite},
            \ref{prop fisher differential}, and
            \ref{prop entropy dissipation}, the function $F$ is bounded
            and nondecreasing on $[0,T]$, while $H$ is absolutely
            continuous and satisfies $dH=-F\,dt$.
    Hence, as measures on $(0,T)$,
    \begin{align*}
        d\mathcal W&=(\tau+T-t)\,dF+\left(-2F+\frac{n}{2(\tau+T-t)}\right)dt\\
        &\ge\left(\frac{2(\tau+T-t)}nF^2-2F+\frac{n}{2(\tau+T-t)}\right)dt\\
        &=\frac{2(\tau+T-t)}n\left(F-\frac{n}{2(\tau+T-t)}\right)^2dt\ge0,
    \end{align*}
    where the second line uses \eqref{eq fisher differential}.
    The one-sided limits of $\mathcal W$ exist because $F$ is bounded
            and monotone and $H$ is continuous.
    Since $H$ is continuous and $\tau+T-t>0$, we conclude that
    \[
        \mathcal W(0)\le\mathcal W(0^+)\le\mathcal W(T^-)\le\mathcal W(T),
    \]
    which proves \eqref{eq entropy monotonicity}.
\end{proof}

\begin{corollary}
    \label{cor initial entropy propagation}
    Let $(M,g(t))$, $t\in[0,T]$, be a complete Ricci flow
            satisfying $\mathcal R_{g(0)}\ge-K_*$ for some $K_*\ge0$.
    Then, for every $t\in[0,T]$ and $\tau>0$,
    \begin{equation}
        \label{eq:transport}
        \mu(g(t),\tau)\ge\mu(g(0),t+\tau).
    \end{equation}
\end{corollary}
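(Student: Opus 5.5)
The plan is to apply Theorem~\ref{thm entropy monotonicity} on the restricted flow and then take the infimum over terminal data. For $t=0$ the statement is trivial. Fix $t\in(0,T]$ and consider the restricted flow $g(r)$, $r\in[0,t]$. It is still a complete Ricci flow with $\mathcal R_{g(0)}\ge-K_*$, so all results of Sections~\ref{sec:preliminaries}--\ref{sec:entropy} apply with $T$ replaced by $t$.

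For any $\phi\in\mathscr S(M,g(t))$, let $u$ be the minimal conjugate solution on $[0,t]$ with $u(t)=\phi^2$. Theorem~\ref{thm entropy monotonicity} gives
\[
    \mathcal W(g(t),\phi,\tau)\ge\mathcal W(g(0),\sqrt{u(0)},t+\tau).
\]
Taking the infimum over $\phi$ will yield \eqref{eq:transport}, provided we show
\[
    \mathcal W(g(0),\sqrt{u(0)},t+\tau)\ge\mu(g(0),t+\tau).
\]
The difficulty is that $\mu$ is defined as an infimum over $C_c^\infty$ functions, while $\sqrt{u(0)}$ is not compactly supported.

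To handle this, we approximate $\sqrt{u(0)}$ by normalized compactly supported functions and check convergence of each term of $\mathcal W$. By Propositions~\ref{prop scalar comparison}, \ref{prop fisher finite} and \ref{prop entropy dissipation}, at time $0$ we have
\[
    \int_M|\nabla\sqrt{u}|^2\,d\mu<\infty,\qquad \int_M|\mathcal R|u\,d\mu<\infty,\qquad \int_M u|\log u|\,d\mu<\infty.
\]
Let $w=\sqrt{u(0)}$, which is smooth and positive. Take Lipschitz cutoffs $\varphi_L$ with $0\le\varphi_L\le1$, $\varphi_L=1$ on $B_{g(0)}(p,L)$, support in $B(p,2L)$, and $|\nabla\varphi_L|\le 1/L$. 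Set
\[
    w_L=\frac{\varphi_Lw}{a_L},\qquad a_L^2=\int_M\varphi_L^2u\,d\mu\to1.
\]

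We then check convergence term by term.
\begin{itemize}
    \item \emph{Gradient term.} Since $\int|\nabla(\varphi_Lw)|^2\le(1+\delta)\int\varphi_L^2|\nabla w|^2+(1+\delta^{-1})L^{-2}$, and the reverse bound holds similarly, dominated convergence gives convergence of this term.
    \item \emph{Scalar term.} The term $\int\mathcal R\varphi_L^2u$ converges by dominated convergence, since $|\mathcal R|u\in L^1$.
    \item \emph{Entropy term.} We have
    \[
        \varphi_L^2u\log(\varphi_L^2u)=\varphi_L^2u\log u+u\,\varphi_L^2\log\varphi_L^2 .
    \]
    The first piece converges by dominated convergence. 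The second is bounded by $e^{-1}u$ in absolute value, is supported where $\varphi_L<1$, and hence tends to $0$. The normalization by $a_L$ only contributes the terms $\log a_L^2\to0$.
\end{itemize}
Thus $\mathcal W(g(0),w_L,t+\tau)\to\mathcal W(g(0),w,t+\tau)$.

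Finally, $w_L$ is only Lipschitz with compact support. We mollify in a fixed compact set, which preserves nonnegativity; $W^{1,2}$ convergence then handles the gradient term, and uniform convergence on a compact set handles the bounded terms $\mathcal R\varphi^2$ and $\varphi^2\log\varphi^2$. Renormalizing gives elements of $\mathscr S(M,g(0))$ whose $\mathcal W$ values approximate $\mathcal W(g(0),w_L,t+\tau)$. Hence
\[
    \mu(g(0),t+\tau)\le\mathcal W(g(0),\sqrt{u(0)},t+\tau),
\]
which completes the argument. The main obstacle is the entropy term in the cutoff step, specifically the control of $\varphi_L^2\log\varphi_L^2$. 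It is handled by the pointwise bound $|x\log x|\le e^{-1}$ on $[0,1]$ together with the integrability of $u(0)|\log u(0)|$ established in Proposition~\ref{prop entropy dissipation}.
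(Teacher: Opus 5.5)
Your proposal is correct and follows the paper's proof. You apply Theorem~\ref{thm entropy monotonicity} on $[0,t]$, approximate $\sqrt{u(0)}$ by normalized cutoffs, and pass to the limit in each term of $\mathcal W$ using the integrability from Propositions~\ref{prop scalar comparison}, \ref{prop fisher finite} and \ref{prop entropy dissipation} and the bound $|x\log x|\le e^{-1}$. The only difference is cosmetic: the paper uses smooth cutoffs $\zeta_j$ with $\|\nabla\zeta_j\|_{L^\infty}\to0$, so $\zeta_j\sqrt{u(0)}$ is already in $C_c^\infty$ (as $u(0)$ is smooth and positive for $t>0$), which makes your mollification step unnecessary.
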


\begin{proof}
    The case $t=0$ is immediate.
    Fix $t\in(0,T]$, $\tau>0$, and $\varphi_t\in\mathscr S(M,g(t))$.
    By Chen's estimate \eqref{eq:chen-scalar},
            $\mathcal R_{g(s)}\ge-K_*$ for all $s\in[0,t]$.
    Let $u$ be given by \eqref{eq conjugate definition}, with $T=t$
            and terminal value $u(t)=\varphi_t^2$.
    By Theorem~\ref{thm entropy monotonicity},
    \[
        \mathcal W(g(t),\varphi_t,\tau)
        \ge\mathcal W(g(0),\sqrt{u(0)},t+\tau).
    \]
    Write $\varphi_0:=\sqrt{u(0)}$.
    Since $\varphi_t$ is arbitrary, by the definition of
            $\mu$, it suffices to show that
    \[
        \mathcal W(g(0),\varphi_0,t+\tau)\ge\mu(g(0),t+\tau).
    \]
    Since $\varphi_0$ need not have compact support, we must justify the
            inequality.

    In the remainder of the proof, all spatial derivatives, norms,
            and volume measures are taken with respect to $g(0)$.
    By \eqref{eq conjugate equation} and
            Propositions~\ref{prop scalar comparison},
            \ref{prop fisher finite}, and \ref{prop entropy dissipation},
            we have $\int_M\varphi_0^2\,d\mu=1$ and
    \begin{equation}
        \label{eq:W-integrability}
        \int_M\left(|\nabla\varphi_0|^2+|\mathcal R_{g(0)}|\varphi_0^2
                +\varphi_0^2|\log \varphi_0^2|\right)d\mu<+\infty.
    \end{equation}

    Choose $\zeta_j\in C_c^\infty(M)$ such that
    \[
        0\le\zeta_j\le1,\quad \zeta_j\to1\text{ pointwise},
        \quad\text{and}\quad
        \|\nabla\zeta_j\|_{L^\infty}\to0.
    \]
    For all sufficiently large $j$, set
    \[
        q_j:=\int_M\zeta_j^2\varphi_0^2\,d\mu\to1,
        \qquad
        \phi_j:=q_j^{-1/2}\zeta_j\varphi_0.
    \]
    Since $t>0$, the function $\varphi_0$ is positive and smooth.
    Hence $\phi_j\in\mathscr S(M,g(0))$.
    By \eqref{eq:W-definition},
    \[
        \begin{aligned}
            \mathcal W(g(0),\phi_j,t+\tau)
            ={}&(t+\tau)\int_M\left(4|\nabla\phi_j|^2
                    +\mathcal R_{g(0)}\phi_j^2\right)d\mu\\
            &-\int_M\phi_j^2\log\phi_j^2\,d\mu
                    -n-\frac n2\log\bigl(4\pi(t+\tau)\bigr).
        \end{aligned}
    \]
    By \eqref{eq:W-integrability} and the properties of $\zeta_j$, we have
    \[
        \nabla(\zeta_j\varphi_0)-\nabla\varphi_0
        =(\zeta_j-1)\nabla\varphi_0+\varphi_0\nabla\zeta_j\to0
        \quad\text{in }L^2(M,d\mu).
    \]
    Since $q_j\to1$, the gradient term converges.
    The scalar curvature term converges by dominated convergence.
    For the entropy term, we have
    \begin{align*}
        \int_M\phi_j^2\log\phi_j^2\,d\mu
        &=q_j^{-1}\int_M\zeta_j^2\varphi_0^2\log\varphi_0^2\,d\mu\\
        &\qquad+q_j^{-1}\int_M\varphi_0^2\zeta_j^2\log\zeta_j^2\,d\mu
            -\log q_j.
    \end{align*}
    Since $\varphi_0^2|\log\varphi_0^2|\in L^1(M,d\mu)$ by
            \eqref{eq:W-integrability} and
            $|\varphi_0^2\zeta_j^2\log\zeta_j^2|\le\varphi_0^2/e$,
            dominated convergence implies that the first integral tends to
            $\int_M\varphi_0^2\log\varphi_0^2\,d\mu$ and the second to zero.
    Together with $q_j\to1$, we conclude that
    \[
        \begin{aligned}
            \mathcal W(g(0),\varphi_0,t+\tau)
            &=\lim\limits_{j\to\infty}
                \mathcal W(g(0),\phi_j,t+\tau)\\
            &\ge\mu(g(0),t+\tau),
        \end{aligned}
    \]
    which completes the proof.
\end{proof}

\section{Uniform pseudolocality}
\label{sec:curvature}

\subsection{Small-scale entropy of the initial metric}

By the sharp Euclidean logarithmic Sobolev inequality
        \cite[Theorem~22.15]{CGIKLN10Book}, we have
        $\mathcal W(g_{\mathrm E},\varphi,\tau)\ge0$ for every
        $\varphi\in\mathscr S(\mathbb R^n,g_{\mathrm E})$ and $\tau>0$.
By taking normalized compact cutoffs of the Gaussian
        $(4\pi\tau)^{-n/4}e^{-|x|^2/(8\tau)}$ as test functions, we obtain
\[
    \mu(g_{\mathrm E},\tau)=0\quad\text{for every}\quad\tau>0.
\]

Let $(M^n,g)$ be a complete Riemannian manifold and
        $\Omega\subset\subset M$ be a smooth domain.
Define
\[
    \mathbf I(\Omega,g):=\inf\limits_{D\subset\subset\Omega}
            \frac{\Area_g(\partial D)}
            {\Vol_g(D)^{(n-1)/n}}
    \quad\text{and}\quad
    \mathbf I_n:=n\omega_n^{1/n},
\]
where $\omega_n$ is the volume of the Euclidean unit ball.
If $\mathbf I(\Omega,g)>0$ and $\mathcal R_g\ge-\Lambda$ on $\Omega$
        for some $\Lambda\ge0$, then by Wang's isoperimetric comparison
        \cite[Lemma~3.5]{Wang18CambJ} and the Euclidean logarithmic
        Sobolev inequality, we have
\begin{equation}
    \label{eq Wang local comparison}
    \mu(\Omega,g,\tau)
    \ge n\log\frac{\mathbf I(\Omega,g)}{\mathbf I_n}-\Lambda\tau
    \quad\text{for every}\quad\tau>0.
\end{equation}
See also \cite[Lemma~2.9]{Wang20arXiv}.

For a nonnegative $\varphi\in C_c^\infty(M)$, put
        $q:=\int_M\varphi^2\,d\mu_g$ and define
\begin{align*}
    \mathcal W_0(g,\varphi,\tau)
    &:=\int_M\left(4\tau|\nabla\varphi|_g^2
            +\tau\mathcal R_g\varphi^2
            -\varphi^2\log\varphi^2\right)d\mu_g\\
    &\qquad+q\log q-\left(n+\frac n2\log(4\pi\tau)\right)q.
\end{align*}
For $q>0$, this equals $q\mathcal W(g,q^{-1/2}\varphi,\tau)$;
        for $q=0$, it equals zero.
It agrees with $\mathcal W$ when $q=1$.
In this form, local entropy bounds apply directly to localized test
        functions of arbitrary, including zero, $L^2$ norm.

\begin{lemma}
    \label{lem uniform small entropy}
    Let $(M^n,g)$, $n\ge2$, be a complete Riemannian manifold
            satisfying $\Ric_g\ge-K$ and $\inj_g\ge i_0$
            for some $K\ge0$ and $i_0>0$.
    For every $\varepsilon\in(0,1)$, there is a constant
            $C(n,\varepsilon,Ki_0^2)>0$ such that, for every $\tau>0$,
    \begin{equation}
        \label{eq uniform entropy bound}
        \mu(g,\tau)
        \ge-n(n-1)\log(1+\varepsilon)-\frac{\tau}{i_0^2}\left(nKi_0^2+4C\right).
    \end{equation}
    In particular,
    \[
        \liminf\limits_{\delta\to0^+}\nu(g,\delta)\ge0.
    \]
\end{lemma}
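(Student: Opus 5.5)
We follow the strategy sketched in the introduction: harmonic coordinates give local almost Euclidean isoperimetry, \eqref{eq Wang local comparison} gives local entropy bounds, and a quadratic partition of unity globalizes them.

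\emph{Step 1: local charts.} Fix $\varepsilon\in(0,1)$. By the harmonic radius estimate of Anderson and Cheeger \cite{AC92JDG}, applied after rescaling $g$ by $i_0^{-2}$ so that $\Ric\ge-Ki_0^2$ and $\inj\ge1$, there is $r_0=r_0(n,\varepsilon,Ki_0^2)\in(0,1)$ with the following property. Every ball $B_g(x,r_0i_0)$ carries harmonic coordinates in which
\[
(1+\varepsilon)^{-2}g_{\mathrm E}\le g\le(1+\varepsilon)^2g_{\mathrm E}.
\]
Now compare a domain $D$ in the ball with the same domain measured by $g_{\mathrm E}$. Volumes are distorted by at most $(1+\varepsilon)^{n}$ and hypersurface areas by at most $(1+\varepsilon)^{n-1}$, in the appropriate directions. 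The Euclidean isoperimetric inequality then gives
\[
\mathbf I\bigl(B_g(x,r_0i_0),g\bigr)\ge(1+\varepsilon)^{-(n-1)-(n-1)}\mathbf I_n .
\]
With the cleaner bookkeeping of area factor $(1+\varepsilon)^{-(n-1)}$ and volume factor $(1+\varepsilon)^{n}$ raised to the power $(n-1)/n$, the ratio is at least $(1+\varepsilon)^{-2(n-1)}\mathbf I_n$. I would arrange constants, for instance by using $(1+\varepsilon)^{\pm1}$ on the metric so that $\sqrt{\det}$ and the area factors combine, to get the exponent $(n-1)$. This matches the $n(n-1)\log(1+\varepsilon)$ in \eqref{eq uniform entropy bound}. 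Since $\Ric\ge-K$ implies $\mathcal R\ge-nK$, \eqref{eq Wang local comparison} with $\Lambda=nK$ yields, for every nonnegative $\psi\in C_c^\infty(B_g(x,r_0i_0))$,
\[
\mathcal W_0(g,\psi,\tau)\ge\Bigl(-n(n-1)\log(1+\varepsilon)-nK\tau\Bigr)\int_M\psi^2\,d\mu_g .
\]
This is the homogeneous form, which is valid also when $\int_M\psi^2\,d\mu_g=0$.

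\emph{Step 2: partition of unity.} Choose a maximal $r_0i_0/2$-separated set $\{x_j\}$. Bishop--Gromov at scale $\le i_0$ bounds the overlap multiplicity by $N(n,Ki_0^2)$. Take bump functions $\chi_j$ supported in $B(x_j,r_0i_0)$, equal to $1$ on $B(x_j,r_0i_0/2)$, with $|\nabla\chi_j|\le C/(r_0i_0)$. Set $\eta_j=\chi_j/(\sum_k\chi_k^2)^{1/2}$, so that $\sum_j\eta_j^2=1$ and
\[
\sum_j|\nabla\eta_j|^2\le C(n,\varepsilon,Ki_0^2)\,i_0^{-2}.
\]
For $\varphi\in\mathscr S(M,g)$ put $\psi_j=\eta_j\varphi$. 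The IMS-type identity
\[
\sum_j|\nabla(\eta_j\varphi)|^2=|\nabla\varphi|^2+\varphi^2\sum_j|\nabla\eta_j|^2
\]
holds because $\sum_j\eta_j\nabla\eta_j=0$. It gives
\[
\sum_j 4\tau\int|\nabla\psi_j|^2\le4\tau\int|\nabla\varphi|^2+\frac{4C\tau}{i_0^2}.
\]
The entropy terms need a separate accounting. Write $q_j=\int\psi_j^2$, so $\sum_jq_j=1$. Then
\[
\sum_j\Bigl(-\int\psi_j^2\log\psi_j^2+q_j\log q_j\Bigr)=-\int\varphi^2\log\varphi^2-\int\varphi^2\sum_j\eta_j^2\log\eta_j^2+\sum_j q_j\log q_j .
\]
The term $-\sum_j\eta_j^2\log\eta_j^2$ is nonnegative but at most $\log N$. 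The term $\sum_jq_j\log q_j$ is nonpositive. These are the dangerous terms.

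\emph{Main obstacle.} The naive sum gives
\[
\mathcal W(g,\varphi,\tau)\ge\sum_j\mathcal W_0(g,\psi_j,\tau)-\frac{4C\tau}{i_0^2}+\sum_jq_j\log q_j-\int\varphi^2\sum_j\eta_j^2\log\eta_j^2 .
\]
The last term is $\ge0$ and helps. The term $\sum_jq_j\log q_j\ge-\log(\#\{j\})$, however, is unbounded below. This is exactly why the $\log$ is convex in the wrong direction. The fix I would try is to exploit that $\mathcal W_0$ is not merely homogeneous: $\mu(\Omega,g,\tau)$ bounds apply to $q_j^{-1/2}\psi_j$. The honest global inequality then requires $\sum q_j\log q_j$ to be absorbed. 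The standard remedy, as in Wang's and Lee--Naber--Neumayer's gluing, is to use the $\log$-Sobolev form with a free parameter. Equivalently, if some uniform entropy lower bound fails, take $\varphi$ nearly minimizing. Since $\mathcal W$ is Euclidean-like at scales $\tau\ll i_0^2$, its mass concentrates.

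The cleaner route I would actually attempt is to apply the local bound not to $\psi_j$ with the original $\tau$, but to use the mass-weighted form
\[
\int\psi^2\log\psi^2-q\log q\le 4\tau\int|\nabla\psi|^2+\Bigl(n(n-1)\log(1+\varepsilon)+nK\tau-\tfrac n2\log(4\pi\tau)-n\Bigr)q .
\]
Summing gives exactly $\int\varphi^2\log\varphi^2-\sum q_j\log q_j$ on the left. The extra $-\sum q_j\log q_j\ge0$ then appears on the side where it helps in $\mathcal W$, and the sign works out. Checking this sign carefully is the crux. Once it holds, summing yields \eqref{eq uniform entropy bound}. Letting $\tau\to0$ and then $\varepsilon\to0$ gives $\liminf_{\delta\to0^+}\nu(g,\delta)\ge0$.
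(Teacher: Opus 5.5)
Your Steps 1 and 2 match the paper: harmonic charts, Wang's comparison applied to the homogeneous functional $\mathcal W_0$, a maximal separated net with $\eta_j=\widetilde\eta_j/(\sum_k\widetilde\eta_k^2)^{1/2}$, and the IMS identity. The argument does not close at the entropy term, however, and that is the only delicate point of the gluing.

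First, your ``naive sum'' has both signs reversed. From your own Step~2 identity,
\begin{align*}
\sum_j\mathcal W_0(g,\psi_j,\tau)
&=\mathcal W(g,\varphi,\tau)+4\tau\int_M\varphi^2\sum_j|\nabla\eta_j|^2\,d\mu_g\\
&\qquad+\sum_jq_j\log q_j-\int_M\varphi^2\sum_j\eta_j^2\log\eta_j^2\,d\mu_g .
\end{align*}
So in the lower bound for $\mathcal W(g,\varphi,\tau)$, the term $-\sum_jq_j\log q_j\ge0$ helps, and the term $\int_M\varphi^2\sum_j\eta_j^2\log\eta_j^2\,d\mu_g\le0$ hurts. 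Your ``main obstacle'' is an artifact of this sign error. Second, in your ``cleaner route'' the claim that summing gives exactly $\int_M\varphi^2\log\varphi^2-\sum_jq_j\log q_j$ is false. The sum is $\int_M\varphi^2\log\varphi^2+\int_M\varphi^2\sum_j\eta_j^2\log\eta_j^2\,d\mu_g-\sum_jq_j\log q_j$, so you have dropped precisely the unfavorable term. The assertion that ``the sign works out'' is therefore unproved, and it is the step you yourself flag as the crux.

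The missing idea is Jensen's inequality. Since $\varphi^2\,d\mu_g$ is a probability measure and $z\mapsto z\log z$ is convex, for each $j$
\[
q_j\log q_j=\Bigl(\int_M\eta_j^2\varphi^2\,d\mu_g\Bigr)\log\Bigl(\int_M\eta_j^2\varphi^2\,d\mu_g\Bigr)\le\int_M\eta_j^2\log\eta_j^2\,\varphi^2\,d\mu_g .
\]
Summing over $j$ gives $\sum_jq_j\log q_j\le\int_M\varphi^2\sum_j\eta_j^2\log\eta_j^2\,d\mu_g$. Hence the combined correction is nonnegative and $\mathcal W(g,\varphi,\tau)\ge\sum_j\mathcal W_0(g,\psi_j,\tau)-4C\tau/i_0^2$. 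The chart bound together with $\sum_jq_j=1$ then yields \eqref{eq uniform entropy bound}; this is exactly the paper's argument.

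Without Jensen, the separate bounds $-\sum_jq_j\log q_j\ge0$ and $\sum_j\eta_j^2\log\eta_j^2\ge-\log N$, with $N$ the overlap multiplicity, leave an error $-\log N$. That error does not vanish as $\tau\to0^+$ and $\varepsilon\to0^+$, so it would destroy $\liminf_{\delta\to0^+}\nu(g,\delta)\ge0$. The remedies you float, a free parameter or concentration of near-minimizers, are not needed.

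A minor point: with $(1+\varepsilon)^{-1}g_{\mathrm E}\le g\le(1+\varepsilon)g_{\mathrm E}$, the area and volume factors $(1+\varepsilon)^{-(n-1)/2}$ and $(1+\varepsilon)^{n/2}$ give $\mathbf I\ge(1+\varepsilon)^{-(n-1)}\mathbf I_n$ directly. This is what produces the term $n(n-1)\log(1+\varepsilon)$.
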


\begin{proof}
    We first obtain uniform entropy bounds on coordinate domains and
            then transfer them to $M$ using cutoffs with
            $\sum\limits_j\eta_j^2=1$.
    Fix $\varepsilon\in(0,1)$.
    By the local $W^{1,2n}$ harmonic-radius estimate of Anderson-Cheeger
            \cite[Theorem~0.3]{AC92JDG}, there exists
            $\rho\in(0,i_0/4)$, with $\rho/i_0$ depending only on
            $n,\varepsilon,Ki_0^2$, such that every $B_g(p,4\rho)$
            admits harmonic coordinates satisfying
    \begin{equation}
        \label{eq initial coordinate comparison}
        (1+\varepsilon)^{-1}g_{\mathrm E}
        \le g\le(1+\varepsilon)g_{\mathrm E}.
    \end{equation}
    Here $g_{\mathrm E}$ denotes the Euclidean metric in these coordinates.

    Choose a maximal $\rho$-separated set $\{p_j\}$.
    Then the balls $B_g(p_j,\rho)$ cover $M$, while
            $B_g(p_j,\rho/2)$ are pairwise disjoint.
    Put $\Omega_j:=B_g(p_j,3\rho)$.
    Fix $x\in M$.
    If $x\in B_g(p_j,2\rho)$, then
    \[
        B_g(p_j,\rho/2)\subset B_g(x,5\rho/2)
        \subset B_g(p_j,9\rho/2).
    \]
    By Bishop-Gromov volume comparison and $\rho<i_0/4$, we have
    \[
        \VolB_g(p_j,\rho/2)
        \ge c\VolB_g(p_j,9\rho/2)
        \ge c\VolB_g(x,5\rho/2),
    \]
    for some $c(n,Ki_0^2)>0$.
    Since the balls $B_g(p_j,\rho/2)$ are disjoint,
            every point of $M$ belongs to at most $N:=\lfloor c^{-1}\rfloor$
            of the balls $B_g(p_j,2\rho)$.

    Choose smooth distance cutoffs $0\le\widetilde\eta_j\le1$ with
    \[
        \widetilde\eta_j=1\text{ on }B_g(p_j,\rho),
        \quad\supp\widetilde\eta_j\subset\subset B_g(p_j,2\rho)
        \quad\text{and}\quad
        |\nabla\widetilde\eta_j|\le c_1/\rho,
    \]
    where $c_1>0$ is a universal constant.
    These cutoffs are smooth because $2\rho<i_0$ and they are
            constant near their centers.
    The family $\{\widetilde\eta_j\}$ is locally finite.
    Set
    \[
        S:=\sum_j\widetilde\eta_j^2\ge1
        \quad\text{and}\quad
        \eta_j:=S^{-1/2}\widetilde\eta_j.
    \]
    Direct differentiation gives
    \[
        \sum_j|\nabla\eta_j|^2
        =\frac{\sum_j|\nabla\widetilde\eta_j|^2}{S}
                -\frac{|\nabla S|^2}{4S^2}
        \le\frac{Nc_1^2}{\rho^2}.
    \]
    Thus, $\eta_j\in C_c^\infty(\Omega_j)$ and
    \[
        \sum_j\eta_j^2=1
        \quad\text{and}\quad
        \sum_j|\nabla\eta_j|^2\le\frac{C}{i_0^2},
    \]
    for some $C(n,\varepsilon,Ki_0^2)>0$.

    By \eqref{eq initial coordinate comparison}, for every smooth
            $D\subset\subset\Omega_j$, we have
    \begin{align*}
        \Area_g(\partial D)
        &\ge(1+\varepsilon)^{-\frac{n-1}{2}}
                \Area_{\mathrm E}(\partial D),\\
        \Vol_g(D)
        &\le(1+\varepsilon)^\frac{n}{2}\Vol_{\mathrm E}(D),
    \end{align*}
    where $\Area_{\mathrm E}$ and $\Vol_{\mathrm E}$ are computed
            in the coordinate image.
    Hence
    \[
        \mathbf I(\Omega_j,g)
        \ge(1+\varepsilon)^{-(n-1)}\mathbf I_n.
    \]
    Since $\mathcal R_g\ge-nK$, by Wang's estimate
            \eqref{eq Wang local comparison}, we have
    \[
        \mu(\Omega_j,g,\tau)
        \ge-n(n-1)\log(1+\varepsilon)-nK\tau.
    \]
    For a nonzero nonnegative $\psi\in C_c^\infty(\Omega_j)$, set
            $q:=\int_M\psi^2\,d\mu_g>0$.
    Then $q^{-1/2}\psi\in\mathscr S(\Omega_j,g)$ and hence
    \[
        \mathcal W_0(g,\psi,\tau)
        =q\mathcal W(g,q^{-1/2}\psi,\tau)
        \ge q\mu(\Omega_j,g,\tau).
    \]
    Consequently,
    \begin{equation}
        \label{eq chart entropy comparison}
        \mathcal W_0(g,\psi,\tau)
        \ge\left[-n(n-1)\log(1+\varepsilon)
                -nK\tau\right]
                \int_M\psi^2\,d\mu_g
    \end{equation}
    for every nonnegative $\psi\in C_c^\infty(\Omega_j)$ and $\tau>0$.
    The case $\psi=0$ is immediate.

    Fix $\varphi\in\mathscr S(M,g)$ and set $\varphi_j:=\eta_j\varphi$ and
            $q_j:=\int_M\varphi_j^2\,d\mu_g$.
    Only finitely many $\varphi_j$ are nonzero, and $\sum\limits_jq_j=1$.
    We differentiate $\sum\limits_j\eta_j^2=1$ to obtain
            $\sum\limits_j\eta_j\nabla\eta_j=0$, and hence
    \[
        \sum_j\int_M|\nabla\varphi_j|^2\,d\mu_g
        =\int_M|\nabla\varphi|^2\,d\mu_g
                +\int_M\varphi^2\sum_j|\nabla\eta_j|^2\,d\mu_g.
    \]
    Moreover,
    \[
        \sum_j\varphi_j^2\log\varphi_j^2
        =\varphi^2\log\varphi^2
                +\varphi^2\sum_j\eta_j^2\log\eta_j^2.
    \]
    Since $\varphi^2d\mu_g$ is a probability measure and $z\log z$
            is convex, by Jensen's inequality, we obtain
    \[
        \sum_jq_j\log q_j
        \le\int_M\varphi^2\sum_j\eta_j^2\log\eta_j^2\,d\mu_g.
    \]
    Therefore,
    \begin{align*}
        \sum_j\mathcal W_0(g,\varphi_j,\tau)
        &=\mathcal W(g,\varphi,\tau)
                +4\tau\int_M\varphi^2\sum_j|\nabla\eta_j|^2\,d\mu_g\\
        &\qquad+\sum_jq_j\log q_j
                -\int_M\varphi^2\sum_j\eta_j^2\log\eta_j^2\,d\mu_g\\
        &\le\mathcal W(g,\varphi,\tau)+\frac{4C\tau}{i_0^2}.
    \end{align*}
    We sum \eqref{eq chart entropy comparison} and use $\sum\limits_jq_j=1$
            to obtain
    \[
        \mathcal W(g,\varphi,\tau)
        \ge-n(n-1)\log(1+\varepsilon)
                -nK\tau-\frac{4C\tau}{i_0^2}.
    \]
    Since $\varphi\in\mathscr S(M,g)$ is arbitrary,
            \eqref{eq uniform entropy bound} follows.

    The bound holds uniformly for $0<\tau\le\delta$.
    We take the infimum over these $\tau$, let $\delta\to0^+$,
            and then let $\varepsilon\to0^+$ to obtain
            $\liminf\limits_{\delta\to0^+}\nu(g,\delta)\ge0$.
    This completes the proof.
\end{proof}

By Lemma~\ref{lem uniform small entropy} and
        Corollary~\ref{cor initial entropy propagation}, we obtain a lower
        bound for $\mu(g(t),\tau)$, whose error tends to zero as $t+\tau\to0^+$,
        uniformly under \eqref{eq:initial-geometry}.
These entropy estimates will be applied to curvature estimates in
        Section~\ref{Curvature estimates from entropy}.

\subsection{Entropy rigidity}

For the reader's convenience, we present the following special case of
        Cheng's local entropy rigidity theorem
        \cite[Theorem~1.5(ii)]{Cheng24arXiv}, corresponding to $Q=0$.
No completeness assumption is required.

\begin{lemma}[Cheng]
    \label{lem:entropy-rigidity}
    Let $(M^n,g)$ be a Riemannian manifold.
    If $\mu(g,\tau)\ge0$ for every $\tau>0$, then $g$ is flat.
\end{lemma}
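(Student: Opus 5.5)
The plan is to argue locally and by contradiction, comparing $\mu(g,\tau)$ at very small $\tau$ with the Euclidean value after a blow-up at a point. Suppose $\Rm_g(p)\neq0$ at some $p\in M$. Work in normal coordinates at $p$ on a small ball $B_g(p,r_0)\subset\subset M$, so no completeness is needed. Put $\tau=\varepsilon^2$ and rescale by $\hat g:=\varepsilon^{-2}g$. Scale invariance gives $\mathcal W(g,\varphi,\varepsilon^2\tau')=\mathcal W(\hat g,\hat\varphi,\tau')$ with $\hat\varphi:=\varepsilon^{n/2}\varphi$. So it suffices to build, for small $\varepsilon$, a compactly supported normalized test function $\hat\varphi$ with $\mathcal W(\hat g,\hat\varphi,1)<0$. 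This contradicts $\mu(g,\varepsilon^2)\ge0$. By definition $\mu$ is an infimum over $C_c^\infty$ functions, so a compactly supported competitor suffices.

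The test function is a cutoff Gaussian corrected at order $\varepsilon^2$. In normal coordinates $\hat g_{ij}(x)=\delta_{ij}-\tfrac{\varepsilon^2}{3}R_{ikjl}(p)x^kx^l+O(\varepsilon^3|x|^3)$, and $\mathcal R_{\hat g}=\varepsilon^2\mathcal R_g(p)+O(\varepsilon^3|x|)$. Take $\hat\varphi^2=(4\pi)^{-n/2}e^{-|x|^2/4}(1+\varepsilon^2 h(x))$, normalized with respect to $d\mu_{\hat g}$ and cut off at radius $\varepsilon^{-1/2}$. Here $h$ is a quadratic polynomial built from $R_{ij}(p)$. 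Gaussian decay makes the cutoff error $O(e^{-c/\varepsilon})$. The $O(\varepsilon^2)$ coefficient of $\mathcal W$ is then an explicit quadratic expression in $\Rm(p)$ and $h$.

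The expected answer for this coefficient is as follows. With $h\equiv0$ one gets the known expansion of the entropy of a heat-kernel-like density, a multiple of $\mathcal R(p)$, which has no fixed sign. So $h$ has to be optimized, equivalently one takes the second variation of $\mathcal W$ about the Gaussian soliton. The expected optimum is a negative multiple of $|\Ric(p)|^2$, consistent with Perelman's formula $2\tau|\Ric+\nabla^2f-g/(2\tau)|^2$. This yields $\mathcal W<0$ unless $\Ric(p)=0$. Concluding that $\Ric\equiv0$ is not enough, since Ricci-flat non-flat metrics exist. To reach $\Rm(p)=0$ I would expand to order $\varepsilon^4$ once $\Ric\equiv0$. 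Alternatively, combine $\Ric\equiv0$ with a local volume argument: Wang's comparison (\eqref{eq Wang local comparison} in reverse) together with the log-Sobolev constant forces the local isoperimetric profile to be Euclidean at small scales. For Ricci-flat metrics that would force $\theta(p)=1$, giving volume-ratio rigidity $\mathrm{Vol}\,B(p,r)=\omega_nr^n+o(r^{n+4})$. The $r^{n+4}$ coefficient of this expansion is $-c|\Rm(p)|^2$ when $\Ric=0$.

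The main obstacle is this last step, extracting $|\Rm(p)|^2$ and not just $\Ric(p)$. My first attempt will be the direct fourth-order expansion of $\mathcal W$ for the cutoff Gaussian in the Ricci-flat case. There I would exploit that the $\varepsilon^4$ term of $\log\det\hat g$ contains $-\tfrac1{180}R_{ikjl}R_{imjn}x^kx^lx^mx^n$. After Gaussian integration this gives a strictly negative multiple of $|\Rm(p)|^2$ in $\int\hat\varphi^2\log\hat\varphi^2$ (through the volume form), while the gradient and scalar terms do not cancel it. I would keep in mind, as a fallback, that the paper attributes the result to Cheng \cite{Cheng24arXiv}, whose argument may proceed through such an expansion.
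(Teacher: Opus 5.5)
\emph{Gap: there is no order-$\varepsilon^2$ term.} The paper does not compute anything here. It notes $\mu(\Omega,g,\tau)\ge\mu(g,\tau)\ge0$ on relatively compact domains, checks that this persists for Cheng's $W^{1,2}_0$ test functions, and invokes Cheng's Theorem~1.5(ii) with $Q=0$. A direct small-scale expansion is a legitimate alternative, but your bookkeeping of orders is wrong at the decisive step. The $O(\varepsilon^2)$ coefficient of $\mathcal W(\hat g,\hat\varphi,1)$ vanishes identically, for every $h$.
\begin{itemize}
\item For $h\equiv0$, use the Gauss lemma and $\sqrt{\det\hat g}=1-\frac{\varepsilon^2}{6}R_{kl}x^kx^l+\dots$. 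The scalar term contributes $\varepsilon^2\mathcal R(p)$, the gradient term $-\frac{\varepsilon^2}{3}\mathcal R(p)$, and the entropy term $-\frac{2\varepsilon^2}{3}\mathcal R(p)$. These cancel, and odd orders vanish by parity.
\item A density correction of size $\varepsilon^2$ enters only at order $\varepsilon^4$, because the Gaussian minimizes $\mathcal W(g_{\mathrm E},\cdot,1)$. Equivalently, the Gaussian soliton is critical in both metric and density.
\end{itemize}
So no negative multiple of $|\Ric(p)|^2$ appears at order $\varepsilon^2$. You never obtain $\Ric\equiv0$, and your Ricci-flat fourth-order computation has nothing to stand on. The volume fallback does not rescue this. \eqref{eq Wang local comparison} bounds $\mu$ from below by the isoperimetric constant, not conversely. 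Nothing you cite yields $\Vol B(p,r)=\omega_nr^n+o(r^{n+4})$.

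\emph{Repair: expand to order $\varepsilon^4$ for a general metric from the start.} Use Gray's expansion of the volume density, with quartic coefficients $-\frac1{40}\nabla^2\Ric$, $\frac1{72}\Ric\otimes\Ric$, $-\frac1{180}\Rm*\Rm$. For $h\equiv0$ one finds
\[
\mathcal W(\hat g,\hat\varphi,1)=\varepsilon^4\bigl(\tfrac49|\Ric|^2-\tfrac16|\Rm|^2\bigr)(p)+O(\varepsilon^6),
\]
since the $\Delta\mathcal R$ and $\mathcal R^2$ contributions cancel. This can be positive: it equals $\frac29K^2\varepsilon^4$ on a surface of curvature $K$. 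So a correction is genuinely needed, but at this order, not at order $\varepsilon^2$.

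Since $\Delta_{\hat g}\frac{|x|^2}{2}=n-\frac{\varepsilon^2}{3}R_{ij}(p)x^ix^j+\dots$, the only part of $h$ that couples linearly to the curvature at order $\varepsilon^4$ is its component along $R_{ij}(p)x^ix^j$. Every other component of $h$ costs a nonnegative amount, by the Gaussian Poincar\'e inequality. The choice $h=\frac13R_{ij}(p)x^ix^j$, plus the normalizing constant, lowers the coefficient by exactly $\frac49|\Ric(p)|^2$. This gives
\[
\mathcal W(\hat g,\hat\varphi,1)=-\tfrac{\varepsilon^4}{6}|\Rm(p)|^2+O(\varepsilon^6).
\]
That contradicts $\mu(g,\varepsilon^2)\ge0$ for small $\varepsilon$ unless $\Rm(p)=0$, in one step and with no Ricci-flat reduction. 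Your ansatz already contains this $h$. With the corrected bookkeeping, your framework would give a self-contained replacement for the citation of Cheng's theorem, needing the hypothesis only along a sequence $\tau\to0$.
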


\begin{proof}
    For every smooth domain $\Omega\subset\subset M$, we have
    \[
        \mu(\Omega,g,\tau)\ge\mu(g,\tau)\ge0
        \quad\text{for every}\quad\tau>0.
    \]
    This lower bound also holds for Cheng's definition of local
            entropy, which uses normalized positive
            $W_0^{1,2}(\Omega)$ test functions.
    Indeed, under smooth approximation, the gradient and
            scalar curvature integrals converge, since $\Omega$ is relatively
            compact.
    For the entropy term, note that for any $\varepsilon>0$, there exists
            $C_\varepsilon>0$ such that
    \[
        \left|\frac{d}{ds}(s^2\log s^2)\right|
        \le C_\varepsilon(1+s^{1+\varepsilon}).
    \]
    For $\varepsilon>0$ sufficiently small, we have $L^{2+\varepsilon}$
            convergence by Sobolev embedding, and the convergence of the entropy
            integrals follows from the mean value theorem and H\"older's
            inequality.
    By applying Cheng's theorem with $Q=0$, we obtain $g$
            is flat on $\Omega$.
    Since $\Omega$ is arbitrary, $g$ is flat on $M$.
\end{proof}

\subsection{Pseudolocality from entropy}
\label{Curvature estimates from entropy}

We first state the curvature estimate under a uniform initial
        lower bound for the $\mu$-functional.
Related estimates under bounded curvature hypotheses appear in
        \cite[Theorem~1.2]{Wang20arXiv} and
        \cite[Theorem~3.2]{LNN23GT}, here no curvature bound is assumed
        along the flow.

\begin{theorem}
    \label{thm:curvature}
    Given $n\ge2$, $\alpha>0$, $K_*\ge0$, $s_*>0$ and a function
            $\omega:(0,s_*]\to[0,+\infty)$ satisfying
            $\lim\limits_{s\to0^+}\omega(s)=0$, there is a constant
            $\delta(n,\alpha,K_*,s_*,\omega)>0$ with the following property.
    Let $(M^n,g_0)$ be a complete Riemannian manifold satisfying
    \begin{equation}
        \label{eq:entropy-initial}
        \mathcal R_{g_0}\ge-K_*,\qquad
        \mu(g_0,s)\ge-\omega(s)
        \quad\text{for every}\quad s\in(0,s_*].
    \end{equation}
    Then any complete Ricci flow $g(t)$ on $[0,T]$ with $g(0)=g_0$ satisfies
    \[
        |\Rm|_{g(t)}\le\frac{\alpha}{t}
        \quad\text{on}\quad M\times(0,\min\{\delta,T\}].
    \]
\end{theorem}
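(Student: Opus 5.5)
We argue by contradiction, along the lines of the strategy described in the introduction. Suppose the statement fails for some fixed $n,\alpha,K_*,s_*,\omega$. Then there are complete manifolds $(M_k,g_{0,k})$ satisfying \eqref{eq:entropy-initial}, complete Ricci flows $g_k(t)$ on $[0,T_k]$, and points $(x_k,t_k)$ with $t_k\to0^+$, $t_k\le T_k$ and $t_k|\Rm|_{g_k(t_k)}(x_k)>\alpha$.

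The first step is to propagate the entropy bound. By Proposition~\ref{prop scalar lower bound}, $\mathcal R_{g_k(t)}\ge-K_*$ along each flow. Corollary~\ref{cor initial entropy propagation} then gives
\[
    \mu(g_k(t),\tau)\ge\mu(g_{0,k},t+\tau)\ge-\omega(t+\tau)
    \quad\text{whenever}\quad t+\tau\le s_*.
\]
This bound is scale-invariant in the sense that, for a rescaled flow $\lambda g(\lambda^{-1}t)$, the entropy at scale $\tau$ equals the original entropy at scale $\lambda^{-1}\tau$.

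The second step is point picking, which is the main obstacle because no curvature bound on the flow is available a priori. We would follow the spatially weighted point-picking of \cite[Section~4]{Wang20arXiv}. Among space-time points $(y,t)$ with $t\le t_k$ and $y$ in a large $g_k(t)$-ball around $x_k$, we maximize a quantity of the form $t|\Rm|(y,t)$, weighted by a factor decreasing in the distance to $x_k$. If the maximum is not near-optimal, we jump to a point of larger weighted curvature. Each jump is confined to a controlled ball, so the process terminates after finitely many steps.

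Distances change over time, since the flow is not a priori bounded. To keep the picked balls nested across times, we use the shrinking balls lemma \cite[Corollary~3.3]{ST22JDG}. This gives curvature bounds of the form $|\Rm|\le C/t$ on the relevant parabolic regions, and these suffice to compare distances at different times. The outcome is a point $(\bar x_k,\bar t_k)$ with $Q_k:=|\Rm|(\bar x_k,\bar t_k)$ satisfying $\bar t_kQ_k\ge\alpha$. Moreover, $|\Rm|\le4Q_k$ holds on a parabolic neighborhood $B_{g_k(\bar t_k)}(\bar x_k,A_kQ_k^{-1/2})\times[\bar t_k-\tfrac{\alpha}{2}Q_k^{-1},\bar t_k]$ with $A_k\to\infty$.

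The third step is rescaling and passing to a limit. We set $\hat g_k(t):=Q_kg_k(\bar t_k+Q_k^{-1}t)$ for $t\in[-\alpha/2,0]$. By the propagated bound and scale invariance,
\[
    \mu(\hat g_k(t),\tau)\ge-\omega(\bar t_k+Q_k^{-1}(t+\tau))\to0
\]
for every fixed $\tau>0$. Since $\mu$ controls local $\nu$ from below, a lower bound on the $\mu$-entropy together with the local curvature bound yields local volume noncollapsing at the basepoint. Cheeger--Gromov--Taylor then gives a uniform injectivity radius bound at $\bar x_k$. By Hamilton's compactness theorem, a subsequence converges to a complete pointed flow $(M_\infty,g_\infty(t),p_\infty)$ on $[-\alpha/2,0]$ with $|\Rm|_{g_\infty(0)}(p_\infty)=1$; completeness follows because $A_k\to\infty$.

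The last step transfers the entropy bound to the limit and concludes. For fixed $\tau$, any $\varphi\in\mathscr S(M_\infty,g_\infty(0))$ pulls back under the convergence diffeomorphisms, after renormalization, to a test function on $(M_k,\hat g_k(0))$. Since $\varphi$ has compact support, the $\mathcal W$-values converge. Hence $\mathcal W(g_\infty(0),\varphi,\tau)\ge\limsup_k\mu(\hat g_k(0),\tau)\ge0$, and therefore $\mu(g_\infty(0),\tau)\ge0$ for every $\tau>0$. Lemma~\ref{lem:entropy-rigidity} then shows that $g_\infty(0)$ is flat, contradicting $|\Rm|_{g_\infty(0)}(p_\infty)=1$.
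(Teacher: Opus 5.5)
Your proposal follows essentially the same route as the paper. You argue by contradiction with $t_k\to0^+$, propagate the initial entropy bound by Corollary~\ref{cor initial entropy propagation}, take a maximum of $t|\Rm|$ times a decreasing distance weight (the paper uses $[1-d]_+^2$) combined with the shrinking balls lemma, obtain noncollapsing from Wang's volume estimate plus Cheeger--Gromov--Taylor, pass to a limit by Hamilton compactness, and conclude from $\mu(g_\infty(0),\tau)\ge0$ and Cheng's rigidity. The differences are cosmetic: your ``jump'' iteration is unnecessary, because the weighted quantity attains a true maximum (the moving unit balls over $[0,t_k]$ lie in a fixed compact set) and maximality directly gives the curvature bounds; and your pushforward of compactly supported test functions, which gives $\mu(g_\infty(0),\tau)\ge\limsup_k\mu(\hat g_k(0),\tau)$, replaces the paper's appeal to domain monotonicity and Wang's Corollary~2.5.
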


\begin{proof}
    Suppose the theorem is false.
    Then there exist times $t_i\to0^+$, complete flows
            $(M_i,\tilde g_i(t))$ on $[0,t_i]$ whose initial metrics satisfy
            \eqref{eq:entropy-initial} with the same $K_*$, $s_*$ and
            $\omega$, and points $p_i\in M_i$ such that
    \[
        Q_i:=|\Rm|_{\tilde g_i(t_i)}(p_i)>\frac{\alpha}{t_i}\to+\infty.
    \]

    Write $d_i(x,t):=d_{\tilde g_i(t)}(p_i,x)$. We use a variant of the weighted point-picking argument in \cite{Wang20arXiv}. Consider
    \[
        q_i(x,t):=t|\Rm|_{\tilde g_i(t)}(x)[1-d_i(x,t)]_+^2
        \quad\text{on}\quad M_i\times[0,t_i].
    \]
   For similar spatial weights, see also \cite[Lemma~4.3]{Ye07Kappa} and
            \cite[Theorem~1.6]{Simon17JGA}.
    For each fixed $i$, the moving unit balls about $p_i$ lie in a
            common compact subset of $M_i$.
    Therefore $q_i$ attains its positive maximum at a point
            $(\bar p_i,\bar t_i)$, where $d_i(\bar p_i,\bar t_i)<1$ and
            $0<\bar t_i\le t_i$.
    Set
    \[
        \bar Q_i:=|\Rm|_{\tilde g_i(\bar t_i)}(\bar p_i)\quad\text{and}\quad
        b_i:=1-d_i(\bar p_i,\bar t_i)\in(0,1].
    \]
    Then we have
    \[
        \bar t_i\bar Q_i b_i^2\ge t_iQ_i.
    \]
    Since $\bar t_i\le t_i$ and $b_i\le1$, it follows that
    \begin{equation}
        \label{eq:picked-scale}
        \bar t_i\bar Q_i\ge\alpha\quad\text{and}\quad
        b_i\bar Q_i^\frac{1}{2}\ge Q_i^\frac{1}{2}\to+\infty.
    \end{equation}
    The maximality at $(\bar p_i,\bar t_i)$ also implies
    \begin{equation}
        \label{eq:moving-region}
        |\Rm|_{\tilde g_i(s)}(y)\le32\bar Q_i
    \end{equation}
    whenever $\frac12\bar t_i\le s\le\bar t_i$ and $d_i(y,s)\le1-\frac14b_i$.

    Set $A_i:=\frac14b_i\bar Q_i^\frac{1}{2}\to+\infty$.
    We claim that the bound \eqref{eq:moving-region} holds on
    \[
        B_{\tilde g_i(\bar t_i)}\left(\bar p_i,A_i\bar Q_i^{-\frac{1}{2}}\right)
        \times\left[\bar t_i-\frac{\alpha}{4}\bar Q_i^{-1},\bar t_i\right],
    \]
    for all sufficiently large $i$.
    Indeed, by \eqref{eq:picked-scale}, we have
            $\bar t_i-\frac{\alpha}{4}\bar Q_i^{-1}\ge\frac34\bar t_i$.
    Fix $s\in[\bar t_i-\frac{\alpha}{4}\bar Q_i^{-1},\bar t_i)$.
    Then for all $t\in(s,\bar t_i]$, it holds that
    \[
        |\Rm|_{\tilde g_i(t)}\le32\bar Q_i\le\frac{32\bar Q_i(\bar t_i-s)}{t-s}
        \quad\text{on}\quad
        B_{\tilde g_i(t)}\left(p_i,1-\frac{b_i}{4}\right).
    \]
    By the shrinking balls lemma \cite[Corollary~3.3]{ST22JDG}, there exists
            a constant $C(n)>0$ such that
    \[
        B_{\tilde g_i(\bar t_i)}
            \left(p_i,1-\frac{b_i}{4}
                -C\bar Q_i^\frac{1}{2}(\bar t_i-s)\right)
        \subset
        B_{\tilde g_i(s)}\left(p_i,1-\frac{b_i}{4}\right).
    \]
    Since $\bar t_i-s\le\frac{\alpha}{4}\bar Q_i^{-1}$ and
            $b_i\sqrt{\bar Q_i}\to+\infty$, for all sufficiently large $i$,
            we have
    \[
        1-b_i+A_i\bar Q_i^{-\frac{1}{2}}=1-\frac{3b_i}{4}
                <1-\frac{b_i}{4}-C\bar Q_i^\frac{1}{2}(\bar t_i-s).
    \]
    By the triangle inequality, we obtain
    \[
        B_{\tilde g_i(\bar t_i)}
            (\bar p_i,A_i\bar Q_i^{-1/2})
        \subset
        B_{\tilde g_i(s)}\left(p_i,1-\frac{b_i}{4}\right).
    \]
    Since $s$ is arbitrary in that interval and the case $s=\bar t_i$ is
            immediate, the claim follows.

    We then consider the rescaled flows
    \[
        g_i(t):=\bar Q_i\tilde g_i\left(\frac{t}{\bar Q_i}+\bar t_i\right),
        \qquad
        -\frac{\alpha}{4}\le t\le0.
    \]
    We have
    \begin{equation}
        \label{eq:rescaled-curvature}
        |\Rm|_{g_i(0)}(\bar p_i)=1\quad\text{and}\quad
        |\Rm|_{g_i(t)}\le32
    \end{equation}
    on $B_{g_i(0)}(\bar p_i,A_i)\times[-\frac{\alpha}{4},0]$
            for all sufficiently large $i$.

    For every fixed $\tau>0$, we have $\bar t_i+\tau/\bar Q_i\le s_*$ for all
            sufficiently large $i$.
    By Corollary~\ref{cor initial entropy propagation}
            and the initial entropy bound \eqref{eq:entropy-initial}, we obtain
    \[
        \mu(g_i(0),\tau)
        =\mu\left(\tilde g_i(\bar t_i),\frac{\tau}{\bar Q_i}\right)
        \ge\mu\left(\tilde g_i(0),\bar t_i+\frac{\tau}{\bar Q_i}\right)
        \ge-\omega\left(\bar t_i+\frac{\tau}{\bar Q_i}\right).
    \]
    It follows that
    \[
        \liminf\limits_{i\to\infty}\mu(g_i(0),\tau)\ge0.
    \]
    The estimate is uniform for all $\tau\in(0,1]$, so
    \[
        \nu(g_i(0),1)\ge-1
    \]
    for all sufficiently large $i$.

    By the curvature bound \eqref{eq:rescaled-curvature},
            domain monotonicity \cite[Proposition 2.1]{Wang18CambJ}
            and the volume estimate \cite[Theorem~3.3]{Wang18CambJ}, we obtain
            a uniform lower bound on the volume of $B_{g_i(0)}(\bar p_i,1)$.
    The injectivity radius lower bound is then obtained from
            Cheeger-Gromov-Taylor \cite{CGT82JDG}.
    Since $A_i\to+\infty$, Hamilton's compactness theorem \cite{Hamilton95AJM},
            see also \cite[Theorem~1.6]{Topping14Crelle}, implies that,
            after passing to a subsequence,
            $(M_i,g_i(t),\bar p_i)$ converges smoothly to a complete
            solution $(M_\infty,g_\infty(t),p_\infty)$ on
            $(-\frac{\alpha}{4},0]$ with bounded curvature
            and $|\Rm|_{g_\infty(0)}(p_\infty)=1$.

    By domain monotonicity and \cite[Corollary~2.5]{Wang18CambJ},
            $\mu(\Omega,g_\infty(0),\tau)\ge0$ for every smooth domain
            $\Omega\subset\subset M_\infty$ and every $\tau>0$.
    Since such domains exhaust $M_\infty$, we have
            $\mu(g_\infty(0),\tau)\ge0$ for every $\tau>0$.
    Lemma~\ref{lem:entropy-rigidity} implies that $g_\infty(0)$ is flat,
            contradicting $|\Rm|_{g_\infty(0)}(p_\infty)=1$.
    This completes the proof.
\end{proof}

\begin{proof}[Proof of Theorem~\ref{thm:main-curvature}]
    For every $\varepsilon\in(0,1)$, let $C_\varepsilon>0$ be the constant in
            Lemma~\ref{lem uniform small entropy} corresponding to $\varepsilon$
            and the given $n,K,i_0$.
    The function
    \[
        \omega_{n,K,i_0}(s):=\inf\limits_{0<\varepsilon<1}
                \left\{n(n-1)\log(1+\varepsilon)+\frac{s}{i_0^2}
                \bigl(nKi_0^2+4C_\varepsilon\bigr)\right\}
    \]
    is nonnegative and tends to zero as $s\to0^+$.
    By Lemma~\ref{lem uniform small entropy},
            $\mu(g_0,s)\ge-\omega_{n,K,i_0}(s)$ for every $s>0$.
    Since $\mathcal R_{g_0}\ge-nK$, by Theorem~\ref{thm:curvature}
            with $K_*=nK$ and $s_*=i_0^2$, we obtain
            \eqref{eq:uniform-curvature} with
    \[
        \hat T(n,\alpha,K,i_0)
        :=\delta(n,\alpha,nK,i_0^2,\omega_{n,K,i_0})>0.
    \]
    This completes the proof.
\end{proof}

\section{Strong uniqueness}
\label{sec:consequences}

In this section, we prove the strong uniqueness results stated in the
        introduction by applying Theorem~\ref{thm:main-curvature}.

\begin{proof}[Proof of Corollary~\ref{cor:uniqueness}]
    By Theorem~\ref{thm:main-curvature} with $\alpha=1$ applied to both
            flows, we have
    \[
        |\Rm|_{g(t)}+|\Rm|_{\widetilde g(t)}\le\frac2t
    \]
    on $0<t\le\min\{\hat{T}(n,1,K,i_0),T\}$.
    If $M$ is noncompact, by Lee's uniqueness theorem
            \cite[Theorem~1.1]{Lee25arXiv}, we obtain
            $\widetilde g(t)\equiv g(t)$ on
            $0<t\le\min\{\hat{T}(n,1,K,i_0),T\}$.
    If $M$ is compact, the conclusion follows from classical uniqueness
            for Ricci flow on compact manifolds \cite{Hamilton82JDG}.
\end{proof}

\begin{proof}[Proof of Corollary~\ref{cor:ricci-flat}]
    Choose $i_0>0$ with $\inj_{g_0}\ge i_0$ and compare the given flow $g(t)$
            with the stationary complete flow $g_0$.
    By Corollary~\ref{cor:uniqueness} with $K=0$, we have $g(t)\equiv g_0$ on
            $[0, \min\{\hat{T}(n,1,0,i_0),T\}]$.
    Since $\hat{T}$ is uniform, $g(t)\equiv g_0$ on $[0,T]$.
\end{proof}

\begin{proof}[Proof of Corollary~\ref{cor flat uniqueness}]
    By Corollary~\ref{cor:ricci-flat}, if $g(0)=g_{\mathrm E}$ on
            $\mathbb{R}^n$, then $g(t)\equiv g_{\mathrm E}$ throughout its
            existence time interval.
    For a general complete flat initial metric, we pull back the flow to the
            universal cover by a fixed covering map $\pi$.
    The lifted initial metric $\pi^*g_0$ is complete and flat, so the universal
            cover with this metric is isometric to Euclidean space.
    The lifted flow $\pi^*g(t)$ is also complete and hence is stationary by
            the above argument.
    Since $\pi$ is a surjective local diffeomorphism, $\pi^*g(t)=\pi^*g_0$
            implies $g(t)=g_0$ throughout its existence time interval.
    This completes the proof.
\end{proof}

\bibliographystyle{amsplain}
\bibliography{refs2}

\end{document}